\documentclass[11pt]{amsart}
\usepackage{amssymb,amsmath,amsthm}
\usepackage{amsfonts}
\usepackage{enumerate}
\usepackage{dsfont}
\usepackage{cite}
\usepackage[colorlinks, citecolor=red]{hyperref}
\usepackage{mathrsfs}
\usepackage{epsfig}
\usepackage{lscape}
\usepackage{subfigure}
\usepackage{epstopdf}
\usepackage{caption}
\usepackage{algorithm}
\usepackage{algpseudocode}
\usepackage{multirow}
\usepackage{geometry}
\usepackage{paralist}
\usepackage{enumerate}
\usepackage{url}
\usepackage{graphicx,cite}
\usepackage{longtable}
\usepackage{tikz}

\newcommand{\rank}{{\rm rank}}

\newcommand{\diag}{{\rm diag}}

\newtheorem{definition}{Definition}[section]

\newtheorem{prop}[definition]{Proposition}
\newtheorem{theorem}[definition]{Theorem}
\newtheorem{lemma}[definition]{Lemma}

\newtheorem{remark}[definition]{Remark}

\date{}

\begin{document}
\baselineskip 18pt
\bibliographystyle{plain}
\title[Efficient variance reduction for RK]{Efficient variance reduction for randomized Kaczmarz methods via residual-tracking control variates}

\author{Jiaxin Xie}
\address{LMIB of the Ministry of Education, School of Mathematical Sciences, Beihang University, Beijing, 100191, China. }
\email{xiejx@buaa.edu.cn}

\author{Hou-Duo Qi}
\address{Department of Data Science and Artificial Intelligence and Department of Applied Mathematics, The Hong Kong Polytechnic University, Hung Hom, Kowloon, Hong Kong. }
\email{houduo.qi@polyu.edu.hk}

\author{Deren Han}
\address{LMIB of the Ministry of Education, School of Mathematical Sciences, Beihang University, Beijing, 100191, China. }
\email{handr@buaa.edu.cn}

\begin{abstract}
 The randomized Kaczmarz (RK) method is a widely used and effective class of iterative algorithms for solving large-scale linear systems.
 For inconsistent systems, however, the persistent variance caused by randomization prevents the  method from converging exactly to the least-squares solution. In this paper, we introduce a residual-tracking control variate framework for reducing this variance while retaining the row-action structure and low per-iteration cost of RK. 
 The resulting variance-reduced RK method uses constant step sizes, accesses only sampled rows of the coefficient matrix, and requires \(O(m+n)\) auxiliary storage.
 We prove that the proposed method converges linearly in expectation to the unique least-norm solution for general linear systems, including inconsistent and rank-deficient cases, with a convergence factor of the same order as that of the RK method. Furthermore, we incorporate the momentum acceleration into our framework and provide theoretical guarantees for its improved convergence rate. Numerical experiments  illustrate the effectiveness of the proposed methods, particularly for ill-conditioned systems and high-accuracy computations.
\end{abstract}

\maketitle

\let\thefootnote\relax\footnotetext{Key words: randomized Kaczmarz, variance reduction, residual-tracking control variate, minimum-norm least-squares solution, inconsistent linear systems, , acceleration}

\let\thefootnote\relax\footnotetext{Mathematics subject classification (2020): 65F10, 65F20, 90C25, 15A06, 68W20}

\section{Introduction}

We consider the problem of solving the linear system
\begin{equation}
	\label{main-prob}
	Ax = b, \ \ A \in \mathbb{R}^{m \times n}, \ b \in \mathbb{R}^m,  
\end{equation}
 under various problem settings. It is well known  \cite{golub2013matrix,ben2003generalized,Du20Ran} that this system admits a unique minimum-norm solution in the least-squares sense, given by \( A^\dagger b \), where \( A^\dagger \) denotes the Moore-Penrose pseudoinverse. This solution exists regardless of the dimensions, rank, or consistency of the system. However, computing this solution directly can be computationally expensive and memory-intensive, especially when the number of rows is large. In extremely large-scale problems (e.g., \( m \geq 10^{12} \)), even storing a single column of \( A \) in random-access memory (RAM) may become prohibitive.

\emph{Row-action} methods have been proposed as practical approaches for solving large-scale linear systems. These methods access and process one or a few rows of \( A \) at a time. A prominent example of a row-action method is the randomized Kaczmarz (RK) method \cite{Str09}.  
When the linear system \eqref{main-prob} is consistent, the RK method converges linearly in expectation to the minimum-norm least-squares solution \( A^\dagger b \), provided an appropriate choice of initial point \cite{Str09,Han2022-xh,DS2021}. However, if the system is inconsistent, Needell \cite{needell2010randomized} showed that the RK method converges only to within a fixed radius (convergence horizon) of the least-squares solution.

It is worth noting that the RK method can be interpreted as a special case of stochastic gradient descent (SGD) \cite{robbins1951stochastic,Han2022-xh,needell2014stochasticMP,ma2017stochastic,zeng2023randomized,xie2025randomized}, where each update uses the gradient of a randomly selected row. Therefore, like SGD with a constant stepsize \cite{nemirovski2009robust,bottou2018optimization}, the persistent variance in the stochastic updates prevents RK from converging exactly to the least-squares solution in the inconsistent case, resulting in the observed convergence horizon.
A brief explanation of this phenomenon can be found in Section \ref{section-in-variance}.




\subsection{Our contribution}

In this paper, we propose an efficient variance reduction strategy tailored for the RK method, to compute the unique minimum-norm least-squares solution of linear systems.  The main contributions of this work are as follows.
 
 \begin{itemize}
 	\item[1.]  
 	We propose a novel residual-tracking control variate framework for the design and analysis of variance-reduced RK (VRRK) methods. The key idea is to construct an iteratively updated sequence that serves as a control variate for approximating the unknown optimal residual. Our framework requires access only to the rows of the coefficient matrix $A$ and permits the use of constant stepsizes, thereby overcoming several limitations of existing approaches. In particular, the randomized extended Kaczmarz (REK) method \cite{Du20Ran,ma2015convergence,Zou12} requires access to both the rows and columns of $A$, whereas other variance-reduced variants rely on diminishing stepsizes \cite{robbins1951stochastic,garrigos2023handbook}, which typically result in slow convergence. Moreover, the proposed method requires only $O(m+n)$ memory.
 	\item[2.] We prove that the proposed VRRK method converges linearly in expectation, with a convergence factor that explicitly captures the effects of the sampling probabilities, row norms, and nonzero singular values. We further establish that the VRRK-AS iteration can be interpreted as both a Jacobian sketching method \cite{gower2021stochastic} and a SAGA-type method \cite{defazio2014saga} applied to least-squares problems. Nevertheless, our convergence analysis is carried out directly in the original variables and establishes linear convergence to the minimum-norm least-squares solution, even when the coefficient matrix is rank deficient. 
 	\item[3.] We further introduce acceleration techniques to speed up the proposed VRRK method, inspired by existing accelerated variance-reduced frameworks \cite{allen2018katyusha,kovalev2020don,mishchenko2025distributed,pmlr-v119-li20g}.  We prove that with appropriately chosen parameters, the resulting accelerated VRRK method achieves a quadratic improvement in the convergence factor. Numerical experiments are also provided to confirm our results.
 \end{itemize}

\subsection{Related work}

\subsubsection{REK-type methods}

To address the divergence of the standard RK method for inconsistent linear systems, Zouzias and Freris \cite{Zou12} introduced a modification to the RK method and proposed the REK method. Later, Du et al. \cite{Du19} further improved the original REK framework and derived a sharper convergence factor. 
Let $a_i$ and $A_j$ denote the $i$-th row and $j$-th column of the matrix $A$, respectively, let $\|\cdot\|_2$ denote the Euclidean norm, and let $b_i$ denote the $i$-th entry of the vector $b$.
The iteration scheme of the REK method is presented as follows:
\begin{equation} \label{REK}
	\begin{aligned}
		z^{k+1} &= z^k - \frac{\langle A_{j_k}, z^k \rangle}{\|A_{j_k}\|_2^2} A_{j_k}, \\
		x^{k+1} &= x^k - \frac{\langle a_{i_k}, x^k \rangle - b_{i_k} + z^{k+1}_{i_k}}{\|a_{i_k}\|_2^2} a_{i_k},
	\end{aligned}
\end{equation}
where $i_k$ and $j_k$ are selected with probabilities proportional to $\|a_{i_k}\|_2^2$ and $\|A_{j_k}\|_2^2$, respectively.

Subsequently, a large body of work has emerged on the development and extension of the REK method, including block or deterministic variants of REK \cite{wu2021semiconvergence,zuo2025randomized,needell2013two,Du20Ran,wu2022two,popa1998extensions,popa1999characterization}, the randomized extended Gauss-Seidel (REGS) method \cite{Du19,ma2015convergence}, and the heavy-ball momentum accelerated REK method \cite{zeng2025adaptive}, among others. Although these methods converge to the minimum-norm least-squares solution, they share the same limitation as the iteration scheme \eqref{REK}, where each step requires access to both rows and columns of $A$. This significantly restricts the applicability of REK-type methods, particularly in scenarios where only row information is available. The proposed VRRK method in this paper requires only row information and achieves linear convergence, thereby eliminating this limitation of REK-type methods.

Finally, we note that other randomized methods based on coordinate descent (CD) for solving inconsistent systems include the randomized CD (RCD) method \cite{Lev10} and the randomized CGLS (RCGLS) method \cite{zeng2026randomized}. However, these methods converge to the minimum-norm least-squares solution only when the coefficient matrix $A$ has full column rank.  The proposed VRRK method works for general linear systems, regardless of the rank of $A$ or the consistency of the system.

\subsubsection{Variance-reduced methods}

SGD \cite{robbins1951stochastic} computes only one gradient per iteration, which makes it the workhorse for large-scale tasks. However, the large variance in its update directions requires diminishing step sizes to achieve exact convergence without residual bias. This drawback, however, yields a slow sublinear convergence rate \cite{moulines2011non}. 
A breakthrough in variance reduction (VR) was proposed in \cite{roux2012stochastic}, where the stochastic average gradient (SAG) method is introduced. Similar to SGD, SAG evaluates only one component gradient at each iteration, yet it leverages stored historical gradients from all components to maintain a precise approximation of the full objective gradient.
Remarkably, this approach retains the linear convergence rate of gradient descent. However, the proof of this result in \cite{schmidt2017minimizing} is notoriously challenging and requires computer-aided analysis.

Subsequently, SAGA \cite{defazio2014saga} was developed as a variant of SAG that uses control variates to construct an unbiased gradient estimator, offering comparable performance while simplifying the convergence analysis. In their general implementations, both SAG and SAGA store auxiliary gradient vectors for individual component functions, which can incur substantial memory costs when the number of components or the problem dimension is large. SVRG \cite{johnson2013accelerating} addresses this storage requirement by using a reference point and its full gradient. However, it evaluates two component gradients per iteration and recomputes the full gradient whenever the reference point is updated.

Our VRRK method also uses control variates but, unlike SVRG, does not require periodic full gradient evaluations. Instead, it updates the control variate incrementally using only the sampled rows. For least-squares problems, a structure-exploiting implementation of SAGA can likewise store scalar coefficients rather than full gradient vectors. Our residual-tracking formulation exploits this same structure by maintaining an $m$-dimensional residual tracker and an $n$-dimensional aggregate, requiring only $O(m+n)$ auxiliary memory. The main contributions of the present work therefore lie in the row-action interpretation of this formulation, its convergence analysis for minimum-norm least-squares solutions, and its accelerated extension.

More broadly, the literature on stochastic variance reduction is extensive; we refer the reader to the survey \cite{gower2020variance} for a comprehensive overview. Beyond SAGA and SVRG, representative methods include SDCA \cite{shalev2013stochastic}, S2GD \cite{konevcny2017semi}, MISO \cite{mairal2015incremental}, Finito \cite{defazio2014finito}, SARAH \cite{nguyen2017sarah}, and JacSketch \cite{gower2021stochastic}, for which linear convergence guarantees have been established under suitable smoothness and strong convexity assumptions.

\subsubsection{Momentum acceleration}

Starting with Katyusha \cite{allen2018katyusha}, a series of accelerated stochastic gradient algorithms have been developed to further improve the efficiency of classic variance-reduced methods \cite{zhou2019direct,zhou2018simple,NEURIPS2018_79a49b3e,pmlr-v80-allen-zhu18a,allen2017natasha,lan2019unified,kovalev2020don,mishchenko2025distributed,pmlr-v119-li20g}. These acceleration frameworks incorporate momentum-based update rules and refined sampling strategies to achieve improved convergence performance. We refer the reader to \cite{driggs2022accelerating} for further discussion.

In particular, the Katyusha method was originally proposed in \cite{allen2018katyusha} as an accelerated variant of SVRG and later optimized as a loopless Katyusha variant in \cite{kovalev2020don}. More recently, Li et al. \cite{pmlr-v119-li20g} developed an accelerated DIANA framework. As a classical variance reduction technique for compressed gradient descent, DIANA is compatible with typical ternary compression operators and has been widely applied in distributed optimization tasks \cite{mishchenko2025distributed}. Inspired by these effective acceleration and variance reduction strategies, we incorporate analogous momentum-based acceleration designs into our VRRK algorithm to further improve the convergence performance.

\subsection{Organization} 
 The remainder of the paper is organized as follows. Section 2 introduces the notation, arbitrary sampling, and the variance-reduction perspective. Section 3 develops the residual-tracking framework and presents the VRRK-AS method. Section 4 introduces the accelerated AVRRK-AS method and establishes its convergence guarantee. Numerical experiments are reported in Section 5, followed by concluding remarks in Section 6. The proofs of the main results are collected in the appendix.

\section{Preliminaries}
\subsection{Notations}
Throughout the paper, for any random variables $\xi$ and $\zeta$, we use $\mathbb{E}[\xi]$ and $\mathbb{E}[\xi\lvert \zeta]$ to denote the expectation of $\xi$ and the conditional expectation of $\xi$ given $\zeta$. 
For any vector $x\in\mathbb{R}^n$, we use $x_i$, $x^\top$, and $\|x\|_2$ to denote the $i$-th entry, the transpose and the Euclidean norm of $x$, respectively.
For any matrix $A\in\mathbb{R}^{m\times n}$, we use $a_{i}$, $A^\top$, $A^\dagger$, $\|A\|_2$, $\|A\|_F$, and $\mbox{Range}(A)$ to denote the $i$-th row, the transpose, the Moore-Penrose pseudoinverse, the spectral norm, the Frobenius norm, and the column space of $A$, respectively. For an integer $m\geq 1$, let $[m]:=\{1,\ldots,m\}$.
Given $\mathcal{T}\subset[m]$, the cardinality
of the set $\mathcal{T}$ is denoted by $| \mathcal{T}|$. We use  $A_{\mathcal{T},:}$ to denote the row  submatrix indexed by $\mathcal{T} $. We use  $\sigma_{\min}(A)$ to denote the smallest nonzero singular value of $A$.
Let $W$ be a $n\times n$ symmetric positive semidefinite matrix, we define $\|A\|_W=\sqrt{\operatorname{Tr}(A W A^\top)}$.

%

\subsection{Arbitrary sampling}

We analyze the performance of the proposed variance-reduced RK method under fully general data sampling strategies, known as arbitrary sampling.  
This general sampling paradigm, originally developed in \cite{richtarik2016parallel} for randomized coordinate descent algorithms, encompasses all proper random set-valued mappings $\mathcal{I}$ taking values in the collection of all subsets of $[m]$. The randomly sampled subset $\mathcal{I}$ follows an arbitrary proper probability distribution $\mathcal{D}$ defined over the $2^m$ subsets of $[m]$. In particular, a sampling is uniquely characterized by such a probability distribution over all subsets of $[m]$. A sampling is proper if each index $i \in [m]$ admits a positive sampling probability, i.e.,
$$p_i := \mathbb{P}(i\in \mathcal{I})>0, \ \ \forall  \ i\in[m].$$ 
Throughout this work, the term ``arbitrary sampling'' refers exclusively to proper samplings satisfying the above condition. We define 
$$
q_{ij}:=\mathbb{P}(i\in \mathcal{I},j\in \mathcal{I}).
$$
One can see that $q_{ii}=p_i$.
In many cases, the  $p_i$ and $q_{ij} $ admit closed-form expressions. We present two typical instances below.
\begin{itemize}
	\item[(1)] If $\mathcal{I}\sim \mathcal{D}$ is a uniformly random subset of size $s$ for a given minibatch size $s \in [m]$, then $p_i=q_{ii} = s/m$ for all $i\in[m]$, and $q_{ij}=\dfrac{s(s-1)}{m(m-1)}$ if $i\neq j$. 
	\item[(2)] Consider the following partition of $[m]$:
	$$	
	\begin{aligned}
		\mathcal{I}_i&=\left\{\varpi(k): k=(i-1)s+1,(i-1)s+2,\ldots,is\right\}, i=1, 2, \ldots, t-1,
		\\
		\mathcal{I}_t&=\left\{\varpi(k): k=(t-1)s+1,(t-1)s+2,\ldots,m\right\}, |\mathcal{I}_t|\leq s,
	\end{aligned}$$
	where $\varpi$ is a uniform random permutation on $[m]$ and $s$ is the block size. 
We first select an index $j \in [t]$ with probability proportional to $\|A_{\mathcal{I}_j,:}\|_F^2/\|A\|_F^2$. Then 
 \[
p_{i}=
\begin{cases}
	\frac{\|A_{\mathcal{I}_\ell,:}\|_F^2}{\|A\|_F^2}, & i\in  \mathcal{I}_\ell,\\
	0, & \text{ otherwise},
\end{cases} \ \ \text{and} \ \ 
q_{ij}=
	\begin{cases}
		\frac{\|A_{\mathcal{I}_\ell,:}\|_F^2}{\|A\|_F^2}, & i,j\in  \mathcal{I}_\ell,\\
		0, & \text{ otherwise}.
\end{cases}
\]
\end{itemize}
Furthermore, for any scalars $\theta_i$, with $i \in  [m]$, the following relations hold in expectation:
\begin{equation}
	\label{expectation-s}
	\begin{aligned}
		\mathbb{E}\left[ \sum_{i\in\mathcal{I}} \theta_i\right]&=\sum_{\mathcal{I}\subseteq[m]} \mathbb{P}(\mathcal{I})\left(\sum_{i\in\mathcal{I}} \theta_i\right)=\sum_{\mathcal{I}\subseteq[m]} \mathbb{P}(\mathcal{I})\left(\sum_{i\in[m]} \theta_i\mathbb{I}_{\mathcal{I}}(i)\right)\\
		&=\sum_{i\in[m]}\theta_i\sum_{\mathcal{I}\subseteq[m]} \mathbb{P}(\mathcal{I})\mathbb{I}_{\mathcal{I}}(i)=\sum_{i\in[m]}\theta_i\left(\sum_{\mathcal{I}: i\in\mathcal{I}} \mathbb{P}(\mathcal{I}) \right)=\sum_{i\in[m]} p_i\theta_i.
	\end{aligned}
\end{equation}
and
\begin{equation}
	\label{expectation-is}
	\mathbb{E}\left[ \left( \sum_{i\in\mathcal{I}} \theta_i\right)\left( \sum_{i\in\mathcal{I}} \theta_i\right)^\top\right]=\sum_{i\in[m]}\sum_{j\in[m]}q_{ij}\theta_i\theta_j.
\end{equation}

\subsection{Unbiased estimate and variance reduction }
In this subsection, we introduce two essential  concepts underlying the design and analysis of stochastic optimization algorithms, namely unbiased estimates and variance reduction.
Let $\mathcal{D}$ be an arbitrary proper probability distribution over subsets of $[m]$, with associated sampling probabilities $\{p_i\}_{i=1}^m$. We consider the finite-sum optimization problem
\begin{equation}
	\label{obj-func-o}
\min_{x} f(x) := \mathbb{E}_{\mathcal{I} \sim \mathcal{D}}\left[\sum_{i \in \mathcal{I}} f_i(x)\right] = \sum_{i=1}^m p_i f_i(x),
\end{equation}
where the equality follows from \eqref{expectation-s}. We say that a stochastic gradient $g^k$ is an \emph{unbiased estimate} of the full gradient $\nabla f(x^k)$ if
\begin{equation}
	\label{unbiased-est}
\mathbb{E}[g^k \mid x^k]=\nabla f(x^k).
\end{equation}
For instance, one can set $g^k = \sum_{i \in \mathcal{I}_k} \nabla f_i(x^k)$ with $\mathcal{I}_k \sim \mathcal{D}$.
The standard (minibatch) SGD update reads
\begin{equation}
	\label{SGD-iteration}
	x^{k+1} = x^k - \alpha_k g^k,
\end{equation}
where $\alpha_k$ denotes the stepsize. To guarantee convergence of \eqref{SGD-iteration} with constant stepsizes, the gradient estimator must possess vanishing variance near the optimal solution, i.e.,
\begin{equation}
	\label{VRP-xie}
	\mathbb{E}\left[\|g^k - \nabla f(x^k)\|^2_2\right] \to 0 \ \text{ as } \ x^k \to x^*,
\end{equation}
where the expectation is taken over all algorithmic randomness up to iteration $k$. We adopt \eqref{VRP-xie} as the defining property of \emph{variance reduction} (VR) methods.


\subsection{Inconsistency leads to variance}
 \label{section-in-variance}
 
This subsection aims to provide a brief explanation of the variance issue introduced by the inconsistency of the linear system \eqref{main-prob}.
Consider the following  least‑squares problem 
\begin{equation}\label{quar-f}  
	\min f(x) = \frac{1}{2} \|Ax - b\|_2^2   = \sum_{i=1}^m p_i f_i(x),
\end{equation}
where 
\( f_i(x) := \frac{( \langle a_{i},x\rangle - b_i)^2}{2p_i} \).  
By choosing \( p_i = \frac{\|a_{i}\|_2^2}{\|A\|_F^2} \) and \( \alpha_k = \frac{1}{\|A\|_F^2} \), the SGD iteration \eqref{SGD-iteration} reduces to the RK method
\[
x^{k+1}= x^k-\alpha_k\nabla f_{i_k}(x^k)= x^k - \frac{\langle a_{i_k,:}, x^k\rangle - b_{i_k}}{\|a_{i_k}\|_2^2} a_{i_k}.
\]
Let   \( r^* := AA^\dagger b - b \).  
Evaluating the VR property \eqref{VRP-xie} at $x^*=A^\dagger b$ gives
\[
\mathbb{E}\left[\| \nabla f_{i_k}(A^\dagger b) - \nabla f(A^\dagger b) \|_2^2\right] = \mathbb{E}\left[\| \nabla f_{i_k}(A^\dagger b) \|_2^2\right] = \sum_{i=1}^m  \|A\|^2_F (\langle a_{i}, A^\dagger b\rangle - b_i)^2= \|A\|^2_F\|r^*\|^2_2.
\]  
Then, when the system is consistent, the iterates converge to a solution that satisfies \( A A^\dagger b = b \), implying \( r^* = 0 \), and hence the variance vanishes. This enables standard SGD (and RK) to converge even with constant stepsizes.  In the inconsistent case, however,  \(r^* \neq 0\) and the variance remains strictly positive. This persistent variance prevents exact convergence and motivates the development of variance reduction techniques.

\section{Variance-reduced RK: A residual-tracking control variate approach}

In this section, we present the motivation and design of our variance-reduced RK method. To address the variance caused by inconsistency, we first introduce an idealized but impractical method, RK$^*$, which eliminates variance using the optimal residual $r^* = AA^\dagger b - b$. Based on this ideal, we develop practical approximations to progressively approach $r^*$, thereby reducing variance in realistic settings. We refer to our approach as \emph{residual-tracking control variates}.


\subsection{RK$^*$}

One easy fix that makes the RK method converge is to simply shift each projection vector by \(r^*_{i_k} \). That is, we propose the following RK$^*$ method
\begin{equation}
	\label{RK-star}
x^{k+1}
= x^k -  \left( \frac{\langle a_{i_k}, x^k\rangle - b_{i_k}}{\|a_{i_k}\|_2^2}  -\frac{r^*_{i_k}}{\|a_{i_k}\|_2^2}  \right)a_{i_k}, 
\end{equation}
where $\mathbb{P}(i_k=i)=\frac{\|a_{i}\|_2^2}{\|A\|_F^2}$.
We note that RK$^*$ is practically infeasible, as $r^*$ is unknown. Nevertheless, it provides the key insights motivating our approach, suggesting that if we can approximate $r^*$ adaptively, the desired convergence behavior may be preserved.

First, RK$^*$ uses an unbiased estimate of the full gradient. Indeed, since $A^\top r^*=0$, the gradient estimate is unbiased
$$
\mathbb{E}\left[ \left( \frac{ \langle a_{i_k}, x^k\rangle - b_{i_k}}{\|a_{i_k}\|_2^2}  -\frac{r^*_{i_k}}{\|a_{i_k}\|_2^2}  \right) a_{i_k} \right]=\frac{A^\top(Ax^k-b-r^*)}{\|A\|^2_F}=\frac{A^\top(Ax^k-b)}{\|A\|^2_F}.
$$
Moreover,  RK$^*$ satisfies the VR property \eqref{VRP-xie} as $x^k$ approaches $A^\dagger b$. Indeed,
\[
\mathbb{E}\left[\|g^k - \nabla f(x^k)\|_2^2\right] = \mathbb{E}\left[\|\nabla f_i(x^k) - \nabla f_i(A^\dagger b) - \nabla f(x^k)\|_2^2\right] \le \mathbb{E}\left[\|\nabla f_i(x^k) - \nabla f_i(A^\dagger b)\|_2^2\right],
\]
where we used the inequality $
\mathbb{E}\left[\|X - \mathbb{E}[X]\|_2^2\right] \le \mathbb{E}\left[\|X\|_2^2\right]
$ with $X = \nabla f_i(x^k) - \nabla f_i(x^*)$ and the fact that $\mathbb{E}[\nabla f_i(x^k) - \nabla f_i(x^*)] = \nabla f(x^k)$.

In fact, RK$^*$ enjoys the same convergence guarantee as the standard RK method, stated below.
\begin{theorem}\label{prop-rk-star}
	Let $\{x^k\}_{k\geq0}$ be the sequence generated by RK$^*$ \eqref{RK-star} with initial point $x^0\in\operatorname{Range}(A^\top)$. Then
	$$
	\mathbb{E}[\|x^k-A^\dagger b\|^2_2]\leq\left(1-\frac{\sigma_{\min}^2(A)}{\|A\|^2_F}\right)^k\|x^0-A^\dagger b\|^2_2.
	$$
\end{theorem}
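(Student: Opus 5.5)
The plan is to rewrite RK$^*$ as a randomized orthogonal projection applied to the error vector, and then repeat the standard Strohmer--Vershynin argument. Set $x^*=A^\dagger b$ and $e^k=x^k-x^*$.

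\textbf{Step 1: the error recursion.} Since $r^*=Ax^*-b$, we have $b_{i}+r^*_{i}=\langle a_i,x^*\rangle$ for every $i$. Hence
\[
\langle a_{i_k},x^k\rangle-b_{i_k}-r^*_{i_k}=\langle a_{i_k},e^k\rangle,
\]
and the RK$^*$ update \eqref{RK-star} becomes
\[
e^{k+1}=\Bigl(I-\frac{a_{i_k}a_{i_k}^\top}{\|a_{i_k}\|_2^2}\Bigr)e^k .
\]
This is exactly the RK error recursion for the consistent system $Ax=AA^\dagger b$.

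\textbf{Step 2: one-step contraction.} The matrix $P_i=I-a_ia_i^\top/\|a_i\|_2^2$ is an orthogonal projector, so
\[
\|e^{k+1}\|_2^2=\|e^k\|_2^2-\frac{\langle a_{i_k},e^k\rangle^2}{\|a_{i_k}\|_2^2}.
\]
Take the conditional expectation over $i_k$ with $\mathbb{P}(i_k=i)=\|a_i\|_2^2/\|A\|_F^2$. The weights cancel and give
\[
\mathbb{E}\bigl[\|e^{k+1}\|_2^2\mid x^k\bigr]=\|e^k\|_2^2-\frac{\|Ae^k\|_2^2}{\|A\|_F^2}.
\]

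\textbf{Step 3: the lower bound on $\|Ae^k\|_2$.} This is the only delicate point: we need $\|Ae^k\|_2^2\ge\sigma_{\min}^2(A)\|e^k\|_2^2$, which requires $e^k\in\operatorname{Range}(A^\top)$. We argue by induction.
\begin{itemize}
\item $x^*=A^\dagger b\in\operatorname{Range}(A^\top)$ and $x^0\in\operatorname{Range}(A^\top)$, so $e^0\in\operatorname{Range}(A^\top)$.
\item Each update changes $x^k$ by a multiple of $a_{i_k}\in\operatorname{Range}(A^\top)$, so $e^k\in\operatorname{Range}(A^\top)$ for all $k$, almost surely.
\end{itemize}
For vectors in $\operatorname{Range}(A^\top)$, the smallest nonzero singular value bounds $\|Ae\|_2$ from below (expand in the right singular vectors with nonzero singular values). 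Therefore
\[
\mathbb{E}\bigl[\|e^{k+1}\|_2^2\mid x^k\bigr]\le\Bigl(1-\frac{\sigma_{\min}^2(A)}{\|A\|_F^2}\Bigr)\|e^k\|_2^2 .
\]

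\textbf{Step 4: unrolling.} Taking full expectations via the tower property and iterating the one-step bound yields the claimed rate. No step is a real obstacle. The only points needing care are the identity $b+r^*=Ax^*$ in Step 1 and the range invariance in Step 3, which is what lets the argument cover rank-deficient $A$.
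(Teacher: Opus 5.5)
Your proof is correct and follows essentially the same route as the paper. The paper expands $\|x^{k+1}-x^*\|_2^2$ and takes the conditional expectation directly instead of first identifying the update as an orthogonal projection of the error. Both arguments then reach $\mathbb{E}[\|x^{k+1}-x^*\|_2^2\mid x^k]=\|x^k-x^*\|_2^2-\|A(x^k-x^*)\|_2^2/\|A\|_F^2$ and finish with the same $\operatorname{Range}(A^\top)$-invariance argument, the $\sigma_{\min}$ lower bound, and unrolling.
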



\subsection{Residual-tracking control variates }

Inspired by  RK$^*$, we develop a practical convergent RK-type method equipped with a dynamically updated residual sequence $\bar{r}^k$ to approximate the inaccessible optimal residual $r^*$. Unlike the fixed optimal residual adopted in RK$^*$, the sequence $\bar{r}^k$ is iteratively updated and computationally feasible. Furthermore, the constructed $\bar{r}^k$ is designed to satisfy two core properties: the unbiasedness condition \eqref{unbiased-est} and the VR property \eqref{VRP-xie}.

Recall that $\mathcal{D}$ denotes an arbitrary proper probability distribution over subsets of $[m]$ with sampling probabilities $\{p_i\}_{i=1}^m$\footnote{For first-time readers, we recommend following the subsequent arguments by setting $p_i=1/m$ and $\mathcal{I}=\{i\}$ with probability $p_i$.}. Let $v_i\in\mathbb{R}$ denote scalar auxiliary variables for $i=1,\dots,m$. We reformulate the full residual as
\[
\begin{aligned}
	r(x):=Ax-b
	=\sum_{i=1}^{m} p_i \left(\frac{r_i(x)}{p_i}e_i-\frac{v_i}{p_i} e_i+\frac{v_i}{ p_i}e_i\right)
	=\sum_{i=1}^{m} p_i\left(\frac{r_i(x)}{p_i}e_i-\frac{v_i}{ p_i} e_i\right)+ \sum_{j=1}^mv_je_j.
\end{aligned}
\]
Let us define the component-wise residual vector
\[
r_i(x,v):=\frac{r_i(x)}{p_i}e_i-\frac{v_i}{p_i} e_i, \quad i=1,\dots,m.
\]
Based on this definition, we construct an unbiased residual estimator $\sum_{i\in\mathcal{I}} r_i(x,v)+\sum_{j=1}^mv_je_j$ via sampling $\mathcal{I}\sim \mathcal{D}$. This further yields an unbiased estimator of the full gradient, which satisfies
\begin{equation}\label{gradient-ue}
		\mathbb{E} \left[\sum_{i\in\mathcal{I}} A^\top r_i(x,v)+ A^\top\sum_{j=1}^mv_je_j\right]
		=A^\top r(x)
		=\nabla f(x).
\end{equation}
We next characterize the variance of the corresponding stochastic gradient estimator
$g^k = \sum_{i\in\mathcal{I}_k}A^\top r_i(x^k,v)+A^\top \sum_{j=1}^mv_je_j$. Substituting this in \eqref{VRP-xie}  gives
\begin{equation}\label{gradient-VR}
	\begin{aligned}
		\mathbb{E}\left[\|g^k - \nabla f(x^k)\|_2^2\right]
		&= \mathbb{E} \left[ \left\|A^\top \sum_{i\in\mathcal{I}}r_i(x^k,v)+ A^\top\sum_{j=1}^mv_je_j-A^\top r(x^k) \right\|_2^2 \right]\\
		&\leq \mathbb{E} \left[ \left\|A^\top \sum_{i\in\mathcal{I}} \left( \frac{r_i(x^k)}{p_i}e_i-\frac{v_i}{p_i} e_i\right)\right \|_2^2 \right]\\
		&\leq \|A\|^2_2\mathbb{E} \left[ \left\| \sum_{i\in\mathcal{I}} \left( \frac{r_i(x^k)}{p_i}e_i-\frac{v_i}{p_i} e_i\right)\right \|_2^2 \right]\\
		&=  \|A\|^2_2\mathbb{E} \left[ \sum_{i\in\mathcal{I}} \frac{1 }{p_i^2}\left(r_i(x^k)-v_i\right)^2 \right]\\
		&=  \|A\|^2_2 \sum_{i=1}^{m}  \frac{1}{p_i} \left(r_i(x^k)-v_i\right)^2,
	\end{aligned}
\end{equation}
where the first inequality follows from  the  variance bound $
\mathbb{E}\left[\|X - \mathbb{E}[X]\|_2^2\right] \le \mathbb{E}\left[\|X\|_2^2\right]$ applied to $X = A^\top \sum_{i\in\mathcal{I}}\left(\frac{r_i(x)}{p_i}e_i-\frac{v_i}{p_i} e_i \right)$, and the last equality is a direct consequence of \eqref{expectation-s}. This bound implies that the estimator achieves variance reduction whenever $v_i$ approximates the optimal residual $r^*_i$ as $x^k\to x^*$, which justifies the interpretation of $\{v_i\}_{i=1}^m$ as control variates.

The auxiliary variables $\{v_i\}_{i=1}^m$ are flexible and can be optimized to minimize the estimation variance. The RK$^*$ method adopts the ideal choice $v_i = r^*_i$, which is theoretically valid but computationally impractical since $r^*_i$ is unknown in general. To address this issue, we adopt a feasible alternative by evaluating residuals at dynamically updated reference points. For each residual component $r_i$, we maintain a reference point $\bar{x}^i\in\mathbb{R}^d$ and set $v_i = r_i(\bar{x}^i)$, where $\bar{x}^i$ denotes the \emph{most recent iterate} at which $r_i(\cdot)$ is evaluated. 
Substituting this control variate yields the residual estimator
\[
	\sum_{i\in\mathcal{I}_k}r_{i}(x^k,v^k)+ \sum_{j=1}^m r_j(\bar{x}^j)e_j
	=\sum_{i\in\mathcal{I}_k}\left( \frac{r_{i}(x^k)}{p_{i}} e_{i} -\frac{r_{i}(\bar{x}^{i})}{p_{i}} e_{i}\right)  +  \sum_{j=1}^m r_j(\bar{x}^j)e_j. 
\]
Since each \(\bar{x}^i\) is updated whenever \(i\in\mathcal{I}_k\), the stored residual values vary across iterations. We collect the values available at the beginning of iteration $k$ into the vector $\bar r^k\in\mathbb R^m$, defined by 
\(
(\bar{r}^k)_i:=r_i(\bar{x}^i)
\), and set $v^k=\bar r^k$.
Accordingly, the residual estimator can be rewritten as
\begin{equation}
	\label{re-update}
	\sum_{i\in\mathcal{I}_k}r_i(x^k,v^k)+\sum_{j=1}^{m} (\bar{r}^{k})_j e_j =\sum_{i\in\mathcal{I}_k}\frac{1}{p_{i}}\left( r_{i}(x^k)  -(\bar{r}^{k})_i \right)e_{i} +   \sum_{j=1}^{m}  (\bar{r}^{k})_j e_j.
\end{equation}
It follows that the sequence $\{\bar{r}^k\}_{k \ge 0}$ follows the update rule:
\begin{equation}
	\label{barr-update}
(\bar{r}^{k+1})_i =
\begin{cases}
	\langle a_i, x^k \rangle - b_i, & i \in \mathcal{I}_k, \\[4pt]
	(\bar{r}^{k})_i, & \text{otherwise}.
\end{cases}
\end{equation}
We term the updated sequence $\bar{r}^{k}$ the \emph{residual-tracking control variate}. 
By construction, $\bar{r}^k$ satisfies both the unbiasedness condition \eqref{gradient-ue} and the VR property \eqref{gradient-VR} if $\bar{r}^k \to r^*$. This strategy indeed works, as the variance of $g^k$ diminishes to zero (see, e.g., Lemma \ref{lemma-2}).

\subsection{Variance-reduced  RK with arbitrary sampling}

Building upon the residual-tracking control variates $\bar{r}^k$ introduced in the previous subsection, we now present the variance-reduced RK with arbitrary sampling (VRRK-AS). 
From \eqref{re-update}, we know that the unbiased estimator $g^k$ of the full gradient $\nabla f(x^k)$ now become
$$
\begin{aligned}
	g^k &= 
	A^\top \sum_{i\in\mathcal{I}_k} \frac{1}{p_{i}}\left( r_{i}(x^k)  -(\bar{r}^{k})_i \right)e_{i}+A^\top \sum_{j=1}^{m} (\bar{r}^{k})_j e_j\\
	&= \sum_{i\in\mathcal{I}_k} \frac{1}{p_{i}}\left( \langle a_i,x^k\rangle-b_i  -(\bar{r}^{k})_i \right)a_{i}+ \sum_{j=1}^{m} (\bar{r}^{k})_j a_j
\end{aligned}
$$
We note that the term $\sum_{i=1}^m (\bar{r}^k)_i a_i$ can be computed efficiently. Indeed, let $e^k:= \sum_{i=1}^{m}(\bar{r}^{k})_ia_{i}$, then we have
$$
\begin{aligned}
	e^{k+1}&=\sum_{i=1}^{m}(\bar{r}^{k+1})_ia_{i}=\sum_{i\notin\mathcal{I}_k}(\bar{r}^{k})_ia_{i}+\sum_{i\in\mathcal{I}_k}  (\langle a_i,x^k\rangle-b_i)a_i\\
	&=e^{k}+\sum_{i\in\mathcal{I}_k} \left( \langle a_i,x^k\rangle-b_i- (\bar{r}^{k})_i\right)a_{i},
\end{aligned}
$$
where the second equality follows from the iteration scheme of $\bar{r}^{k+1}$ in \eqref{barr-update}.

	Now we are ready to formally state the VRRK-AS method in Algorithm \ref{RABK-VR-EFF}. It can be seen that the per-iteration computational cost of the Algorithm  is $O(|\mathcal{I}_k| n)$, dominated by the computation of the inner products $\langle a_i, x^k \rangle$ for $i \in \mathcal{I}_k$ and the subsequent vector updates. In terms of storage, the algorithm requires maintaining three vectors, namely the iterate $x^k \in \mathbb{R}^n$, the auxiliary vector $e^k \in \mathbb{R}^n$, and the residual estimate $\bar{r}^k \in \mathbb{R}^m$.  Thus the total storage is $O(n + m)$.

\begin{algorithm}[htpb]
	\caption{Variance-reduced  RK with arbitrary sampling (VRRK-AS)   \label{RABK-VR-EFF}}
	\begin{algorithmic}
		\Require
		$A\in \mathbb{R}^{m\times n}$, $b\in \mathbb{R}^m$, distribution $\mathcal{D}$ with sampling probabilities $\{p_i\}_{i=1}^m$, stepsize $\alpha$, $k=0$, initial points $x^0=0$, $e^0=-\sum_{i=1}^{m}b_i a_i$, and $\bar{r}^{0}=\left( -b_1,\ldots, -b_m\right)$.
		\begin{enumerate}
			\item[1:] Draw sample $\mathcal{I}_k\sim \mathcal{D}$.
			\item[2:] Compute
			$$
			g^k=e^{k}+\sum_{i\in\mathcal{I}_k} \frac{1}{p_i}\left(\langle a_i,x^k\rangle-b_i-(\bar{r}^{k})_i\right) a_{i}.
			$$
			\item[3:] Update
			$$
			\begin{aligned}
				&x^{k+1}=x^k-\alpha g^k,
					\\
					&	e^{k+1}=e^{k}+\sum_{i\in\mathcal{I}_k} \left( \langle a_i,x^k\rangle-b_i- (\bar{r}^{k})_i\right)a_{i},\\
					&(\bar{r}^{k+1})_i=\begin{cases}
						\langle a_{i},x^k\rangle-b_{i},\ &\text{if} \ i\in \mathcal{I}_k,
						\\
						(\bar{r}^{k})_i, \ &\text{otherwise}.
					\end{cases}
				\end{aligned}
				$$
				\item[4:] If the stopping rule is satisfied, stop and go to output. Otherwise, set $k=k+1$ and return to Step $1$.
			\end{enumerate}
			
			\Ensure
			The approximate solution.
		\end{algorithmic}
	\end{algorithm}

Next, we state the linear convergence result for Algorithm \ref{RABK-VR-EFF}. For convenience, we define
\begin{equation}
	\label{def-D}
	p:=\left( p_1,\ldots,p_m\right)^\top, \ P:=\diag\left( p\right),\  Q:=(q_{ij}), \ \text{and} \ D:=\diag\left( \frac{\|a_1\|^2_2}{p_1},\ldots,\frac{\|a_m\|^2_2}{p_m}\right).
\end{equation}
By the definitions of $p_i$ and $q_{ij}$, we know that $Q - pp^\top$ is the covariance matrix of the random vector $\sum_{i \in \mathcal{I}} e_i $. Hence $Q - pp^\top$ is positive semidefinite.

\begin{theorem}
	\label{main-thm-linear}
	For any given linear system $Ax = b$, let $\{x^k\}_{k\geq 0}$ and $\{\bar{r}^k\}_{k\geq 0}$ be the iteration sequences generated by Algorithm \ref{RABK-VR-EFF}. Let $r^*=A A^\dagger b- b$, $\beta \in\left(0,\frac{1}{\lambda_{\max}(D)}\right]$ be any constant,  and define
	$
	\Phi^k:=\|x^{k}-A^\dagger b\|^2_2 +\alpha\beta   \|\bar{r}^k -  r^*\|_{DP^{-1}}^2.
	$ If the stepsize $\alpha$ satisfies
	$$
	\alpha\leq \min\left\{\frac{1-\beta\lambda_{\max}(D)}{2\lambda_{\max}(D)\lambda_{\max}(QP^{-1})}, \frac{\beta\lambda_{\min}(P)}{2\lambda_{\max}(Q-pp^\top)+\beta\sigma_{\min}^2(A)}  \right\}
	$$
	with $p$, $P$, $Q$, and $D$ being defined as \eqref{def-D}, then 
	$$
	\mathbb{E}[\Phi^k] \leq\left( 1-\alpha\sigma_{\min}^2(A)\right)^k \Phi^0.
	$$
\end{theorem}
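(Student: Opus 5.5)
We prove a one-step contraction $\mathbb{E}[\Phi^{k+1}\mid x^k,\bar r^k]\le (1-\alpha\sigma_{\min}^2(A))\Phi^k$ and then take total expectations and unroll. Write $x^*=A^\dagger b$, so that $Ax^*-b=r^*$ and $A^\top r^*=0$.

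\emph{Invariance of the row space.} First we check that $x^k\in\operatorname{Range}(A^\top)$ for all $k$. This holds because $x^0=0$ and every $g^k$ is a linear combination of rows $a_i$. Since $x^*\in\operatorname{Range}(A^\top)$ as well, we get
\[
\|A(x^k-x^*)\|_2^2\ge\sigma_{\min}^2(A)\|x^k-x^*\|_2^2 .
\]

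\emph{The $x$-part.} We write $\mathbb{E}\|x^{k+1}-x^*\|_2^2=\|x^k-x^*\|_2^2-2\alpha\langle \mathbb{E}g^k,x^k-x^*\rangle+\alpha^2\mathbb{E}\|g^k\|_2^2$. By \eqref{gradient-ue} we have $\mathbb{E}g^k=A^\top(Ax^k-b)=A^\top A(x^k-x^*)$, so the cross term equals $-2\alpha\|A(x^k-x^*)\|_2^2$.

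For the second moment we use the identity $e^k=A^\top \bar r^k=A^\top(\bar r^k-r^*)$. Set $u:=A(x^k-x^*)$ and $w:=\bar r^k-r^*$, so that $r_i(x^k)-(\bar r^k)_i=u_i-w_i$. Then
\[
g^k=\sum_{i\in\mathcal{I}_k}\frac{u_i-w_i}{p_i}a_i+A^\top w .
\]
Split $g^k$ as $\sum_{i\in\mathcal{I}_k}\frac{u_i}{p_i}a_i$ plus the mean-zero part $-\big(\sum_{i\in\mathcal{I}_k}\frac{w_i}{p_i}a_i-A^\top w\big)$, and apply $\|a+b\|^2\le 2\|a\|^2+2\|b\|^2$. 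By \eqref{expectation-is}, the first piece has second moment $u^\top P^{-1}(Q\circ AA^\top)P^{-1}u$. We bound this by $\lambda_{\max}(D)\lambda_{\max}(QP^{-1})\|u\|_2^2$, using Schur-product bounds: $Q\circ AA^\top\preceq$ something like $\lambda_{\max}(QP^{-1})\operatorname{diag}(\|a_i\|^2)\cdots$. Getting exactly these constants will require care, and it may be cleaner to route it through $\|\cdot\|_{DP^{-1}}$-type weights. The second, mean-zero piece has second moment $w^\top P^{-1}\big((Q-pp^\top)\circ AA^\top\big)P^{-1}w$. We bound this by $\lambda_{\max}(Q-pp^\top)$ times a weighted norm of $w$, namely $\lambda_{\max}(Q-pp^\top)\,\lambda_{\min}(P)^{-1}\|w\|^2_{DP^{-1}}$ up to the matching of weights.

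\emph{The $\bar r$-part.} From \eqref{barr-update}, $(\bar r^{k+1})_i-r^*_i$ equals $u_i$ with probability $p_i$ and $w_i$ otherwise. Hence
\[
\mathbb{E}\|\bar r^{k+1}-r^*\|^2_{DP^{-1}}=\sum_i \frac{\|a_i\|^2}{p_i^2}\big(p_iu_i^2+(1-p_i)w_i^2\big)=\|u\|_D^2+\|w\|_{DP^{-1}}^2-\|w\|_D^2 .
\]
Here $\|u\|_D^2\le\lambda_{\max}(D)\|u\|^2$ (interpreting the diagonal weights appropriately), and the term $-\|w\|_D^2\le-\lambda_{\min}(P)\|w\|^2_{DP^{-1}}$ provides the contraction of the tracker.

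\emph{Combining.} Multiplying the $\bar r$-part by $\alpha\beta$ and adding it to the $x$-part, we collect the coefficient of $\|u\|^2$:
\[
-2\alpha+2\alpha^2\lambda_{\max}(D)\lambda_{\max}(QP^{-1})+\alpha\beta\lambda_{\max}(D).
\]
The first stepsize condition makes this at most $-\alpha$. Using $\|u\|^2\ge\sigma_{\min}^2(A)\|x^k-x^*\|^2$ then yields the factor $1-\alpha\sigma_{\min}^2$ on the $x$-term. The coefficient of $\|w\|^2_{DP^{-1}}$ is
\[
\alpha\beta\big(1-\lambda_{\min}(P)\big)+2\alpha^2\lambda_{\max}(Q-pp^\top)\cdot(\text{weight}),
\]
and the second stepsize condition, $\alpha(2\lambda_{\max}(Q-pp^\top)+\beta\sigma^2_{\min})\le\beta\lambda_{\min}(P)$, is exactly what makes this at most $\alpha\beta(1-\alpha\sigma^2_{\min})$.

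The main obstacle is the variance step: bounding the two quadratic forms involving $Q\circ AA^\top$ in terms of exactly $\lambda_{\max}(D)\lambda_{\max}(QP^{-1})$ and $\lambda_{\max}(Q-pp^\top)$ with the right weights. The first thing to try is to write $\sum_{i\in\mathcal I}\frac{u_i}{p_i}a_i=A^\top S P^{-1}u$ with $S=\operatorname{diag}(\mathbb 1_{\mathcal I})$ and to use $\|A^\top y\|^2$ bounds together with $\mathbb{E}[SP^{-1}uu^\top P^{-1}S]=P^{-1}(Q\circ uu^\top)P^{-1}$. The constants will then be adjusted so that they match the weights in $\Phi^k$.
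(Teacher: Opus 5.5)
\textbf{Gap: the variance step.} Your skeleton is the paper's proof. The paper phrases it through $J^k=A^\top\operatorname{diag}(\bar r^k)P^{-1}$ and $G(x)=A^\top\operatorname{diag}(Ax-b)P^{-1}$, so that $\|J^k-G(x^*)\|_F^2=\|w\|^2_{DP^{-1}}$. In that language your split of $g^k$ is Lemma~\ref{lemma-2}, your tracker identity is Lemma~\ref{lemma-1}, and your ``Combining'' bookkeeping is the paper's final step.

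The step that fixes every constant is the variance step, and you only assert it. The one bound you write out for the mean-zero piece is too weak. With $\lambda_{\max}(Q-pp^\top)\lambda_{\min}(P)^{-1}\|w\|^2_{DP^{-1}}$, the coefficient of $\|w\|^2_{DP^{-1}}$ becomes $\alpha\beta(1-\lambda_{\min}(P))+2\alpha^2\lambda_{\max}(Q-pp^\top)/\lambda_{\min}(P)$. The theorem's second stepsize condition does not push this below $\alpha\beta(1-\alpha\sigma_{\min}^2(A))$. You would instead need $\alpha\bigl(2\lambda_{\max}(Q-pp^\top)/\lambda_{\min}(P)+\beta\sigma_{\min}^2(A)\bigr)\le\beta\lambda_{\min}(P)$. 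Your combining step tacitly uses weight $1$, so as written the two halves of your argument do not match.

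\textbf{The fix.} Let $M:=A^\top\operatorname{diag}(w)P^{-1}$, whose columns are $\frac{w_i}{p_i}a_i$. Then $\|M\|_F^2=\sum_i\frac{\|a_i\|_2^2}{p_i^2}w_i^2=\|w\|^2_{DP^{-1}}$. Your quadratic form $w^\top P^{-1}\bigl((Q-pp^\top)\circ AA^\top\bigr)P^{-1}w$ equals $\operatorname{Tr}\bigl(M(Q-pp^\top)M^\top\bigr)$. Hence it is at most $\lambda_{\max}(Q-pp^\top)\|M\|_F^2=\lambda_{\max}(Q-pp^\top)\|w\|^2_{DP^{-1}}$, with no factor $\lambda_{\min}(P)^{-1}$. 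In Schur-product form this is $(Q-pp^\top)\circ AA^\top\preceq\lambda_{\max}(Q-pp^\top)\operatorname{diag}(\|a_i\|_2^2)$, from the Schur product theorem applied to $\lambda_{\max}(Q-pp^\top)I-(Q-pp^\top)\succeq 0$ and $AA^\top\succeq0$.

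For the other piece, let $N:=A^\top\operatorname{diag}(u)P^{-1}$. Since $QP^{-1}$ is similar to $P^{-1/2}QP^{-1/2}$, you have $Q\preceq\lambda_{\max}(QP^{-1})P$. Therefore $\operatorname{Tr}(NQN^\top)\le\lambda_{\max}(QP^{-1})\operatorname{Tr}(NPN^\top)=\lambda_{\max}(QP^{-1})\|u\|_D^2\le\lambda_{\max}(QP^{-1})\lambda_{\max}(D)\|u\|_2^2$. Equivalently, $Q\circ AA^\top\preceq\lambda_{\max}(QP^{-1})\operatorname{diag}(p_i\|a_i\|_2^2)$, not $\operatorname{diag}(\|a_i\|_2^2)$ as you guessed.

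With these two bounds inserted, your combining step goes through verbatim under exactly the stated stepsize conditions.
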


\begin{remark}\label{factor-comp}
	Consider the probability sampling rule adopted in standard RK methods, where $p_i = \frac{\|a_{i}\|_2^2}{\|A\|_F^2}$. 
	Indeed, under this setting we have $Q = P$, $D = \|A\|_F^2 I$, $\lambda_{\max}(D) = \|A\|_F^2$, $\lambda_{\min}(P) = \frac{\min_i \|a_i\|_2^2}{\|A\|_F^2}$, and $\lambda_{\max}(Q - pp^\top) \le \frac{\max_i \|a_i\|_2^2}{\|A\|_F^2}$. 
	Substituting these quantities into Theorem \ref{main-thm-linear}, we find that the choices 
	$
	\beta = \frac{1 - 2c}{\|A\|_F^2} \quad \text{and} \quad \alpha = \frac{c}{\|A\|_F^2}
	$
	satisfy the conditions of the theorem, where 
	$
	c = \frac{\min_i \|a_i\|_2^2}{2 \max_i \|a_i\|_2^2+2 \min_i \|a_i\|_2^2 + \sigma_{\min}^2(A)}.
	$
	Under this choice, the convergence factor of Algorithm \ref{RABK-VR-EFF} in Theorem \ref{main-thm-linear} becomes 
	\[
	1 - c \cdot \frac{\sigma_{\min}^2(A)}{\|A\|_F^2},
	\]
	which is of the same order as the convergence factor of the classical RK method.
\end{remark}

%

\subsection{Connection with the Jacobian sketching and SAGA}
\label{section-34}
Gower et al. \cite{gower2021stochastic} proposed a novel family of variance-reduced SGD methods, called JacSketch, which constructs accurate gradient estimators via a Jacobian sketching technique. 
For the unconstrained optimization problem \eqref{obj-func-o} and the least-squares problem \eqref{quar-f}, we define the objective vector and Jacobian matrix as
\[
F(x):=(f_1(x),\ldots,f_m(x))^{\top}, \qquad 
G(x):=[\nabla f_1(x),\ldots, \nabla f_m(x)].
\]
Starting from an arbitrary initialization $J^0 \in \mathbb{R}^{n \times m}$, JacSketch iteratively maintains a sequence $\{ {J}^k\}_{k\geq 0}$ that approximates the true Jacobian $G(x^k)$. At each iteration, the estimator is updated via a sketch-and-project rule \cite{Gow15}:
\[
J^{k+1} = \arg \min_{ {J} \in \mathbb{R}^{n \times m}} \|  {J} -  {J}^k \|^2_F 
\quad \text{s.t.} \quad 
J  {S}_k =  {G}(x^k)  {S}_k,
\]
where the random sketching matrix $ {S}_k$ is independently sampled from a predefined distribution $\mathcal{D}$. This update yields the closest matrix to $ {J}^k$ that matches the true Jacobian action over the sketched subspace spanned by $ {S}_k$.

In particular, if $ {S}_k =\sum_{i\in\mathcal{I}_k}e_{i} e_{i}^\top$, then
$
 {J}^{k+1}= {J}^k+\big( {G}(x^k)- {J}^k\big)\sum_{i\in\mathcal{I}_k}e_{i} e_{i}^\top.
$
Note that
$ {F}(x)^\top p=f(x)$ and $  {G}(x) p=\nabla f(x)$. To ensure that \(\tilde{g}^k\) is an unbiased estimator of the full gradient \(\nabla f(x^k)\), we set
$
\tilde{g}^k =  {J}^k p + ( {G}(x^k) -  {J}^k) \sum_{i \in \mathcal{I}_k} e_i.
$

Let \(x^0 = 0\) and \( {J}^0 =  {G}(x^0) = A^\top \operatorname{diag}(\bar{r}^0) P^{-1}\). Then JacSketch follows the iterative scheme:
\begin{equation}
	\label{JacSketch}
	\begin{cases}
		\begin{aligned}
			&\tilde{g}^k= {J}^k p+\big( {G}(x^k)- {J}^k\big)\sum_{i\in\mathcal{I}_k} e_{i},\\
			&x^{k+1}=x^k-\alpha \tilde{g}^k,\\
			& {J}^{k+1}= {J}^k+\big( {G}(x^k)- {J}^k\big)\sum_{i\in\mathcal{I}_k}e_{i} e_{i}^\top.
		\end{aligned}
	\end{cases}
\end{equation}

We have the following equivalence result.

\begin{prop}\label{THM-EQ}
	If Algorithm \ref{RABK-VR-EFF} and the JacSketch update \eqref{JacSketch} for \eqref{quar-f} use identical sampling indices $\{\mathcal{I}_k\}_{k\ge 0}$ and stepsize $\alpha$, their iterate sequences $\{x^k\}_{k\ge 0}$ coincide exactly. Therefore, Theorem~\ref{main-thm-linear} provides stronger convergence guarantees for JacSketch applied to \eqref{quar-f} than previously available \cite[Theorem 1]{gower2021stochastic}.
\end{prop}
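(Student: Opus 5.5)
The plan is an induction on $k$ showing that JacSketch's Jacobian estimate is exactly a column-rescaling of the residual tracker: with $\{x^k\}$ common to both methods,
\[
J^k = A^\top \operatorname{diag}(\bar r^k) P^{-1}, \qquad J^k p = e^k .
\]
If this holds, then $\tilde g^k = g^k$ at every step. The update $x^{k+1}=x^k-\alpha(\cdot)$ is the same in both methods, so the iterates agree for all $k$.

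First I record the Jacobian of \eqref{quar-f}. Since $f_i(x)=(\langle a_i,x\rangle-b_i)^2/(2p_i)$, we have $\nabla f_i(x) = \frac{1}{p_i}(\langle a_i,x\rangle-b_i)a_i$. Hence
\[
G(x)=A^\top\operatorname{diag}(Ax-b)P^{-1}.
\]
The base case is then immediate. With $x^0=0$ we get $G(0)=A^\top\operatorname{diag}(-b)P^{-1}=A^\top\operatorname{diag}(\bar r^0)P^{-1}$, which is the stated $J^0$. Moreover $J^0p=A^\top\bar r^0=-\sum_i b_ia_i=e^0$.

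For the inductive step, assume both methods share $x^k$ and that $J^k=A^\top\operatorname{diag}(\bar r^k)P^{-1}$.
\begin{itemize}
\item \textbf{Jacobian update.} Right-multiplying by $S_k=\sum_{i\in\mathcal I_k}e_ie_i^\top$ affects only the columns indexed by $\mathcal I_k$. The update therefore replaces column $i\in\mathcal I_k$ of $J^k$ by column $i$ of $G(x^k)$, namely $\frac1{p_i}(\langle a_i,x^k\rangle-b_i)a_i$, and leaves every other column unchanged. Comparing with \eqref{barr-update}, this is exactly $A^\top\operatorname{diag}(\bar r^{k+1})P^{-1}$.
\item \textbf{Aggregate vector.} The identity $J^{k+1}p=A^\top\bar r^{k+1}=\sum_i(\bar r^{k+1})_ia_i=e^{k+1}$ holds by the derivation of $e^{k+1}$ given before Algorithm~\ref{RABK-VR-EFF}.
\item \textbf{Gradient estimator.} We have
\[
(G(x^k)-J^k)\sum_{i\in\mathcal I_k}e_i=\sum_{i\in\mathcal I_k}\frac1{p_i}\bigl(\langle a_i,x^k\rangle-b_i-(\bar r^k)_i\bigr)a_i .
\]
Adding $J^kp=e^k$ gives $\tilde g^k=g^k$ from Step~2, so $x^{k+1}$ coincides as well.
\end{itemize}

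There is no real analytic obstacle here. The only care needed is the column bookkeeping: checking that $J S_k$ selects columns, so the Jacobian update is a column replacement, and keeping the $1/p_i$ scaling consistent between $G$, $J$ and $\bar r$.

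The second sentence of the proposition then follows by applying Theorem~\ref{main-thm-linear} to the common iterates. It gives linear convergence of $\mathbb E\|x^k-A^\dagger b\|_2^2$ to the minimum-norm solution with no rank or consistency assumption on $A$, at rate $1-\alpha\sigma_{\min}^2(A)$, which involves only the smallest nonzero singular value. This is to be contrasted with \cite[Theorem 1]{gower2021stochastic}, which needs strong convexity of $f$ (equivalently, $A$ of full column rank) to obtain a linear rate. I expect this comparison to be a remark rather than a proof step.
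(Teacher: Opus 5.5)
Your proposal is correct and is essentially the paper's proof. The paper also inducts on $J^k=A^\top\operatorname{diag}(\bar r^k)P^{-1}$, written there as $J^k=A^\top\Lambda_k$ with $\Lambda_{k+1}=\Lambda_k+(D_k-\Lambda_k)\sum_{i\in\mathcal{I}_k}e_ie_i^\top$ (your column-replacement observation), and then concludes $J^kp=A^\top\bar r^k=e^k$ and $\tilde g^k=g^k$, treating the comparison with the original JacSketch theorem only as a remark, as you do.
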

The above proposition bridges our algorithm and JacSketch. As SAGA is a special case of JacSketch \cite[Section 1.4]{gower2021stochastic}, our method is inherently related to SAGA as well. Despite this connection, our convergence analysis explicitly allows rank-deficient matrices, for which the least-squares problem \eqref{quar-f} fails to be strongly convex. We prove linear convergence of our algorithm for arbitrary matrix $A$. Proposition~\ref{THM-EQ} shows that VRRK-AS is a residual-coordinate
representation of the corresponding JacSketch iteration for
least-squares problems, rather than an unrelated stochastic method.
Although a rank-deficient least-squares objective is not strongly
convex on the entire space, it is strongly convex when restricted
to \(\operatorname{Range}(A^\top)\).
Our analysis works directly in the original variables and explicitly
tracks the residual-estimation error and the minimum-norm solution
selected by the initialization. This viewpoint makes the dependence on the sampling distribution
and the matrix structure explicit within the row-action framework.


\section{Accelerating VRRK}

In this section, we develop an accelerated algorithm for solving the inconsistent linear system \eqref{main-prob}. Inspired by the success of existing accelerated variance-reduced optimization frameworks \cite{allen2018katyusha,kovalev2020don,mishchenko2025distributed,pmlr-v119-li20g}, we incorporate effective acceleration mechanisms into the original VRRK-AS method to further improve its convergence performance. The resulting novel AVRRK-AS method, outlined in Algorithm \ref{compression-RABK-EF}, integrates momentum-based acceleration and variance reduction, and serves as an accelerated extension of Algorithm \ref{RABK-VR-EFF}.

			\begin{algorithm}[htpb]
		\caption{Accelerating VRRK-AS (AVRRK-AS)  \label{compression-RABK-EF}}
		\begin{algorithmic}
			\Require
			$A\in \mathbb{R}^{m\times n}$, $b\in \mathbb{R}^m$, distribution $\mathcal{D}$ with sampling probabilities $\{p_i\}_{i=1}^m$,  parameters $\alpha$, $\theta_1$, $\theta_2$, $\eta$,  $\beta$, and $\gamma$, $k=0$, initial points $x^0=z^0=y^0=y^{-1}=0$, $e^0=-\sum_{i=1}^{m}b_i a_i$, and $\bar{r}^{0}=\left( -b_1,\ldots, -b_m\right)$.
			\begin{enumerate}
				\item[1:] Compute $x^{k}=\theta_1 z^k+\theta_2 y^{k-1}+(1-\theta_1-\theta_2)y^k$. 
				\item[2:] Draw sample $\mathcal{I}_k\sim \mathcal{D}$.
				\item[3:] Compute
				$$
				g^k=e^{k}+\sum_{i\in\mathcal{I}_k} \frac{1}{p_i}\left(\langle a_i,x^k\rangle-b_i-(\bar{r}^{k})_i\right) a_{i}.
				$$
				\item[4:] Update
				$$
				\begin{aligned}
					&y^{k+1}=x^k-\alpha g^k,\\
					&z^{k+1}=\beta z^{k}+(1-\beta)x^k+\frac{\gamma}{\alpha} (y^{k+1}-x^k),\\
					&	e^{k+1}=e^{k}+\eta\sum_{i\in\mathcal{I}_k} \left( \langle a_i,y^{k-1}\rangle-b_i- (\bar{r}^{k})_i\right)a_{i},\\
						&(\bar{r}^{k+1})_i=\begin{cases}
						\eta(\langle a_{i},y^{k-1}\rangle-b_{i})+(1-\eta)(\bar{r}^{k})_i,\ &\text{if} \ i\in \mathcal{I}_k,
						\\
						(\bar{r}^{k})_i, \ &\text{otherwise}.
					\end{cases}
				\end{aligned}
				$$
				\item[5:] If the stopping rule is satisfied, stop and go to output. Otherwise, set $k=k+1$ and return to Step $1$.
			\end{enumerate}
			
			\Ensure
			The approximate solution.
		\end{algorithmic}
	\end{algorithm}
	
	It can be seen that AVRRK-AS achieves a per-iteration complexity of $O(|\mathcal{I}_k| n)$, which matches the VRRK-AS method.  The acceleration and residual-tracking mechanisms introduce no extra per-iteration computational cost.
	For the storage complexity,  AVRRK-AS requires storing four iterative vectors $x^k, y^k, y^{k-1}, z^k\in\mathbb{R}^d$, one auxiliary vector $e^k\in\mathbb{R}^d$, and the residual-tracking vector $\bar{r}^k\in\mathbb{R}^m$.  Compared to VRRK-AS, which stores three vectors $x^k$, $e^k$, $\bar{r}^k$, AVRRK-AS requires three additional vectors. Nevertheless, all stored variables are lightweight vector quantities, leading to a low and acceptable memory overhead for large-scale tasks.
	
	Using a similar argument as in Subsection \ref{section-34}, Algorithm \ref{compression-RABK-EF} can admit the following JacSketch formulation:
	\begin{equation}
		\label{Acc-JacSketch}
		\begin{cases}
			\begin{aligned}
				&x^{k}=\theta_1 z^k+\theta_2 y^{k-1}+(1-\theta_1-\theta_2)y^k\\
				&g^k= {J}^k p+( {G}(x^{k})- {J}^k)\sum_{i\in\mathcal{I}_k} e_{i},\\
				&	y^{k+1}=x^k-\alpha g^k,\\
				&z^{k+1}=\beta z^{k}+(1-\beta)x^k+\frac{\gamma}{\alpha} (y^{k+1}-x^k),\\
				& {J}^{k+1}= {J}^k+ \eta( {G}(y^{k-1})- {J}^k)\sum_{i\in\mathcal{I}_k}e_{i} e_{i}^\top,
			\end{aligned}
		\end{cases}
	\end{equation}
	where $x^0=z^0=y^0=y^{-1}=0$, and $ {J}^0= {G}(x^0)$.
	This representation connects the proposed acceleration mechanism
	with the JacSketch formulation of least-squares problems.
	Our focus is on combining momentum with residual tracking while
	preserving row-only access, and on deriving a convergence guarantee
	for the resulting scheme.
\subsection{Convergence analysis}

We use the following notations
$$
\mathcal{Z}^{k}:=\|z^k-A^\dagger b\|^2_2, \ \ \mathcal{Y}^{k}:=\|Ay^k-b\|^2_2-\|Ax^*-b\|^2_2,
$$
and the Lyapunov function
$$
\Psi^{k+1}:= \mathcal{Z}^{k+1} +\frac{ \gamma\beta }{\theta_1} \mathcal{Y}^{k+1}  +\frac{\gamma\beta \theta_2(1+\theta_1)}{\theta_1} \mathcal{Y}^{k}+\frac{8\alpha \gamma\lambda_{\max}(Q-pp^\top) }{\theta_1 \eta\lambda_{\min}(P)}\left\| {J}^{k+1}- {G}(y^{k  })\right\|^2_{F},
$$
where $Q$ and $p$ are defined as \eqref{def-D}.

\begin{theorem}
	\label{thm-acc}
	Let the parameters $\alpha$, $\theta_1$, $\theta_2$, $\eta$,  $\beta$, and $\gamma$ in Algorithm \ref{compression-RABK-EF} be chosen as 
	$$
	\begin{aligned}
		 \alpha&=\min\left\{\frac{(1-\eta)^2}{128\lambda_{\max}(DP^{-1}) \lambda_{\max}(Q-pp^\top)(3-\eta) }, \ \ \frac{1}{4\|A\|^2_2}\right\}, \ \  \theta_1=\min\left\{\frac{1}{4},\sqrt{\frac{\alpha\sigma_{\min}^2(A)}{2}} \right\},\\
		  \theta_2&=\frac{1}{2}, \ \ \gamma=\frac{\alpha}{2(\theta_1+\alpha\sigma_{\min}^2(A))}, \ \ \eta=\frac{1}{\lambda_{\min}(P)+1}, \ \ \beta=1-\gamma\sigma_{\min}^2(A), 
	\end{aligned}
	$$
	where the matrices $Q$, $P$, and $D$ are defined as \eqref{def-D}.
	 Then the iteration sequence generated by Algorithm \ref{compression-RABK-EF} satisfies
	$$
	\mathbb{E}\left[\Psi^{k}\right]\leq   \left(1-\min\left\{ \frac{\eta\lambda_{\min} (P)}{4},\frac{\sqrt{\alpha\sigma^2_{\min}(A)}}{4}  \right\}\right)^k\Psi^0.
	$$
\end{theorem}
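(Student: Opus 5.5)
We follow the loopless Katyusha / accelerated DIANA template of \cite{kovalev2020don,pmlr-v119-li20g}, working throughout in the JacSketch form \eqref{Acc-JacSketch}. Write $x^*=A^\dagger b$ and $f(x)=\frac12\|Ax-b\|_2^2$, so that $\mathcal{Y}^k=2(f(y^k)-f(x^*))$.

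The first step is to check, by induction, that $x^k,y^k,z^k\in\operatorname{Range}(A^\top)$. This holds because every update adds multiples of rows of $A$ and the initializations are zero. On this subspace we have the restricted strong convexity $f(u)-f(x^*)\ge\frac{\sigma_{\min}^2(A)}{2}\|u-x^*\|_2^2$. The smoothness constant is $\|A\|_2^2$, and $\alpha\le 1/(4\|A\|_2^2)$.

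The second step is the three standard one-iteration inequalities.

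(i) \emph{Descent for $y^{k+1}$.} Smoothness gives
\[
f(y^{k+1})\le f(x^k)-\tfrac{\alpha}{2}\|\nabla f(x^k)\|^2+\tfrac{\alpha}{2}\|g^k-\nabla f(x^k)\|^2 .
\]

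(ii) \emph{Mirror step for $z^{k+1}$.} Since $\frac{\gamma}{\alpha}(y^{k+1}-x^k)=-\gamma g^k$, we have $z^{k+1}=\beta z^k+(1-\beta)x^k-\gamma g^k$. Expand $\mathbb{E}\|z^{k+1}-x^*\|^2$ using unbiasedness $\mathbb{E}[g^k]=\nabla f(x^k)$. Then use convexity and strong convexity, $\langle\nabla f(x^k),x^*-x^k\rangle\le f(x^*)-f(x^k)-\frac{\sigma^2_{\min}}{2}\|x^k-x^*\|^2$, together with the choice $\beta=1-\gamma\sigma_{\min}^2$. This gives $\mathcal{Z}^{k+1}\le\beta\mathcal{Z}^k-2\gamma(f(x^k)-f^*)+\gamma^2\mathbb{E}\|g^k\|^2$ plus controllable cross terms.

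(iii) \emph{Coupling.} Use $x^k=\theta_1 z^k+\theta_2 y^{k-1}+(1-\theta_1-\theta_2)y^k$ to write
\[
\langle\nabla f(x^k),x^k-z^k\rangle=\tfrac{\theta_2}{\theta_1}\langle\nabla f(x^k),y^{k-1}-x^k\rangle+\tfrac{1-\theta_1-\theta_2}{\theta_1}\langle\nabla f(x^k),y^k-x^k\rangle,
\]
and bound each term by convexity through $f(y^{k-1})-f(x^k)$ and $f(y^k)-f(x^k)$. This is the source of the weights $\gamma\beta/\theta_1$ on $\mathcal{Y}^{k+1}$ and $\gamma\beta\theta_2(1+\theta_1)/\theta_1$ on $\mathcal{Y}^k$.

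The third step is a variance lemma of the type in Lemma \ref{lemma-2}, used in the proof of Theorem \ref{main-thm-linear}. Using \eqref{expectation-is}, we get
\[
\mathbb{E}\|g^k-\nabla f(x^k)\|^2\le \lambda_{\max}(Q-pp^\top)\,\|G(x^k)-J^k\|_F^2 ,
\]
and then split $\|G(x^k)-J^k\|_F^2\le 2\|G(x^k)-G(y^{k-1})\|^2_F+2\|G(y^{k-1})-J^k\|_F^2$. For least squares,
\[
\|G(u)-G(v)\|_F^2=\sum_i\frac{\|a_i\|^2}{p_i^2}\langle a_i,u-v\rangle^2\le\lambda_{\max}(DP^{-1})\|A(u-v)\|^2 .
\]
Since all points involved lie in $\operatorname{Range}(A^\top)$, this is bounded by a multiple of $f(\cdot)-f^*$ evaluated at $x^k$ and $y^{k-1}$. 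The Jacobian error satisfies a contraction: each column is refreshed with probability $p_i$ and weight $\eta$, so
\[
\mathbb{E}\|J^{k+1}-G(y^{k-1})\|_F^2\le(1-\eta\lambda_{\min}(P))\|J^k-G(y^{k-1})\|_F^2 .
\]
To recover the Lyapunov term $\|J^{k}-G(y^{k-1})\|_F^2$ at the previous index, we use Young's inequality with the term $\|G(y^{k-1})-G(y^{k-2})\|^2$, again bounded by $\mathcal{Y}$-type quantities. The factor $(1-\eta)^2/(3-\eta)$ in $\alpha$ is what makes the Young constants close.

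The last step is to sum (i)--(iii) with the Lyapunov weights and to verify that every leftover positive term in $f(x^k)-f^*$, $\mathcal{Y}^k$ and $\mathcal{Y}^{k-1}$ is absorbed. The contraction factors obtained are $\beta=1-\gamma\sigma^2_{\min}\le 1-\sqrt{\alpha\sigma^2_{\min}}/4$ for $\mathcal{Z}$, roughly $1-\theta_1/2$ for the $\mathcal{Y}$ terms, and $1-\eta\lambda_{\min}(P)/4$ for the Jacobian term; the last uses $\eta=1/(\lambda_{\min}(P)+1)$, which keeps $\eta\lambda_{\min}(P)\le 1/2$. Taking the minimum gives the stated rate. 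The main obstacle is this coefficient bookkeeping. Because $J$ is refreshed at the lagged point $y^{k-1}$, errors at three consecutive time indices are coupled, so the $\theta_2$-weighted $\mathcal{Y}^k$ term must absorb both the variance contribution and the coupling remainder. I would first fix $\theta_2=\frac12$ and $\theta_1\le\frac14$, then choose each Young parameter so that the coefficient of $f(x^k)-f^*$ is nonpositive, and only afterwards check the two constraints on $\alpha$.
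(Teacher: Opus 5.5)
There is a genuine gap in how you absorb the drift terms.

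In (iii) you bound the coupling inner products by convexity, $\langle\nabla f(x^k),y^{k-1}-x^k\rangle\le f(y^{k-1})-f(x^k)$. In your third step you then control $\|A(x^k-y^{k-1})\|_2^2$ and $\|A(y^k-y^{k-1})\|_2^2$ by function gaps $f(\cdot)-f(x^*)$. With the Lyapunov weights and parameters fixed by the theorem, this cannot close.

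\emph{Size of the drift.} The Jacobian term has weight $W=\frac{8\alpha\gamma\lambda_{\max}(Q-pp^\top)}{\theta_1\eta\lambda_{\min}(P)}$. Passing from $\|J^{k+1}-G(y^{k-1})\|_F^2$ to $\|J^{k+1}-G(y^{k})\|_F^2$ while preserving the $\eta\lambda_{\min}(P)/4$ contraction forces a Young factor of order $1/(\eta\lambda_{\min}(P))$. That factor multiplies $\|G(y^k)-G(y^{k-1})\|_F^2\le\rho\|A(y^k-y^{k-1})\|_2^2$, where $\rho=\lambda_{\max}(DP^{-1})$. Because $1-\eta=\eta\lambda_{\min}(P)$, the first entry of the minimum defining $\alpha$ reads $\alpha\rho\lambda_{\max}(Q-pp^\top)\le\frac{(\eta\lambda_{\min}(P))^2}{128(3-\eta)}$. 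Hence the drift enters with coefficient equal to an absolute constant times $\gamma/\theta_1$. In the paper it is exactly $\frac{\gamma}{8\theta_1}$ in front of $\|A(x^k-y^k)\|_2^2$ when that entry is active.

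\emph{Size of the slack.} The function-gap slack is smaller by a factor of order $\theta_1$. In the Katyusha bookkeeping, the coefficients of $f(x^k)-f(x^*)$ from the mirror, coupling and descent steps cancel, leaving only a negative remainder of order $\frac{\gamma}{\theta_1}\alpha\sigma_{\min}^2(A)$. The terms $\mathcal{Y}^k$ and $\mathcal{Y}^{k-1}$ receive incoming coefficients $\frac{\gamma\beta}{\theta_1}(1-\frac{\theta_1}{2})$ and $\frac{\gamma\beta\theta_2}{\theta_1}$, against weights $\frac{\gamma\beta}{\theta_1}$ and $\frac{\gamma\beta\theta_2(1+\theta_1)}{\theta_1}$. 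This leaves slack below $\frac{\gamma\beta}{2}$ and $\gamma\beta\theta_2$. No choice of Young parameters closes a gap of order $1/\theta_1$. Moreover, $\theta_1\approx\sqrt{\alpha\sigma_{\min}^2(A)/2}$ is small precisely in the ill-conditioned regime the theorem is about.

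\emph{What is missing.} You need to keep the Bregman remainder in the coupling, which for least squares is an exact identity: $\|Ay-b\|_2^2-\|Ax-b\|_2^2=2\langle y-x,A^\top(Ax-b)\rangle+\|A(y-x)\|_2^2$. The paper applies Lemma~\ref{lemma-acc-3} with reference points $x^*$, $y^k$ and $y^{k-1}$, using weights $\gamma$, $\frac{\gamma\beta(1-\theta_1-\theta_2)}{\theta_1}$ and $\frac{\gamma\beta\theta_2}{\theta_1}$. This cancels the inner products from expanding $\mathcal{Z}^{k+1}$. It also leaves $-\frac{\gamma\beta}{4\theta_1}\|A(x^k-y^k)\|_2^2-\frac{\gamma\beta}{2\theta_1}\|A(x^k-y^{k-1})\|_2^2$ in Lemma~\ref{lemma-acc-1}, and these have the required size $\gamma/\theta_1$.

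Accordingly, the drift must be split through $x^k$ rather than $x^*$: $\|G(y^k)-G(y^{k-1})\|_F^2\le2\|G(y^k)-G(x^k)\|_F^2+2\|G(x^k)-G(y^{k-1})\|_F^2$. The variance piece $\|G(x^k)-G(y^{k-1})\|_{Q-pp^\top}^2$ is treated the same way. The factor $(1-\eta)^2/(3-\eta)$ in $\alpha$ is calibrated so that the resulting positive coefficients are at most $\frac{\gamma}{8\theta_1}\le\frac{\gamma\beta}{4\theta_1}$ and $\frac{\gamma\beta}{2\theta_1}$ respectively, using $\beta\ge\frac12$. This is the same mechanism as in L-Katyusha, where the variance is charged to the Bregman divergence retained in the coupling, not to function suboptimality.

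The remaining ingredients of your outline match the paper: range invariance, the $Q-pp^\top$ variance identity, the $\lambda_{\max}(DP^{-1})$ bound, the Jacobian contraction, and the final comparison of rates.
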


\begin{remark}\label{remark-quadratic} 
	It follows from Theorem~\ref{main-thm-linear} and Theorem~\ref{thm-acc} that the VRRK-AS and AVRRK-AS methods have iteration complexities
	$$
	O\left(
	\frac{1}{\alpha\sigma_{\min}^2(A)}
	\log\frac1\varepsilon
	\right)
	  \ \text{and} \
	O\left(
	\max\left\{
	\frac{1}{\eta\lambda_{\min}(P)},
	\frac{1}{\sqrt{\alpha \sigma_{\min}^2(A)}}
	\right\}
	\log\frac1\varepsilon
	\right), 
	$$
	respectively.
	%
The appearance of $\sqrt{\alpha \sigma_{\min}^2(A)}$ instead of $\alpha \sigma_{\min}^2(A)$ indicates a quadratic improvement, making the accelerated method substantially faster for ill-conditioned problems.
\end{remark}

\begin{remark}
	If $\lambda_{\max}(Q-pp^\top)=0$, then $Q-pp^\top=0$,
	and the gradient estimator has zero conditional variance.
	In this case, choose
	$
	\alpha=\frac{1}{4\|A\|_2^2}
	$
	and retain the other parameter choices in
	Theorem~\ref{thm-acc}.
	Now the Lyapunov function reduces to
	$
	\Psi^k=
	\mathcal Z^k
	+\frac{\gamma\beta}{\theta_1}\mathcal Y^k
	+\frac{\gamma\beta\theta_2(1+\theta_1)}
	{\theta_1}\mathcal Y^{k-1}.
	$
	The same argument as in the proof of
	Theorem~\ref{thm-acc}, with the variance and memory
	terms omitted, yields
	\[
	\mathbb E[\Psi^k]
	\leq
	\left(
	1-\frac{\sqrt{\alpha\sigma_{\min}^2(A)}}{4}
	\right)^k\Psi^0.
	\] 
	Thus, the iteration complexity is \(O\bigl(\frac{1}{\sqrt{\alpha\sigma_{\min}^2(A)}}\log\frac{1}{\varepsilon}\bigr)\). Compared with the \(O\bigl(\frac{1}{\alpha\sigma_{\min}^2(A)}\log\frac{1}{\varepsilon}\bigr)\) rate of the nonaccelerated bound, this yields a quadratic improvement.
\end{remark}

\section{Numerical experiments}
\label{sec:numerical}

In this section, we investigate the numerical performance of VRRK-AS
(Algorithm~\ref{RABK-VR-EFF}) and AVRRK-AS
(Algorithm~\ref{compression-RABK-EF}) on inconsistent linear systems. We
first compare the two proposed methods to identify when acceleration
compensates for its additional per-iteration cost. We then compare them
with REK~\eqref{REK} and RCD. All algorithms are implemented in {\sc Matlab}
R2022a and run under Windows 11 on a desktop computer equipped with an
Intel Core i7-1360P processor and 32 GB of RAM. The code to reproduce our results can be found at  \href{https://github.com/xiejx-math/VRRK}{https://github.com/xiejx-math/VRRK}.

\subsection{Numerical setup}
\label{NS-Section61}

We consider synthetic matrices and real-world sparse matrices. For given
$m$, $n$, $r$, and $\kappa>1$, a synthetic matrix is generated as
$
 A=UDV^\top,
$
where $U\in\mathbb{R}^{m\times r}$ and
$V\in\mathbb{R}^{n\times r}$ have orthonormal columns obtained from thin
QR factorizations of independent standard Gaussian matrices, and
$
 D=\diag(\sigma_1,\ldots,\sigma_r)
 $ with $
 \sigma_j=\kappa^{-(j-1)/(r-1)}.
$
Thus, $\rank(A)=r$, and the ratio of its largest to smallest nonzero
singular values is $\kappa$. The real-world matrices are taken from the
SuiteSparse Matrix Collection~\cite{Kol19}. 

For the synthetic matrix, we first generate a solution
$x^*=Vc$, where $c\sim\mathcal{N}(0,I_r)$, and normalize it so that
$\|x^*\|_2=1$. We then draw $z\sim\mathcal{N}(0,I_m)$ and define
\begin{equation}
	 b=Ax^*+b_e \ \ \text{with} \ \ b_e=
	 \delta\frac{\|Ax^*\|_2}{\|\title{b}\|_2}\title{b} \ \ \text{and} \ \  \title{b}=(I-UU^\top)z.
 \label{eq:inconsistent-rhs}
\end{equation}
Here $\delta>0$ is the relative inconsistency level. Since
$A^\top b_e=0$, we have $A^\dagger b=x^*$ and $r^*=AA^\dagger b-b=-b_e$. In addition to this
\emph{random} solution, we consider the more challenging
\emph{tail-aligned} solution
\begin{equation}
 x^*=\frac{1}{\sqrt q}\sum_{j=r-q+1}^{r}v_j, 
 \label{eq:tail-aligned-solution}
\end{equation}
where $v_j$ is the $j$-th right singular vector of $A$ and $q$ is a positive integer. This construction
concentrates the initial error in the subspace associated with the
smallest positive singular values and therefore emphasizes the slow
spectral components for which acceleration is expected to be most useful. 
For a SuiteSparse matrix, we draw and normalize
$x^*\in\mathbb{R}^n$, generate $z\sim\mathcal{N}(0,I_m)$, and compute
$
 b_e=z-A A^\dagger z.
$
After scaling $b_e$ as in \eqref{eq:inconsistent-rhs}, we set
$b=Ax^*+b_e$. This also gives $A^\top b_e=0$.

All methods are initialized with \(x^0=0\). For VRRK-AS and AVRRK-AS, we set
$
\bar r^0=-b$ and $
e^0=-\sum_{i=1}^{m}b_i a_i,
$
whereas REK is initialized with \(z^0=b\). All parameters are tuned on problem instances generated independently of the final test set. For VRRK-AS, we parameterize the stepsize as
$
\alpha=\frac{c_\alpha}{L_{\mathcal D}}
$ with $
L_{\mathcal D}=\max_i\frac{\|a_i\|_2^2}{p_i},
$
and tune $c_\alpha$ over a logarithmically spaced grid. For AVRRK-AS, we first tune $(c_\alpha,\eta)$, followed by the momentum multipliers $(c_\theta,c_\gamma)$, and finally $\theta_2$. We employ successive halving with progressively increasing computational budgets to eliminate poorly performing configurations. Each selected configuration is further subjected to a long-horizon stability test. The resulting parameters are then fixed and used in all independent test trials.

During our  experiments, both VRRK-AS and AVRRK-AS employ uniform sampling with a mini-batch size of one. REK uses the standard row- and column-norm sampling schemes, whereas RCD uses column-norm sampling.
We measure accuracy by the relative solution error
$
\operatorname{RSE}
=
\frac{\|x^k-A^\dagger b\|_2^2}
{\|x^0-A^\dagger b\|_2^2}$. We set $\delta=10^{-1}$, $q=5$, and each experiment is repeated over $20$ independent trials.

\subsection{Comparison between VRRK-AS and AVRRK-AS}
\label{subsec:vrrk-avrrk-comparison}

In this subsection, we isolate the effect of acceleration on synthetic inconsistent systems with $m=2000$ and $n=r=200$.  Figures~\ref{fig:random-vrrk-avrrk} and \ref{fig:tail-vrrk-avrrk} show a clear transition with respect to the tested values of $\kappa$.  The bold line illustrates the median value over $20$ trials. The lightly shaded area signifies the  range from the minimum to the maximum values, while the darker shaded one indicates the data lying between the $25$-th and $75$-th quantiles.  At the smaller tested condition numbers,
VRRK-AS is faster in both epochs and CPU time. In this regime, its
variance-reduced iteration already contracts rapidly, and the reduction
in iteration count obtained from momentum is insufficient to offset the
additional recurrences  of AVRRK-AS.
Acceleration therefore does not improve practical performance uniformly.


\begin{figure}[htbp]
 \centering
 \begin{tabular}{@{}ccc@{}}
  \includegraphics[width=0.32\linewidth]{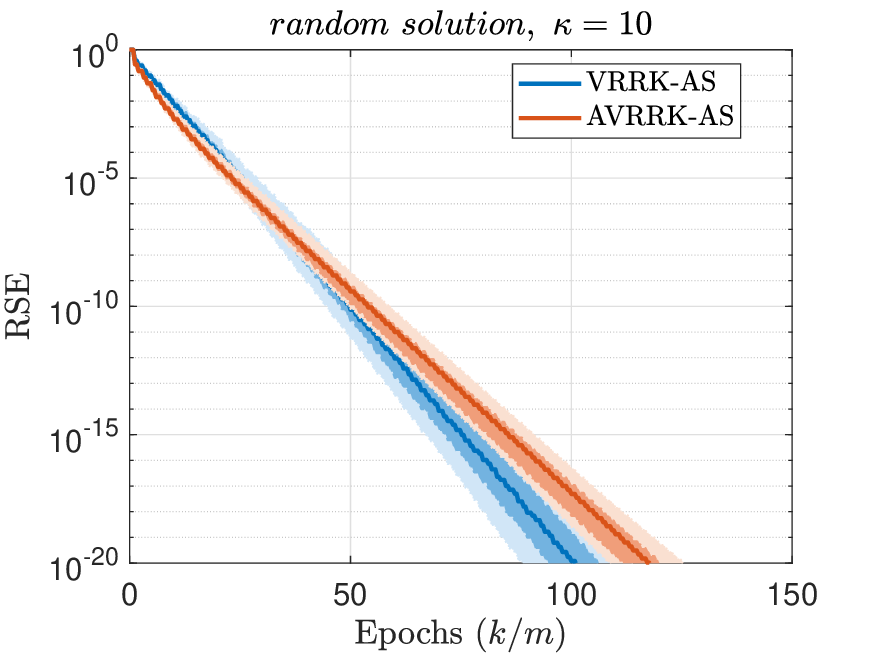}&
  \includegraphics[width=0.32\linewidth]{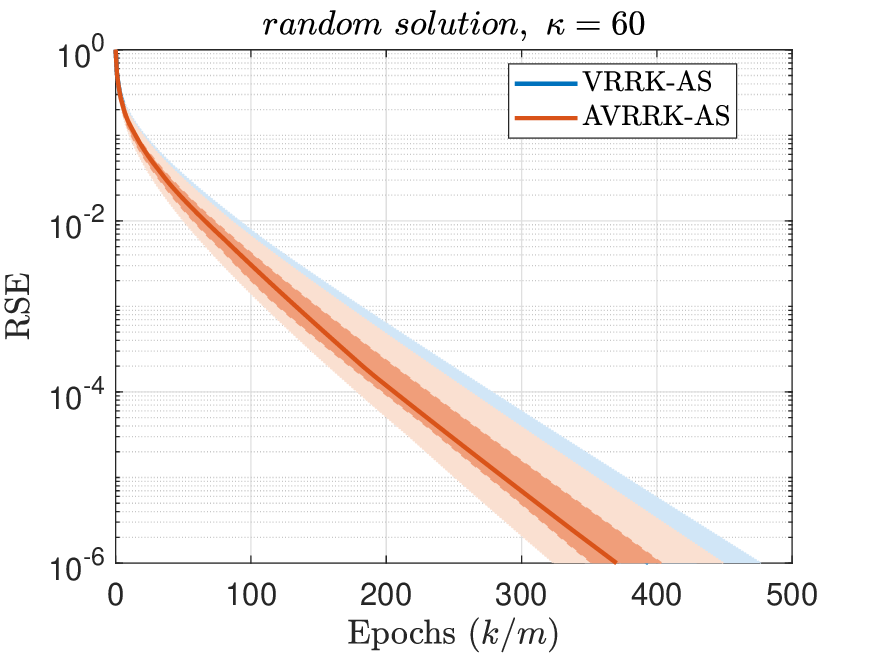}&
  \includegraphics[width=0.32\linewidth]{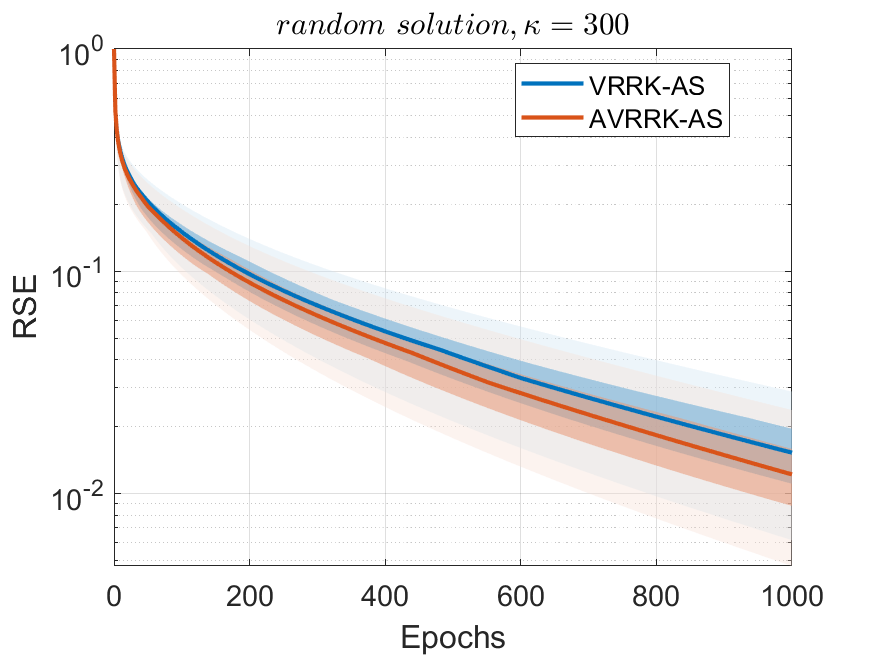}\\
  \includegraphics[width=0.32\linewidth]{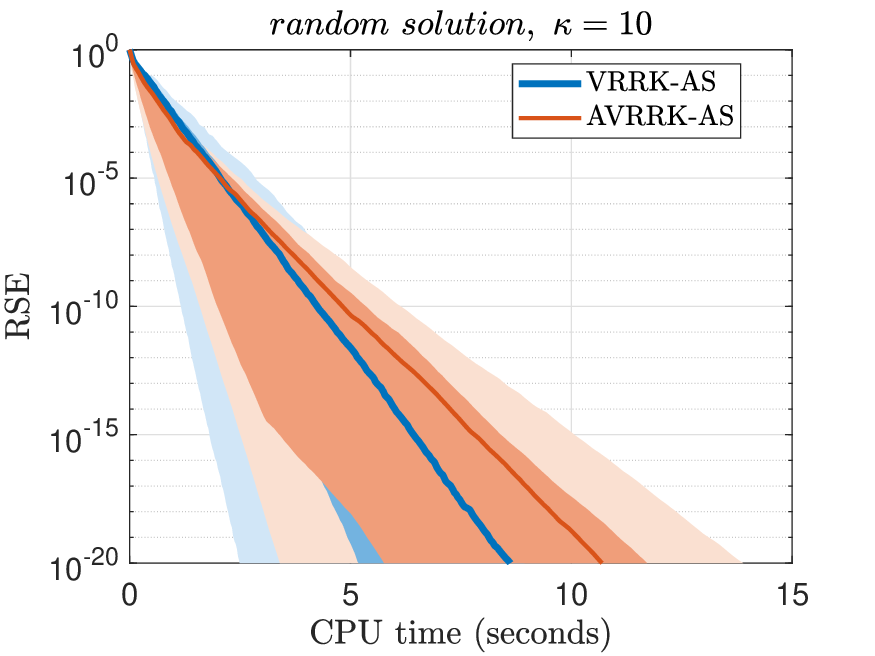}&
  \includegraphics[width=0.32\linewidth]{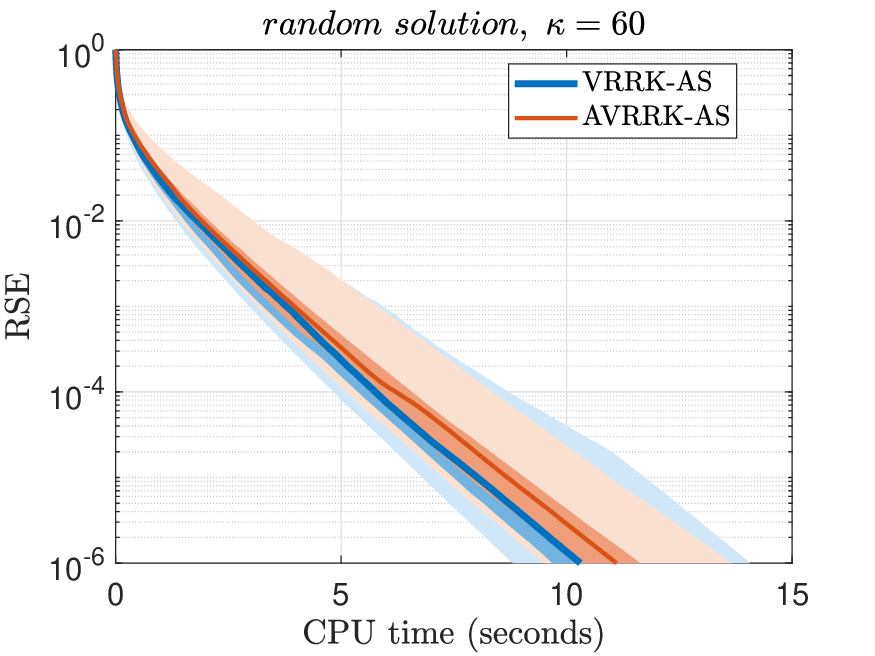}&
  \includegraphics[width=0.32\linewidth]{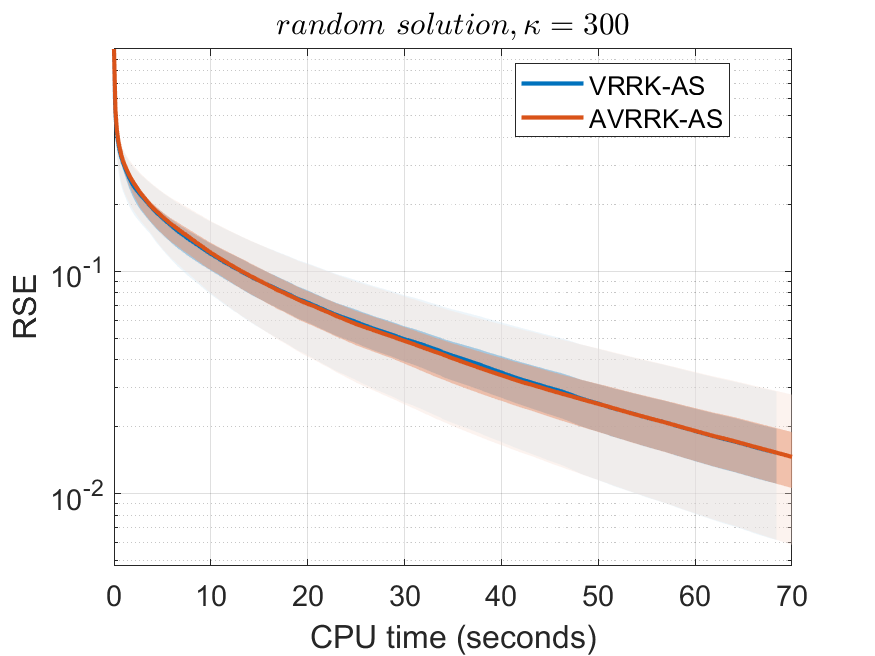}
 \end{tabular}
 \caption{Comparison of VRRK-AS and AVRRK-AS for random solutions with
 $\kappa=10$, $60$, and $300$ (from left to right). The top row shows RSE
 versus epochs, and the bottom row shows RSE versus CPU time.}
 \label{fig:random-vrrk-avrrk}
\end{figure}

\begin{figure}[htbp]
 \centering
 \begin{tabular}{@{}ccc@{}}
  \includegraphics[width=0.32\linewidth]{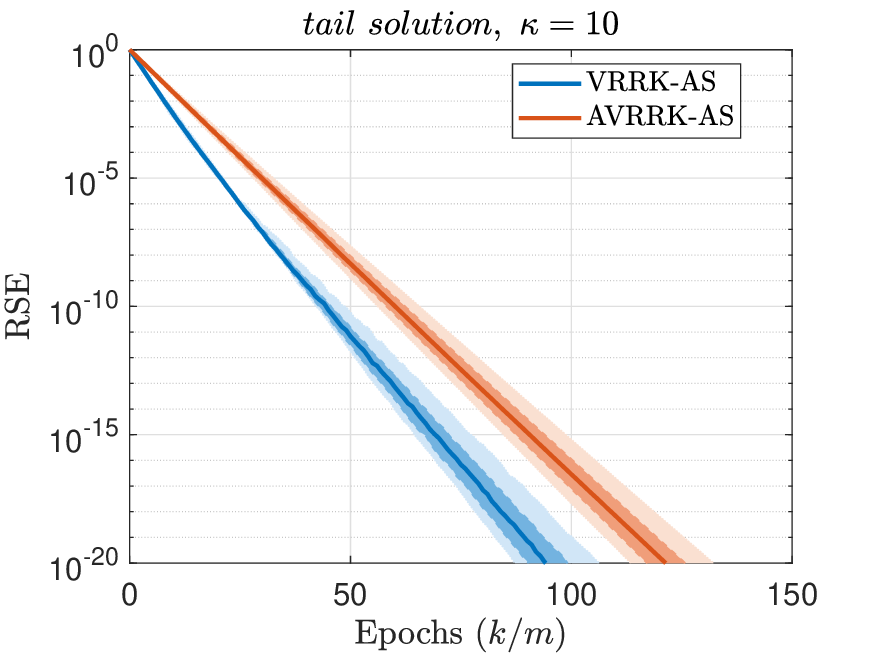}&
  \includegraphics[width=0.32\linewidth]{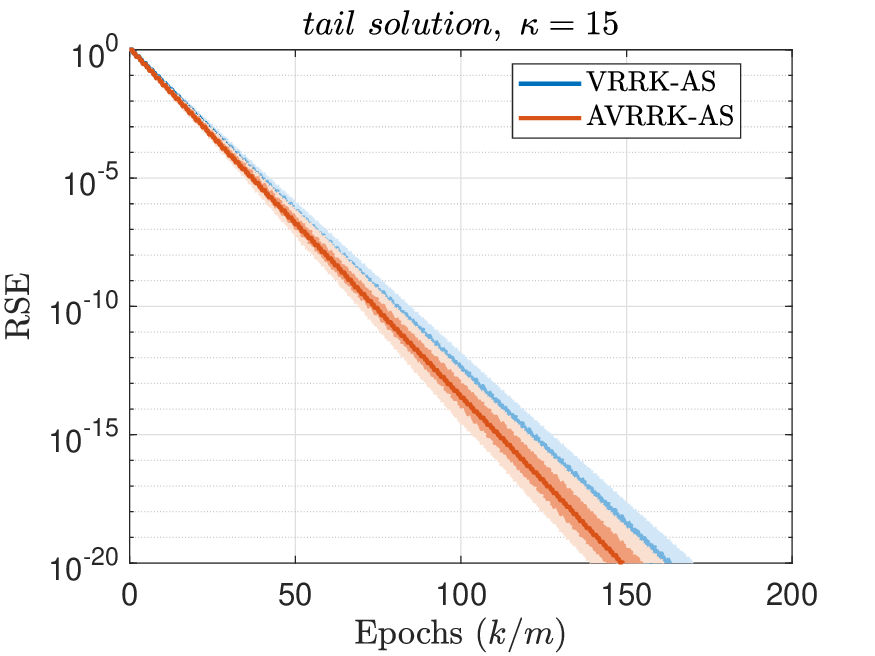}&
  \includegraphics[width=0.32\linewidth]{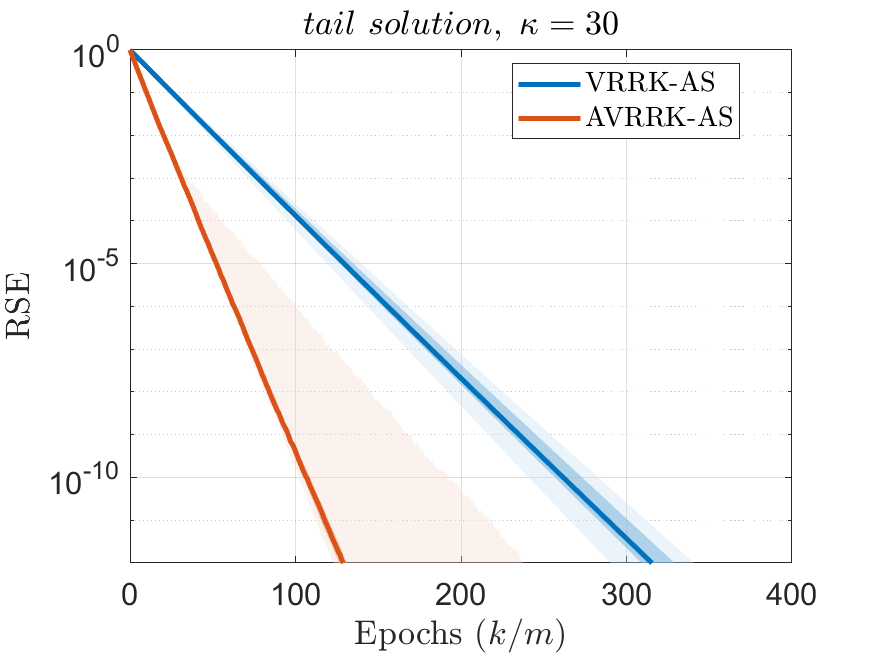}\\
  \includegraphics[width=0.32\linewidth]{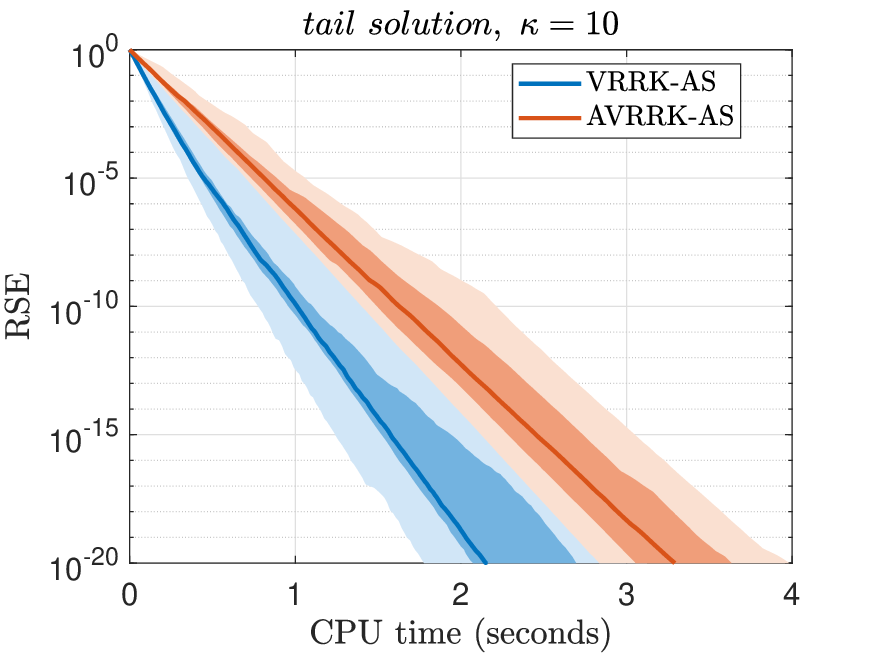}&
  \includegraphics[width=0.32\linewidth]{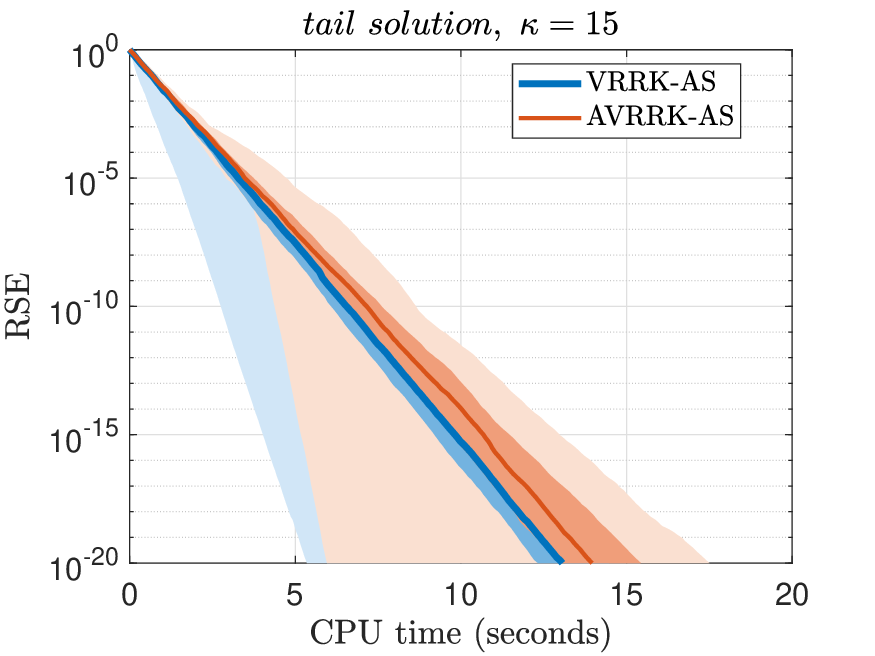}&
  \includegraphics[width=0.32\linewidth]{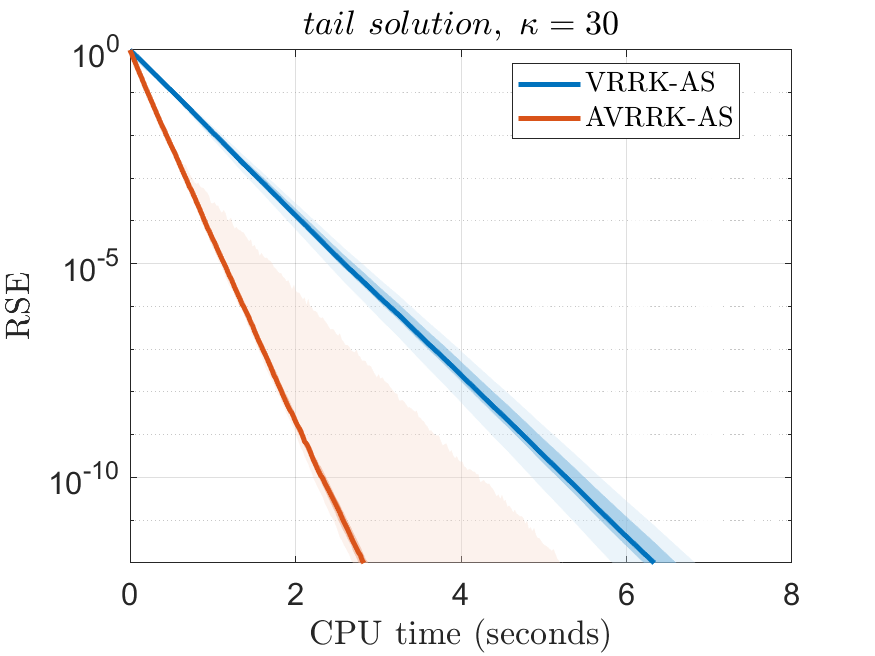}
 \end{tabular}
 \caption{Comparison of VRRK-AS and AVRRK-AS for tail-aligned solutions
 with $\kappa=10$, $15$, and $30$ (from left to right). The top row shows
 RSE versus epochs, and the bottom row shows RSE versus CPU time.}
 \label{fig:tail-vrrk-avrrk}
\end{figure}

As $\kappa$ increases, the convergence of VRRK-AS is increasingly limited
by components associated with the small positive singular values. The
advantage then shifts toward AVRRK-AS: its median RSE decreases more
rapidly with epochs and, for the larger tested values of $\kappa$, the
iteration reduction also yields a shorter CPU time. The transition is
particularly visible for the tail-aligned solution because the initial
error is concentrated in the slow spectral subspace. This behavior is
consistent with the accelerated term in Theorem~\ref{thm-acc} and
Remark~\ref{remark-quadratic}, whose dependence involves
$\sqrt{\alpha\sigma_{\min}^2(A)}$, in contrast to the corresponding
$\alpha\sigma_{\min}^2(A)$ dependence for VRRK-AS. 

\subsection{Comparison with REK and RCD}
\label{subsec:comparison-existing}

We  compare VRRK-AS and AVRRK-AS with REK and RCD. All systems remain
inconsistent. RCD is included only for full-column-rank matrices, because
for a rank-deficient matrix its iterates are not generally guaranteed to
converge to   $A^\dagger b$. REK and
the two proposed methods are used in both the full-rank and rank-deficient
experiments.

For the full-column-rank synthetic systems, we set $m=2000$ and $n=r=200$.
Figures~\ref{fig:tail-vrrk-avrrk-rek-rcd} and
\ref{fig:random-vrrk-avrrk-rek-rcd} compare all four methods for
tail-aligned and random solutions.  It can be seen that at smaller $\kappa$, RCD is highly competitive and
often requires the least CPU time even though it needs more epochs
than the proposed methods, reflecting the low cost of a coordinate update.
As $\kappa$ increases, REK and RCD slow substantially, while variance
reduction gives VRRK-AS and AVRRK-AS a pronounced advantage in epoch
count. In the most ill-conditioned cases, AVRRK-AS also has the best
CPU time performance. The transition is especially clear for the
tail-aligned solution at $\kappa=30$. For the rank-deficient synthetic systems, we  fix $m=2000$, $n=200$, $\kappa=100$, and
consider $r\in\{50,100,150\}$.  As shown in
Figure~\ref{fig:random-vrrk-avrrk-rek-rcd-def}, both proposed methods
reduce the RSE more rapidly than REK over most of the tested accuracy
range, and AVRRK-AS requires the least CPU time to attain common RSE
levels in all three cases.

\begin{figure}[htbp]
 \centering
 \begin{tabular}{@{}ccc@{}}
  \includegraphics[width=0.32\linewidth]{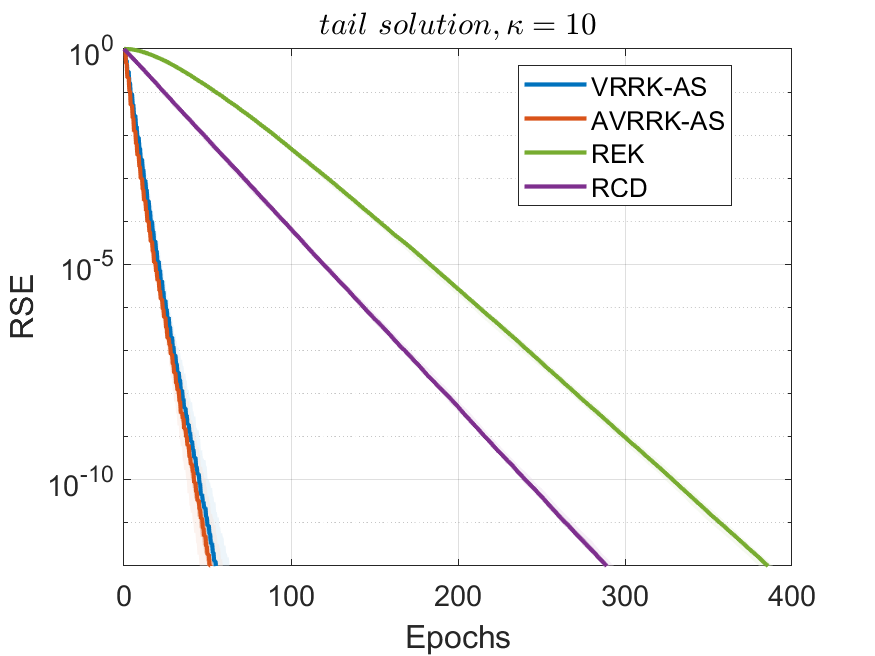}&
  \includegraphics[width=0.32\linewidth]{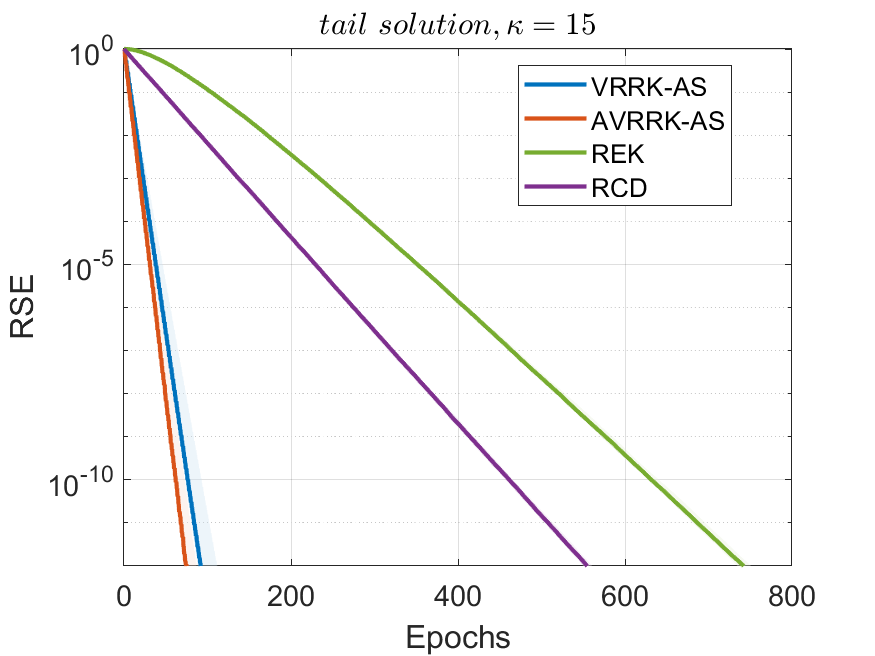}&
  \includegraphics[width=0.32\linewidth]{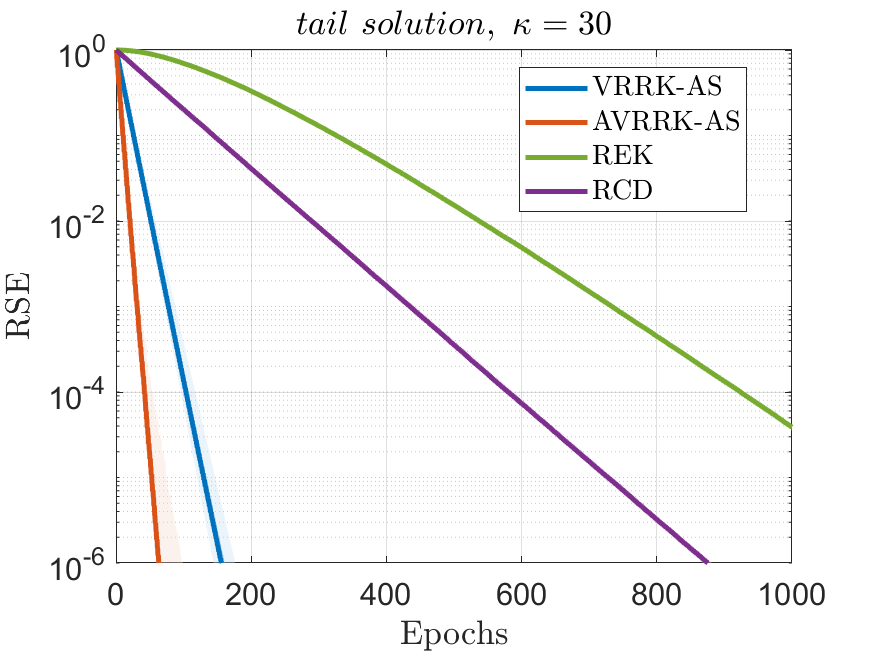}\\
  \includegraphics[width=0.32\linewidth]{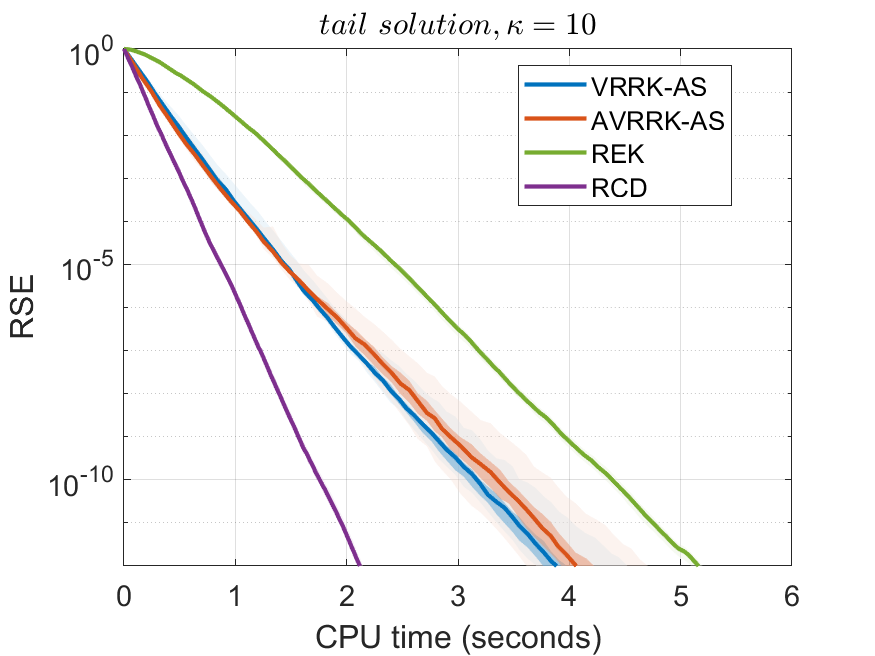}&
  \includegraphics[width=0.32\linewidth]{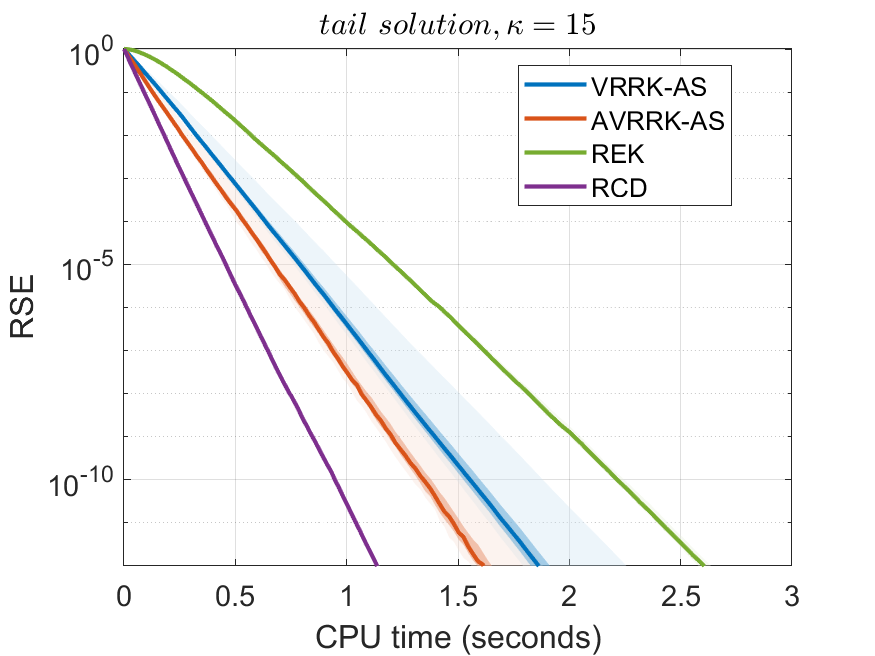}&
  \includegraphics[width=0.32\linewidth]{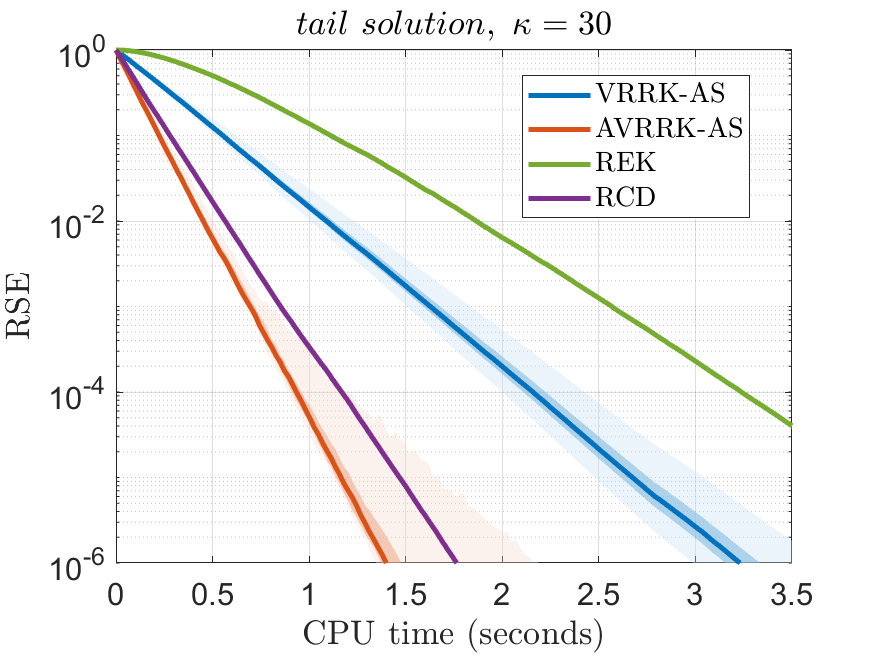}
 \end{tabular}
 \caption{Comparison of VRRK-AS, AVRRK-AS, REK, and RCD on
 full-column-rank synthetic systems with tail-aligned solutions. From left
 to right, $\kappa=10$, $15$, and $30$. The top and bottom rows report
 epochs and CPU time, respectively.}
 \label{fig:tail-vrrk-avrrk-rek-rcd}
\end{figure}

\begin{figure}[htbp]
 \centering
 \begin{tabular}{@{}ccc@{}}
  \includegraphics[width=0.32\linewidth]{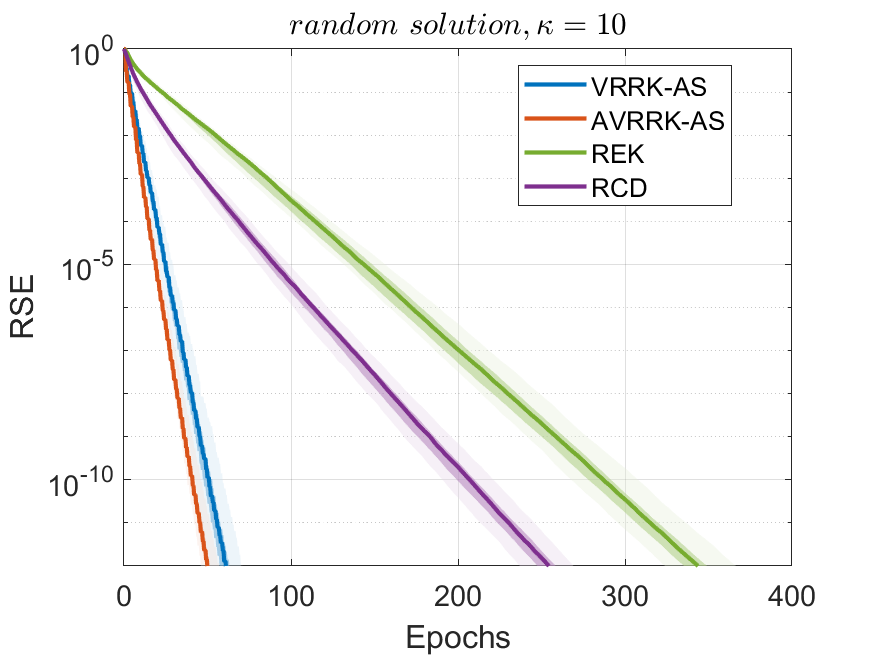}&
  \includegraphics[width=0.32\linewidth]{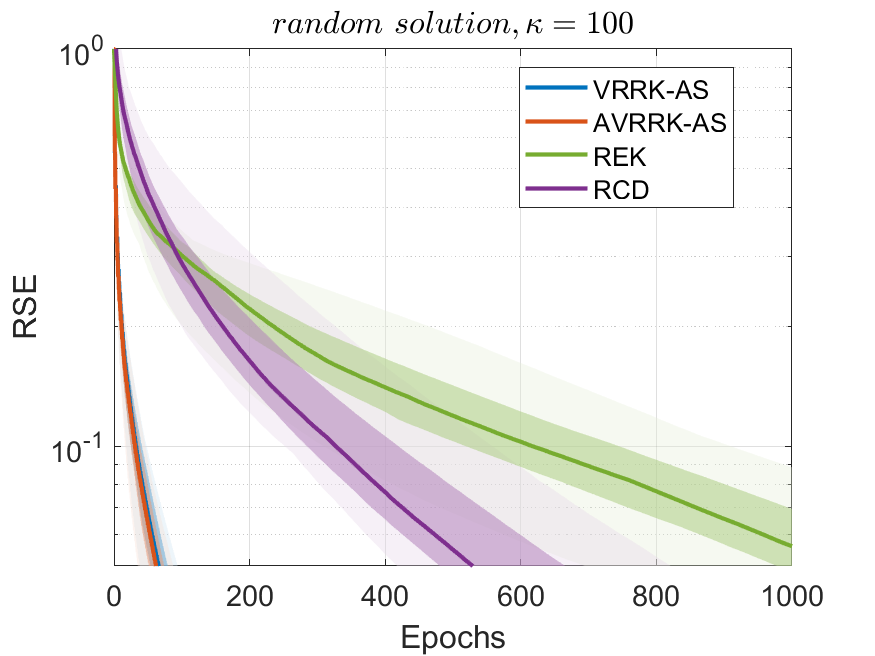}&
  \includegraphics[width=0.32\linewidth]{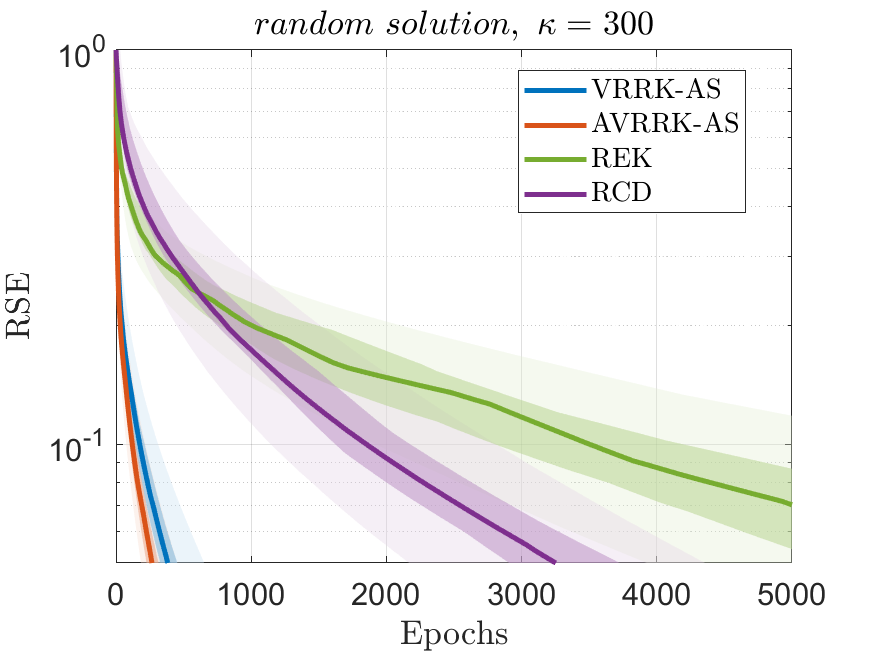}\\
  \includegraphics[width=0.32\linewidth]{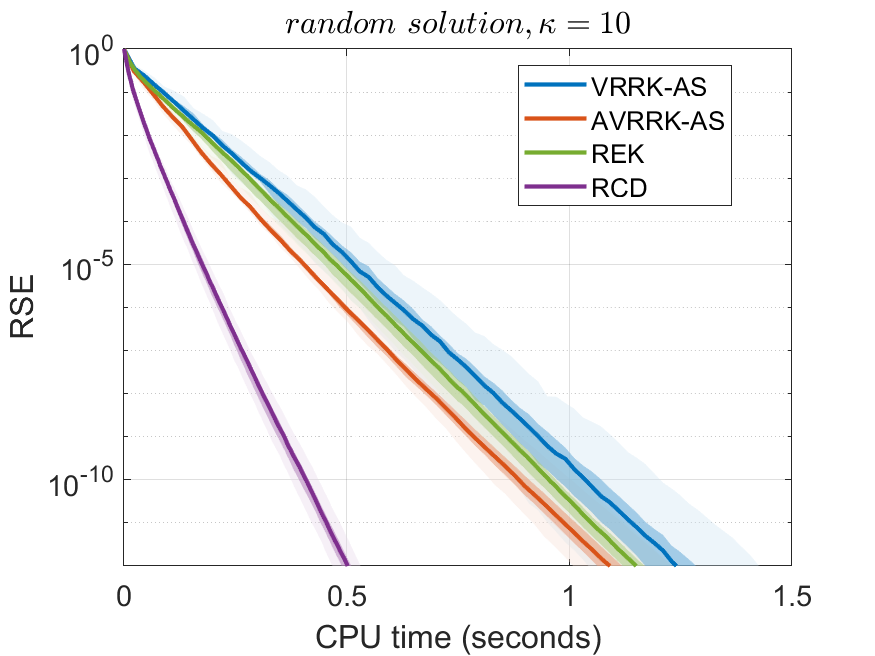}&
  \includegraphics[width=0.32\linewidth]{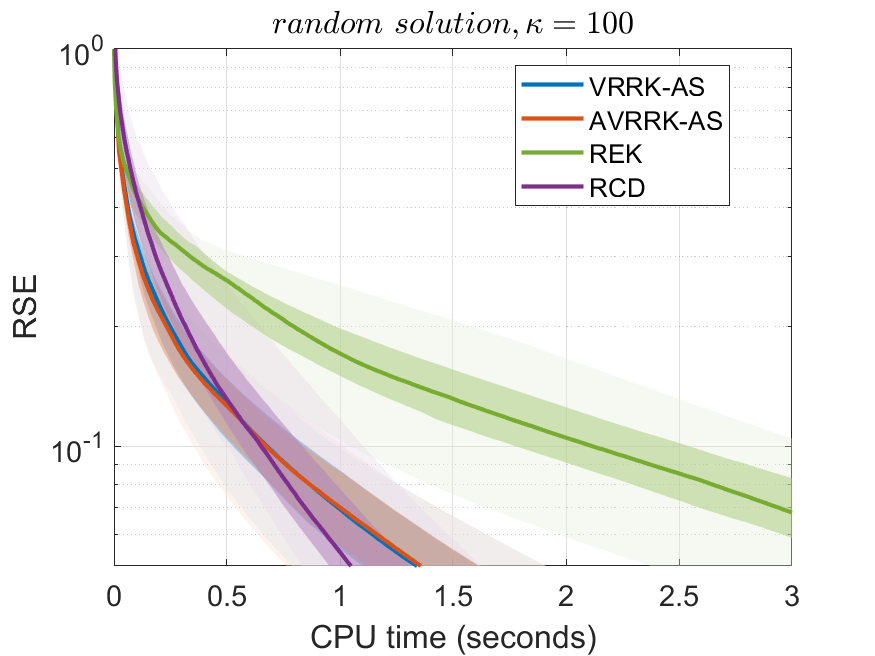}&
  \includegraphics[width=0.32\linewidth]{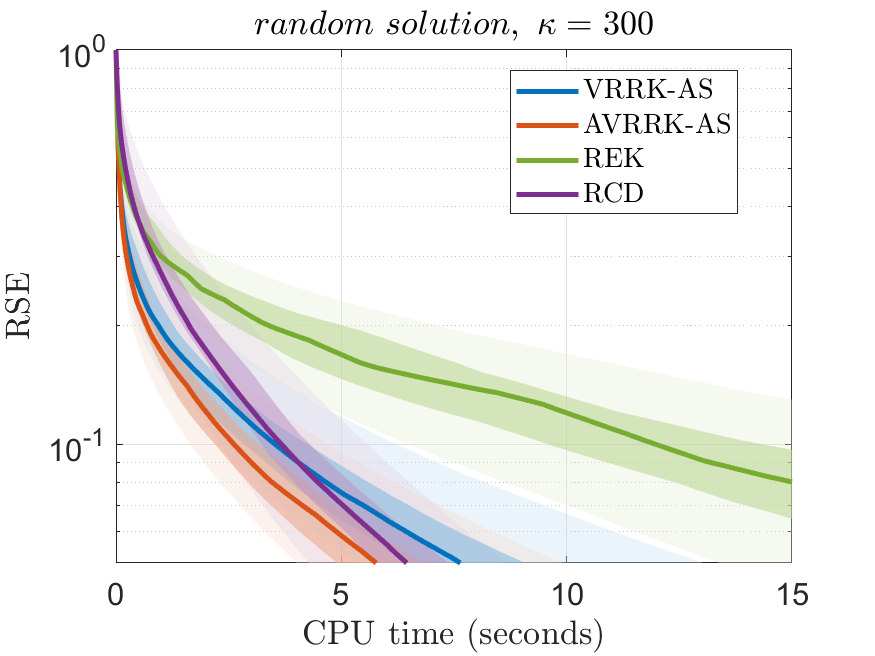}
 \end{tabular}
 \caption{Comparison of VRRK-AS, AVRRK-AS, REK, and RCD on
 full-column-rank synthetic systems with random solutions. From left to
 right, $\kappa=10$, $100$, and $300$. The top and bottom rows report
 epochs and CPU time, respectively.}
 \label{fig:random-vrrk-avrrk-rek-rcd}
\end{figure}


\begin{figure}[htbp]
 \centering
 \begin{tabular}{@{}ccc@{}}
  \includegraphics[width=0.32\linewidth]{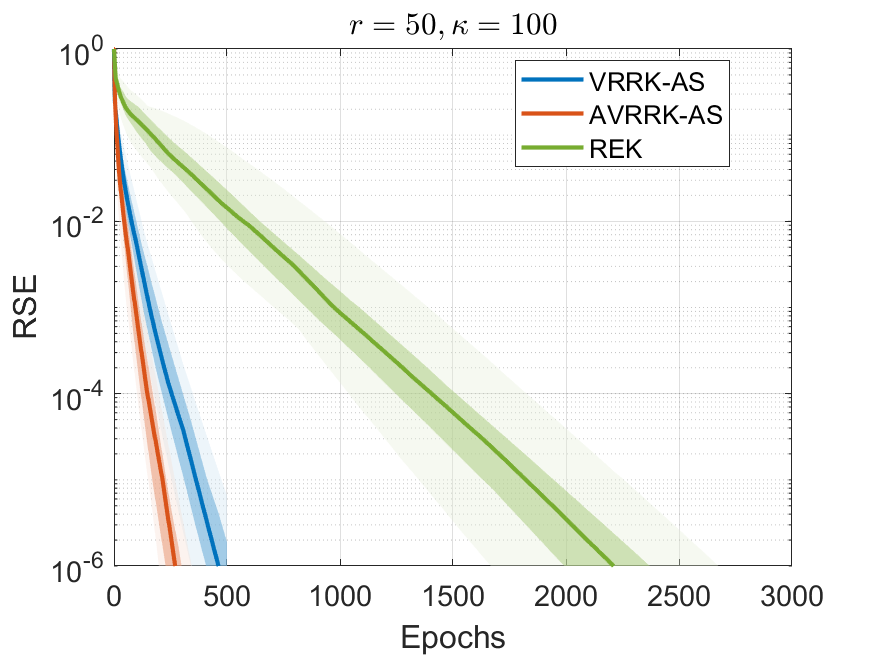}&
  \includegraphics[width=0.32\linewidth]{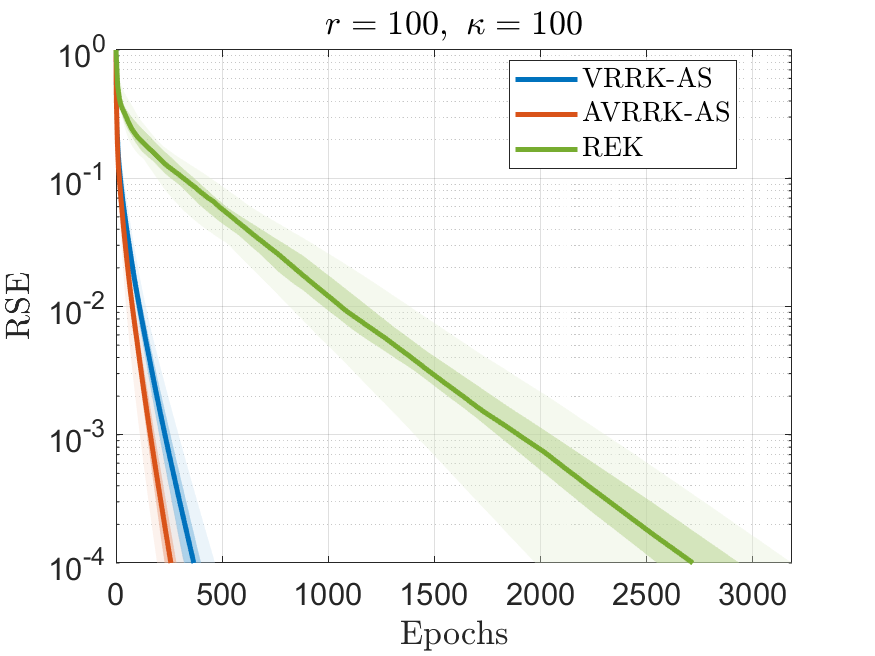}&
  \includegraphics[width=0.32\linewidth]{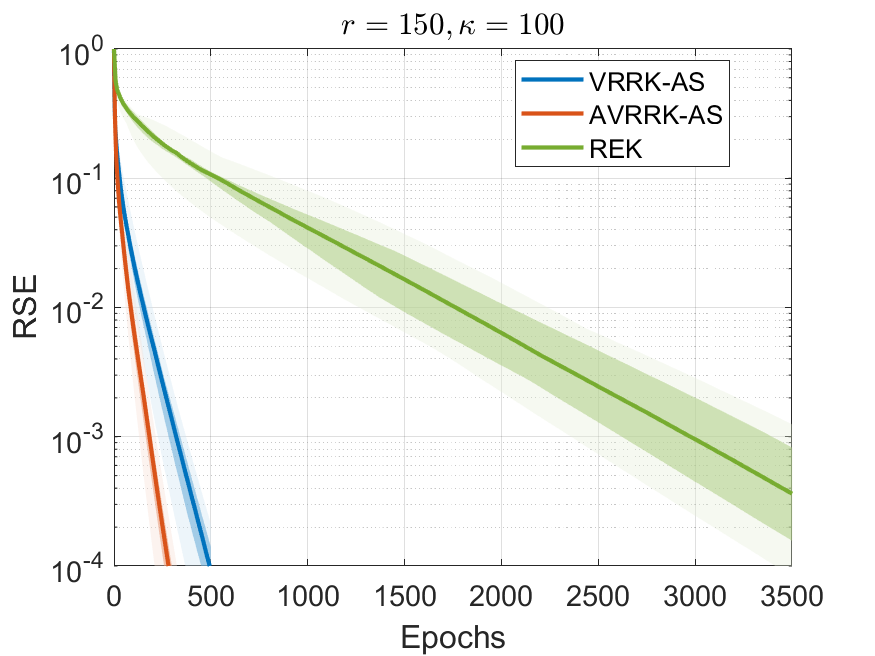}\\
  \includegraphics[width=0.32\linewidth]{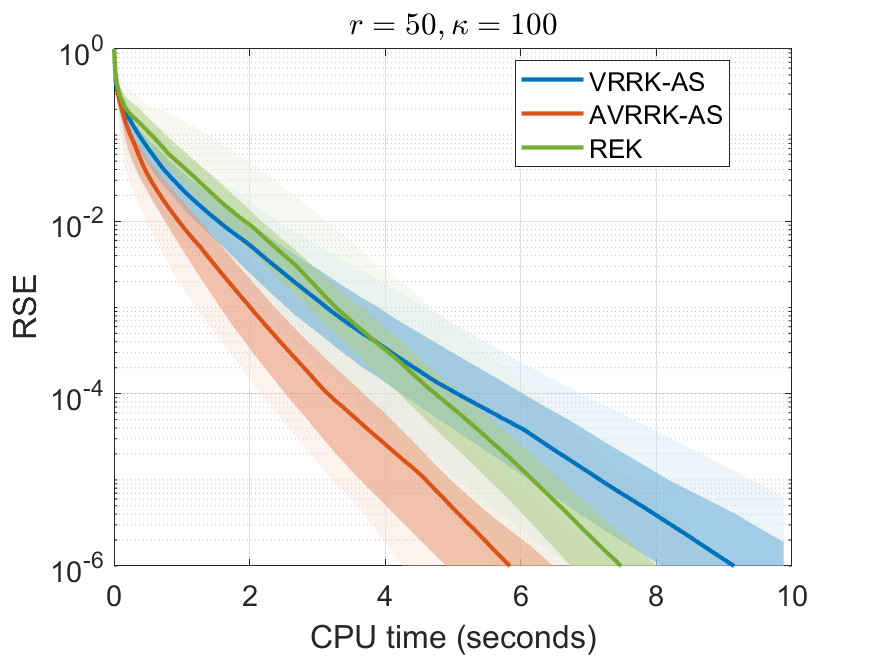}&
  \includegraphics[width=0.32\linewidth]{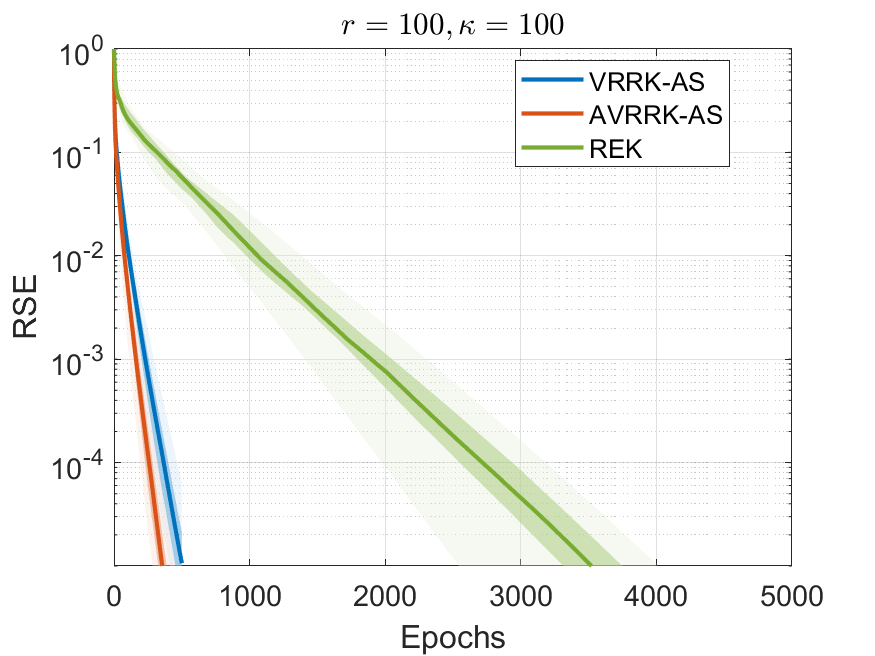}&
  \includegraphics[width=0.32\linewidth]{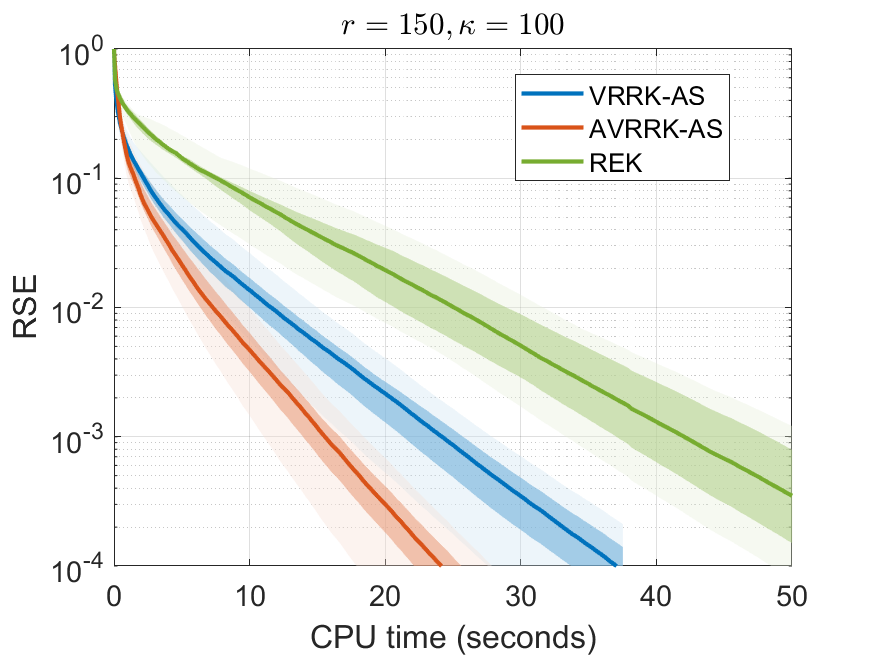}
 \end{tabular}
 \caption{Comparison of VRRK-AS, AVRRK-AS, and REK on rank-deficient
 synthetic systems with random solutions and $\kappa=100$. From left to
 right, $r=50$, $100$, and $150$. RCD is omitted because these matrices
 are not of full column rank. The top and bottom rows report epochs and
 CPU time, respectively.}
 \label{fig:random-vrrk-avrrk-rek-rcd-def}
\end{figure}
 
Finally, we compare all four methods on the full-column-rank matrices
\texttt{ash958}, \texttt{well1033}, and \texttt{illc1033}. These problems
complement the controlled synthetic tests with sparse and irregular row
and column structures. Figure~\ref{fig:sparse-vrrk-avrrk-rek-rcd} shows
that no method dominates on every matrix. On \texttt{ash958}, all methods
reduce the RSE rapidly, and RCD has a clear CPU time advantage. On
\texttt{well1033}, AVRRK-AS reaches the lowest median RSE and improves on
the other methods in late-stage accuracy. On the difficult
\texttt{illc1033} problem, none of the methods reaches high accuracy within
the prescribed budget; AVRRK-AS nevertheless attains the smallest median
terminal RSE, whereas RCD terminates at a substantially larger error.

\begin{figure}[htbp]
 \centering
 \begin{tabular}{@{}ccc@{}}
  \includegraphics[width=0.32\linewidth]{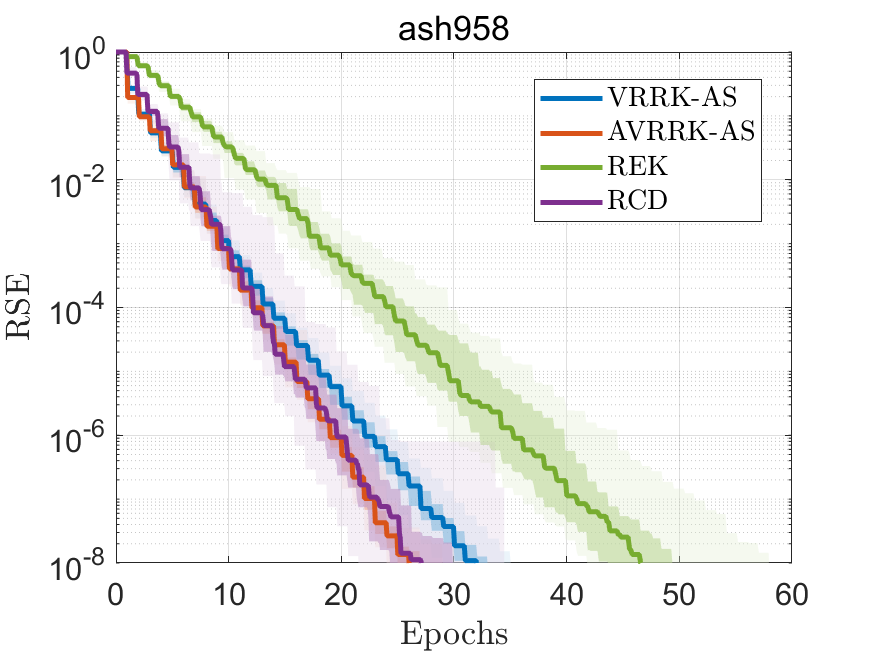}&
  \includegraphics[width=0.32\linewidth]{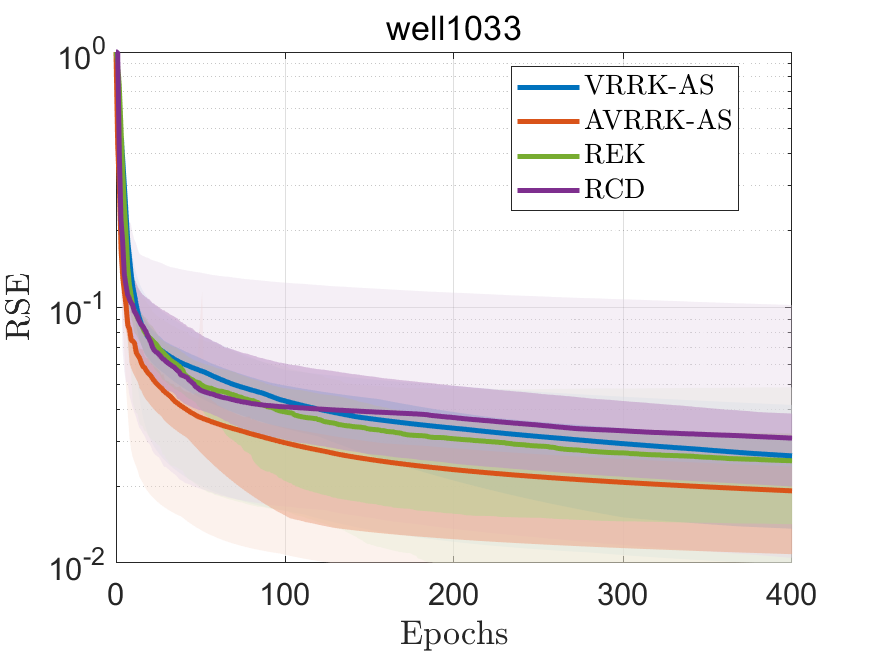}&
  \includegraphics[width=0.32\linewidth]{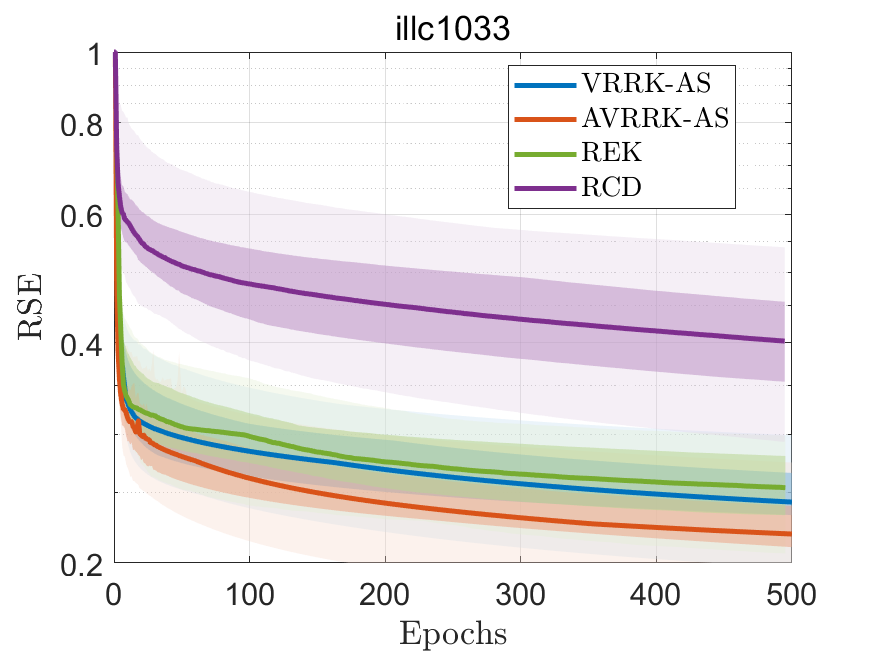}\\
  \includegraphics[width=0.32\linewidth]{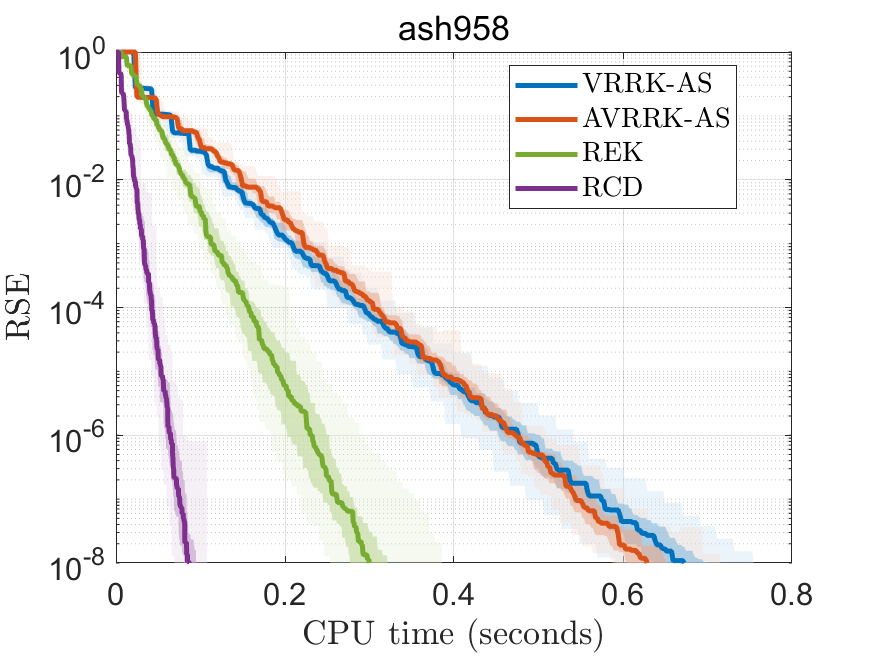}&
  \includegraphics[width=0.32\linewidth]{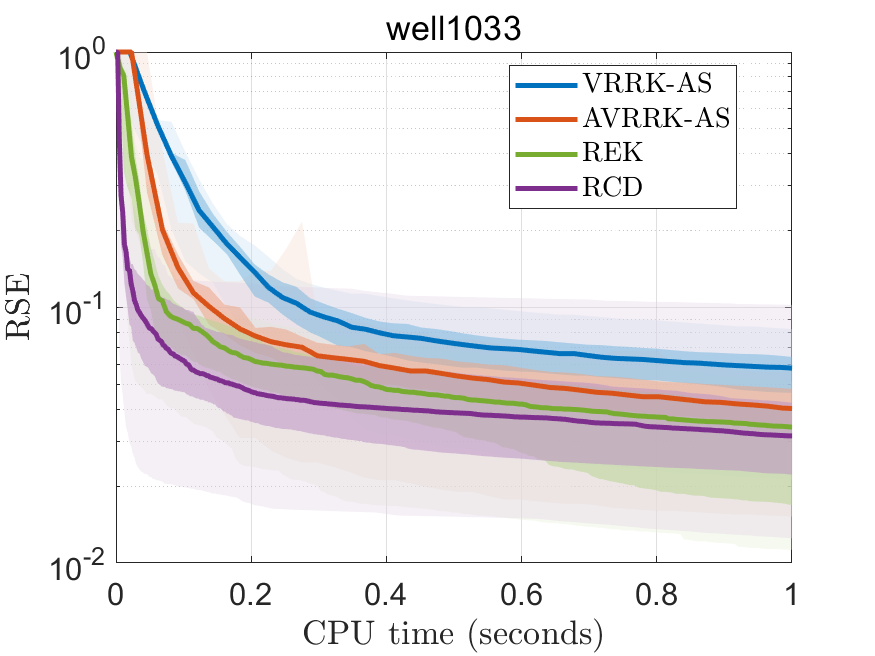}&
  \includegraphics[width=0.32\linewidth]{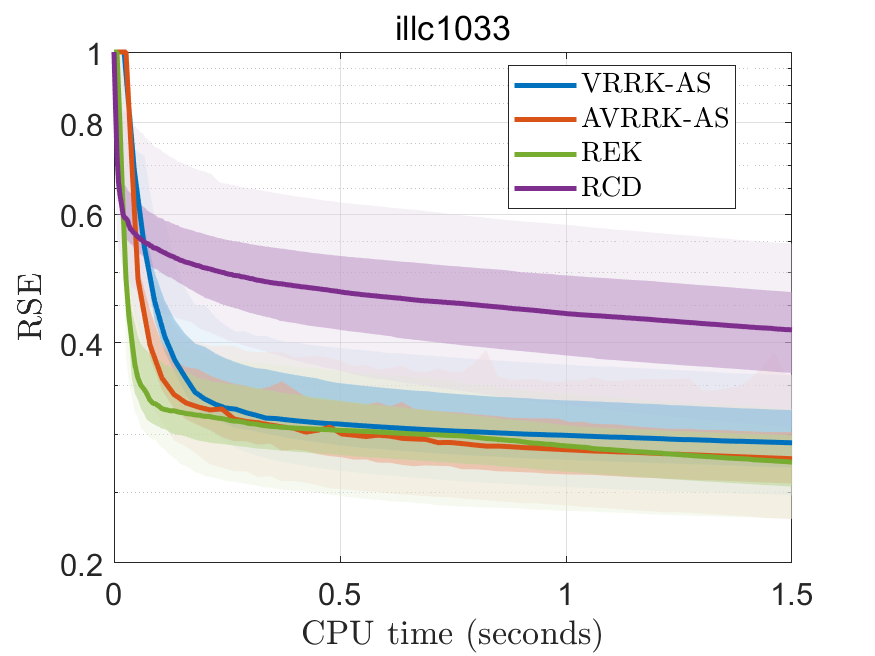}
 \end{tabular}
 \caption{Comparison of VRRK-AS, AVRRK-AS, REK, and RCD on three
 full-column-rank SuiteSparse matrices. From left to right,
 \texttt{ash958}, \texttt{well1033}, and \texttt{illc1033}. The top and
 bottom rows report epochs and CPU time, respectively.}
 \label{fig:sparse-vrrk-avrrk-rek-rcd}
\end{figure}

\section{Concluding remarks}

 We presented a residual-tracking formulation of variance-reduced
 randomized Kaczmarz methods and established its connection with
 JacSketch for least-squares problems.
 The formulation admits a row-only implementation with
 \(O(m+n)\) auxiliary storage, and our analysis establishes linear
 convergence to the minimum-norm least-squares solution, including
 for rank-deficient systems.
 We also developed and analyzed a momentum-accelerated extension.
 Numerical experiments show that VRRK-AS is generally more economical for the smaller tested values of condition number, whereas
AVRRK-AS becomes more effective as the problem becomes increasingly
ill-conditioned.

There are still many possible future avenues of research.
The performance of AVRRK-AS depends on multiple interacting parameters, and developing
adaptive or theoretically guided parameter-selection strategies would
improve its practical robustness \cite{zeng2023adaptive,sun2025connecting}. It would also be useful to investigate
nonuniform and adaptive sampling rules, block and mini-batch variants,
and parallel implementations for large-scale sparse problems \cite{xiao2026rplss,brust2023plss,zeng2026randomized}. 

\bibliography{main1230}

\section{Appendix. Proof of the main results}

For convenience, we write $x^* = A^\dagger b$. The following lemma is essential for our proofs.
\begin{lemma}\label{error-bound-lemma}
	Let $A \in \mathbb{R}^{m\times n}$ be any nonzero real matrix. For every $u \in  \operatorname{Range}(A^\top )$,
	it holds that
	$$
	\|A  u\|^2_2\geq \sigma_{\min}^2(A)\|u\|^2_2.
	$$
\end{lemma}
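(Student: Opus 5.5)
The plan is to use the singular value decomposition of $A$. Write the compact SVD $A = U_r \Sigma_r V_r^\top$, where $r=\rank(A)\geq 1$ (since $A\neq 0$), $U_r\in\mathbb{R}^{m\times r}$ and $V_r\in\mathbb{R}^{n\times r}$ have orthonormal columns, and $\Sigma_r=\diag(\sigma_1,\ldots,\sigma_r)$ with $\sigma_1\geq\cdots\geq\sigma_r=\sigma_{\min}(A)>0$. The first step is to identify $\operatorname{Range}(A^\top)=\operatorname{Range}(V_r\Sigma_r U_r^\top)=\operatorname{Range}(V_r)$. This holds because $U_r^\top$ is surjective onto $\mathbb{R}^r$ and $\Sigma_r$ is invertible.

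Given $u\in\operatorname{Range}(A^\top)$, I would write $u=V_r c$ for some $c\in\mathbb{R}^r$. Orthonormality of the columns of $V_r$ gives $\|u\|_2=\|c\|_2$. Then $Au=U_r\Sigma_r V_r^\top V_r c=U_r\Sigma_r c$, and orthonormality of the columns of $U_r$ gives
$$
\|Au\|_2^2=\|\Sigma_r c\|_2^2=\sum_{j=1}^r\sigma_j^2c_j^2\geq\sigma_r^2\|c\|_2^2=\sigma_{\min}^2(A)\|u\|_2^2.
$$
This is the claim.

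An equivalent route is spectral: $u^\top A^\top A u\geq\lambda_{\min}^+(A^\top A)\|u\|_2^2$ for $u\perp\ker(A^\top A)=\ker(A)=\operatorname{Range}(A^\top)^\perp$, together with $\lambda_{\min}^+(A^\top A)=\sigma_{\min}^2(A)$.

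There is no real obstacle here. The only point needing care is the identification of $\operatorname{Range}(A^\top)$ with the span of the right singular vectors belonging to the nonzero singular values. This identification is what excludes the null-space components, on which the inequality would fail.
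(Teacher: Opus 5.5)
Your proof is correct and complete. The paper states this lemma without proof and treats it as a standard fact. Your compact-SVD argument supplies exactly that standard justification: identify $\operatorname{Range}(A^\top)$ with $\operatorname{Range}(V_r)$, write $u=V_r c$, and bound $\|\Sigma_r c\|_2^2 \geq \sigma_r^2\|c\|_2^2$. Your spectral alternative, using $\ker(A^\top A)=\ker(A)=\operatorname{Range}(A^\top)^\perp$, is equally valid.
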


	\subsection{Proof of Theorem \ref{prop-rk-star} and  Proposition \ref{THM-EQ}}

\begin{proof}[Proof of Theorem \ref{prop-rk-star}]
	From the update rule \eqref{RK-star}, we have
	$$
	\begin{aligned}
		&\|x^{k+1}-x^*\|^2_2=\left\| x^k-x^*-\frac{ \langle a_{i_k}, x^k\rangle - b_{i_k}-r^*_{i_k}}{\|a_{i_k}\|_2^2} a_{i_k} \right\|^2_2\\
		&=\|x^k-x^*\|^2_2-2\left\langle x^k-x^*,\frac{\langle a_{i_k}, x^k\rangle - b_{i_k}-r^*_{i_k}}{\|a_{i_k}\|_2^2} a_{i_k} \right\rangle+\frac{(\langle a_{i_k}, x^k\rangle - b_{i_k}-r^*_{i_k})^2}{\|a_{i_k}\|_2^2}. 
	\end{aligned}
	$$
	Hence
	$$
	\begin{aligned}
		\mathbb{E}[\|x^{k+1}-x^*\|^2_2\mid x^k]=&\|x^k-x^*\|^2_2-\frac{2}{\|A\|^2_F}\left\langle x^k-x^*,A^\top(Ax^k-b-r^*) \right\rangle\\
		&+\frac{\|Ax^k-b-r^*\|^2_2}{\|A\|^2_F}\\
		=&\|x^k-x^*\|^2_2-\frac{\|A(x^k-x^*)\|^2_2}{\|A\|^2_F}.
	\end{aligned}
	$$
	Since $x^0 \in \operatorname{Range}(A^\top)$ and the update \eqref{RK-star} preserves this subspace (each step adds a vector in $\operatorname{Range}(A^\top)$), we have $x^k \in \operatorname{Range}(A^\top)$ for all $k$. Consequently, $x^k - x^* \in \operatorname{Range}(A^\top)$. 
	Lemma \ref{error-bound-lemma} then implies
	$
	\|A(x^k - x^*)\|_2^2 \ge \sigma_{\min}^2(A) \|x^k - x^*\|_2^2.
	$
	Substituting this bound into the conditional expectation yields
	\[
	\mathbb{E} [\|x^{k+1} - x^*\|_2^2 \mid x^k ] 
	\le \left(1 - \frac{\sigma_{\min}^2(A)}{\|A\|_F^2}\right) \|x^k - x^*\|_2^2.
	\]
	Taking full expectation and unrolling the recursion completes the proof.
\end{proof}

\begin{proof}[Proof of Proposition \ref{THM-EQ}]
	It suffices to prove $g^k = \tilde{g}^k$ for all $k \ge 0$.
	Define $D_k := \operatorname{diag}(r(x^k))P^{-1}$ and $\Lambda_k := \operatorname{diag}(\bar{r}^k)P^{-1}$ with $P := \operatorname{diag}(p_1,\ldots,p_m)$. 
	Then $ {G}(x^k)=A^\top D_k$, $ {J}^0 = A^\top D_0$, and from \eqref{barr-update} that
	\[
	\Lambda_{k+1} = \Lambda_k + (D_k - \Lambda_k) \sum_{i\in\mathcal{I}_k} e_i e_i^\top.
	\]
	We claim $ {J}^k = A^\top \Lambda_k$ for all $k$. 
	For $k=0$, $ {J}^0 = A^\top D_0   = A^\top \Lambda_0$ since $\Lambda_0 = D_0  $. 
	Assuming the claim holds for $k$, then
	\[
	A^\top \Lambda_{k+1} = A^\top \Lambda_k + (A^\top D_k - A^\top \Lambda_k) \sum_{i\in\mathcal{I}_k} e_i e_i^\top
	=  {J}^k + ( {G}(x^k) -  {J}^k) \sum_{i\in\mathcal{I}_k} e_i e_i^\top =  {J}^{k+1}.
	\]
	Hence $ {J}^k p = A^\top \Lambda_k p = A^\top \bar{r}^k =e^k$ and
	\[
	\tilde{g}^k =  {J}^k p + ( {G}(x^k) -  {J}^k) \sum_{i\in\mathcal{I}_k} e_i
	= e^k + \sum_{i\in\mathcal{I}_k} \frac{1}{p_i}\bigl(\langle a_i,x^k\rangle - b_i - (\bar{r}^k)_i\bigr) a_i = g^k.
	\]
	This completes the proof.
\end{proof}

\subsection{Proof of Theorem \ref{main-thm-linear}}

To prove Theorem \ref{main-thm-linear}, we first need to establish several essential lemmas.

\begin{lemma}\label{lemma-in-range}
	Let $\{x^k\}_{k\geq 0}$ be the sequence generated by Algorithm \ref{RABK-VR-EFF} or Algorithm \ref{compression-RABK-EF}. Then
	$x^{k}-A^\dagger b\in\operatorname{Range}(A^\top)$  for all $k \ge 0$.
\end{lemma}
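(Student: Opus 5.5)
The plan is a direct induction on $k$. The key observation is that $A^\dagger b\in\operatorname{Range}(A^\top)$, because $A^\dagger=A^\top(AA^\top)^\dagger$. It therefore suffices to show $x^k\in\operatorname{Range}(A^\top)$ for all $k$, which in turn follows once every update direction lies in $\operatorname{Range}(A^\top)$. The relevant directions are $g^k$, and in the accelerated case the auxiliary sequences $y^k$ and $z^k$.

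\textbf{Algorithm \ref{RABK-VR-EFF}.} I would prove jointly, by induction, that $x^k\in\operatorname{Range}(A^\top)$ and $e^k\in\operatorname{Range}(A^\top)$.
\begin{itemize}
\item Base case: $x^0=0$ and $e^0=-\sum_i b_i a_i=-A^\top b$, both in $\operatorname{Range}(A^\top)$.
\item Inductive step: $g^k=e^k+\sum_{i\in\mathcal{I}_k}c_i a_i$ for some scalars $c_i$. Each $a_i$, viewed as a column vector, equals $A^\top e_i\in\operatorname{Range}(A^\top)$, so $g^k\in\operatorname{Range}(A^\top)$.
\item Hence $x^{k+1}=x^k-\alpha g^k$ lies in the subspace, and so does $e^{k+1}$, which is $e^k$ plus a combination of the sampled rows $a_i$.
\end{itemize}
The vector $\bar r^k$ lives in $\mathbb{R}^m$ and plays no role here.

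\textbf{Algorithm \ref{compression-RABK-EF}.} I would strengthen the induction hypothesis to: $x^{k}, y^{k}, y^{k-1}, z^k, e^k\in\operatorname{Range}(A^\top)$.
\begin{itemize}
\item Base case: $x^0=z^0=y^0=y^{-1}=0$ and $e^0=-A^\top b$.
\item Inductive step:
\begin{enumerate}
\item As before, $g^k$ is $e^k$ plus a combination of rows, so $g^k\in\operatorname{Range}(A^\top)$.
\item Then $y^{k+1}=x^k-\alpha g^k\in\operatorname{Range}(A^\top)$.
\item $z^{k+1}$ is a linear combination of $z^k$, $x^k$ and $y^{k+1}$, so it lies in the subspace.
\item $e^{k+1}$ is $e^k$ plus a combination of rows $a_i$, so it lies in the subspace.
\item Finally, $x^{k+1}=\theta_1 z^{k+1}+\theta_2 y^{k}+(1-\theta_1-\theta_2)y^{k+1}$ is a linear combination of vectors already shown to lie in the subspace.
\end{enumerate}
\end{itemize}
The only care needed is the indexing of the two-step memory: the hypothesis must include $y^{k-1}$ as well as $y^k$, so that the formula for $x^{k+1}$ is covered.

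No step is a genuine obstacle. The one point worth stating explicitly is the fact $A^\dagger b\in\operatorname{Range}(A^\top)$. This follows from the SVD: writing $A^\dagger=V\Sigma^\dagger U^\top$, the range of $A^\dagger$ is spanned by the right singular vectors with nonzero singular values, which is exactly $\operatorname{Range}(A^\top)$. Subtracting, $x^k-A^\dagger b\in\operatorname{Range}(A^\top)$ for both algorithms.
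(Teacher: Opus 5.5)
Your proof is correct and uses the same induction as the paper: every update direction $g^k$ lies in $\operatorname{Range}(A^\top)$, so $x^k\in\operatorname{Range}(A^\top)$, and then $A^\dagger b\in\operatorname{Range}(A^\top)$ gives the claim. You also make explicit what the paper's one-line proof leaves implicit: $g^k\in\operatorname{Range}(A^\top)$ holds because $e^k$ stays in that subspace, and for Algorithm~\ref{compression-RABK-EF} the sequences $y^k$, $y^{k-1}$, $z^k$ must be carried through the induction as well.
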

\begin{proof}
	Since $x^0 = 0 \in \operatorname{Range}(A^\top)$ and each update in Algorithms \ref{RABK-VR-EFF} and \ref{compression-RABK-EF} satisfies $g^k \in \operatorname{Range}(A^\top)$, it follows by induction that $x^k \in \operatorname{Range}(A^\top)$ for all $k$. Consequently, $x^k - A^\dagger b \in \operatorname{Range}(A^\top)$.
\end{proof}

By Proposition \ref{THM-EQ}, Algorithm \ref{RABK-VR-EFF} coincides with the iteration scheme \eqref{JacSketch}. For convenience, we adopt the notations $ {J}^k$ and $ {G}(x^k)$ from \eqref{JacSketch} in the proof of Theorem \ref{main-thm-linear}.

\begin{lemma}\label{lemma-1}
	The iteration sequence $\{x^k\}_{k\geq 0}$ generated by Algorithm \ref{RABK-VR-EFF} satisfies
	$$
	\begin{aligned}
		\mathbb{E}_{k}\left[\left\| {J}^{k+1}- {G}(x^*)\right\|_F^2\right]&=\left\|   {J}^k- {G}(x^*)  \right\|^2_{F}-\left\|   {J}^k- {G}(x^*)  \right\|^2_{P} + \left\| {G}(x^k)- {G}(x^*)\right\|^2_P,
	\end{aligned}
	$$
	where $P$ is defined as \eqref{def-D}.
\end{lemma}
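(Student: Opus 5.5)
Write $S_k:=\sum_{i\in\mathcal{I}_k}e_ie_i^\top$, a random diagonal $0/1$ matrix, and abbreviate $M:=J^k-G(x^*)$ and $N:=G(x^k)-G(x^*)$. By \eqref{JacSketch}, subtracting $G(x^*)$ from the $J$-update gives
\[
J^{k+1}-G(x^*)=M+(N-M)S_k=M(I-S_k)+NS_k .
\]
The plan is to expand the squared Frobenius norm of this expression and take the conditional expectation $\mathbb{E}_k$ over $\mathcal{I}_k$ only. Here $M$ and $N$ are fixed, since both are determined by $x^k$ and $J^k$.

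The key observation is that $S_k$ is a diagonal projector, so $S_k^2=S_k$ and $(I-S_k)S_k=0$. Right multiplication by $S_k$ and by $I-S_k$ therefore splits the columns of an $n\times m$ matrix into two disjoint groups. The two pieces $M(I-S_k)$ and $NS_k$ have disjoint column supports, so the cross term vanishes. Explicitly,
\[
\operatorname{Tr}\bigl((I-S_k)M^\top N S_k\bigr)=\operatorname{Tr}\bigl(M^\top N S_k(I-S_k)\bigr)=0 .
\]
Hence
\[
\|J^{k+1}-G(x^*)\|_F^2=\operatorname{Tr}\bigl(M(I-S_k)M^\top\bigr)+\operatorname{Tr}\bigl(NS_kN^\top\bigr).
\]

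Next I take expectations. Since $\mathbb{E}[S_k]=\sum_i p_ie_ie_i^\top=P$ by \eqref{expectation-s}, and the trace is linear, we get
\[
\mathbb{E}_k\|J^{k+1}-G(x^*)\|_F^2=\operatorname{Tr}(MM^\top)-\operatorname{Tr}(MPM^\top)+\operatorname{Tr}(NPN^\top).
\]
The three terms are exactly $\|M\|_F^2-\|M\|_P^2+\|N\|_P^2$, using the paper's definition $\|X\|_W^2=\operatorname{Tr}(XWX^\top)$ with $W=P$ being $m\times m$. This is the claimed identity.

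There is no real obstacle here. The only point needing care is the vanishing of the cross term, which holds because $S_k$ has $0/1$ diagonal entries. It is not merely zero in expectation, and no second-moment quantity $q_{ij}$ enters. A secondary point is to make sure that the conditional expectation treats $J^k$ and $x^k$ as fixed.
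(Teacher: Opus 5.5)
Your proposal is correct and follows essentially the same route as the paper's proof: you split $J^{k+1}-G(x^*)$ into $(J^k-G(x^*))(I-S_k)+(G(x^k)-G(x^*))S_k$, note that the cross term vanishes identically because $S_k(I-S_k)=0$, and then take expectations using $\mathbb{E}_k[S_k]=P$. This yields $\|J^k-G(x^*)\|_{I-P}^2+\|G(x^k)-G(x^*)\|_P^2$, exactly as in the paper.
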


\begin{proof}
	From \eqref{JacSketch}, we know that
	$$
	\begin{aligned}
		 {J}^{k+1}- {G}(x^*)&= {J}^k- {G}(x^*)+\left( {G}(x^k)- {J}^k\right) \sum_{i\in\mathcal{I}_k}e_{i} e_{i}^\top\\
		&=\left( {J}^k- {G}(x^*)\right)\left(I-\sum_{i\in\mathcal{I}_k}e_{i}e_{i}^\top\right)+\left( {G}(x^k)- {G}(x^*)\right)\sum_{i\in\mathcal{I}_k}e_{i} e_{i}^\top.
	\end{aligned}
	$$
	Since $\langle I-\sum_{i\in\mathcal{I}_k}e_{i} e_{i}^\top,\sum_{i\in\mathcal{I}_k}e_{i} e_{i}^\top\rangle=0$, we know that $\left( {J}^k- {G}(x^*)\right)\left(I-\sum_{i\in\mathcal{I}_k}e_{i} e_{i}^\top\right)$ and   $\left( {G}(x^k)- {G}(x^*)\right)\sum_{i\in\mathcal{I}_k}e_{i} e_{i}^\top$ are orthogonal. By the Pythagorean theorem and \eqref{expectation-s}, we have
	$$
	\begin{aligned}
		&\mathbb{E}_{k}\left[\left\| {J}^{k+1}- {G}(x^*)\right\|_F^2\right]\\
		=&\mathbb{E}_{k}\left[  \left\|\left( {J}^k- {G}(x^*)\right)\left(I-\sum_{i\in\mathcal{I}_k}e_{i}e_{i}^\top\right)\right\|^2_F \right]+\mathbb{E}_{k}\left[  \left\| \left( {G}(x^k)- {G}(x^*)\right)\sum_{i\in\mathcal{I}_k}e_{i}e_{i}^\top\right\|^2_F \right]\\
		=&\mathbb{E}_{k}\left[ \operatorname{Tr}\left(\left( {J}^k- {G}(x^*)\right) \left(I- \sum_{i\in\mathcal{I}_k}e_{i}e_{i}^\top\right)\left( {J}^k- {G}(x^*)\right)^\top\right) \right]\\
		&+\mathbb{E}_{k}\left[\operatorname{Tr} \left( \left( {G}(x^k)- {G}(x^*)\right)\sum_{i\in\mathcal{I}_k}e_{i}e_{i}^\top\left( {G}(x^k)- {G}(x^*)\right)^\top\right) \right]\\
		=&\left\|   {J}^k- {G}(x^*)  \right\|^2_{I-P}  + \left\| {G}(x^k)- {G}(x^*)\right\|^2_P 
	\end{aligned}
	$$
	as desired.
\end{proof}

\begin{lemma}\label{lemma-2}
	The second moment of the estimated gradient $\{g^k\}_{k\geq0}$ in Algorithm \ref{RABK-VR-EFF}  is bounded by
	$$	
	\mathbb{E}_k[\|g^k\|^2_2]\leq 2\left\| {J}^k- {G}(x^*) \right\|^2_{Q-pp^\top}+2\left\| {G}(x^k)- {G}(x^*)\right\|^2_Q,
	$$
	where $p$ and $D$ are defined as \eqref{def-D}.
\end{lemma}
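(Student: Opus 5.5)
We will write the gradient estimator in JacSketch form and split it around the optimum. By Proposition~\ref{THM-EQ}, with $S_k:=\sum_{i\in\mathcal{I}_k}e_i$ we have $g^k = J^k p + (G(x^k)-J^k)S_k$. Since $A^\top r^*=0$, we get $G(x^*)p = A^\top r^* = 0$, so $g^k$ can be rewritten without changing its value as
\[
g^k = \bigl(G(x^k)-G(x^*)\bigr)S_k + \bigl(J^k-G(x^*)\bigr)\bigl(p - S_k\bigr) + G(x^*)p ,
\]
and the last term vanishes. Applying $\|u+w\|_2^2\le 2\|u\|_2^2+2\|w\|_2^2$ reduces the claim to computing the conditional expectations of the two squared norms separately.

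For any fixed matrix $M\in\mathbb{R}^{n\times m}$, the identity \eqref{expectation-is} gives $\mathbb{E}_k[S_kS_k^\top]=Q$. Hence
\[
\mathbb{E}_k\|MS_k\|_2^2=\operatorname{Tr}\bigl(M\,\mathbb{E}_k[S_kS_k^\top]M^\top\bigr)=\operatorname{Tr}(MQM^\top)=\|M\|_Q^2 .
\]
Similarly, using $\mathbb{E}_k[S_k]=p$ from \eqref{expectation-s}, we get $\mathbb{E}_k[(S_k-p)(S_k-p)^\top]=Q-pp^\top$, and therefore $\mathbb{E}_k\|M(p-S_k)\|_2^2=\|M\|_{Q-pp^\top}^2$. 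Both $Q$ and $Q-pp^\top$ are positive semidefinite, so the notation $\|\cdot\|_W$ is legitimate here.

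Set $M=G(x^k)-G(x^*)$ in the first identity and $M=J^k-G(x^*)$ in the second. Both matrices are measurable with respect to the history up to iteration $k$, since $J^k$ depends only on $\mathcal{I}_0,\dots,\mathcal{I}_{k-1}$. Substituting into the factor-two inequality yields the stated bound.

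The main point to get right is the regrouping: we insert $G(x^*)$ and use $G(x^*)p=0$, so that the term paired with $J^k$ has mean zero. This is what produces the covariance weight $Q-pp^\top$ rather than $Q$ in front of $\|J^k-G(x^*)\|^2$. Everything else is the trace computation.
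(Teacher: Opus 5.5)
Your proposal is correct and follows essentially the same route as the paper. You insert $G(x^*)$, use $G(x^*)p=A^\top r^*=0$ to turn the $J^k$ part into the zero-mean term $(J^k-G(x^*))(p-S_k)$, split with $\|u+w\|_2^2\le 2\|u\|_2^2+2\|w\|_2^2$, and evaluate each piece by a trace computation; the only cosmetic difference is that the paper obtains the weight $Q-pp^\top$ from the variance identity $\mathbb{E}_k\|MS_k\|_2^2-\|Mp\|_2^2$ rather than by writing out the covariance matrix of $S_k$ directly.
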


\begin{proof}
	From \eqref{JacSketch}, we know that
	$$
	\begin{aligned}
		\mathbb{E}_k[\|g^k\|^2_2]&=\mathbb{E}_k\left[\left\| {J}^k p+\left( {G}(x^k)- {J}^k\right)\sum_{i\in\mathcal{I}_k}e_{i}\right\|^2_2\right]\\
		&
		=\mathbb{E}_k\left[\left\| {J}^k p+\left( {G}(x^k)- {G}(x^*)+ {G}(x^*)- {J}^k\right) \sum_{i\in\mathcal{I}_k}e_{i}\right\|^2_2\right]\\
		&\leq 2\mathbb{E}_k\left[\left\| {J}^k p-\left( {J}^k- {G}(x^*)\right) \sum_{i\in\mathcal{I}_k}e_{i}\right\|^2_2\right]+2\mathbb{E}_k\left[\left\|\left( {G}(x^k)- {G}(x^*)\right) \sum_{i\in\mathcal{I}_k}e_{i}\right\|^2_2\right]
	\end{aligned}
	$$
	Note that $ {G}(x^*)p=0$ and by \eqref{expectation-is}, we have
	$$
	\begin{aligned}
		&\mathbb{E}_k\left[\left\| {J}^k p-\left( {J}^k- {G}(x^*)\right) \sum_{i\in\mathcal{I}_k}e_{i}\right\|^2_2\right]=	\mathbb{E}_k\left[\left\|\left( {J}^k- {G}(x^*)\right) p-\left( {J}^k- {G}(x^*)\right) \sum_{i\in\mathcal{I}_k}e_{i}\right\|^2_2\right]\\
		&=\mathbb{E}_k\left[\left\|\left( {J}^k- {G}(x^*)\right) \sum_{i\in\mathcal{I}_k}e_{i}\right\|^2_2\right]-\left\|\left( {J}^k- {G}(x^*)\right) p\right\|^2_2\\
		&=   \left\|  {J}^k- {G}(x^*) \right\|^2_{Q-pp^\top},
	\end{aligned}
	$$
	which together with
	$$
	\mathbb{E}_k\left[\left\|\left( {G}(x^k)- {G}(x^*)\right) \sum_{i\in\mathcal{I}_k}e_{i}\right\|^2_2\right]=\left\| {G}(x^k)- {G}(x^*)\right\|^2_Q
	$$
	implies the lemma.
\end{proof}

Now we are ready to prove Theorem \ref{main-thm-linear}.

\begin{proof}[Proof of Theorem \ref{main-thm-linear}]
		First,  note that
		$$
		\mathbb{E}_k[g^k]= {J}^kp+\mathbb{E}_k\left[( {G}(x^k)- {J}^k)\sum_{i\in\mathcal{I}_k}e_{i}\right]= {J}^k p+( {G}(x^k)- {J}^k)p= {G}(x^k) p=\nabla f(x^k).
		$$
		We can have
		$$
		\begin{aligned}
			\mathbb{E}_k[\|x^{k+1}-x^*\|^2_2]&=\mathbb{E}_k[\|x^k-x^*-\alpha g^k\|^2_2]\\
			&=\|x^k-x^*\|^2_2-2\alpha\langle \mathbb{E}_k[g^k],x^k-x^*\rangle+\alpha^2 \mathbb{E}_k[\|g^k\|^2_2]\\
			&=\|x^k-x^*\|^2_2-2\alpha\langle \nabla f(x^k),x^k-x^*\rangle+\alpha^2 \mathbb{E}_k[\|g^k\|^2_2]\\
			&=\|x^k-x^*\|^2_2-2\alpha\langle \nabla f(x^k)-\nabla f(x^*),x^k-x^*\rangle+\alpha^2 \mathbb{E}_k[\|g^k\|^2_2],
		\end{aligned}
		$$
		where the last inequality follows from $\nabla f(x^*)=0$. Note that 
		$$
		\langle \nabla f(x^k)-\nabla f(x^*),x^k-x^*\rangle=\langle A^\top (Ax^k-Ax^*),x^k-x^*\rangle=\|A(x^k-x^*)\|^2_2,
		$$
		which together with Lemma \ref{lemma-2} implies
		$$	\begin{aligned}
			\mathbb{E}_k[\|x^{k+1}-x^*\|^2_2]&\leq \|x^k-x^*\|^2_2-2\alpha\|A(x^k-x^*)\|^2_2\\
			&
			+2\alpha^2\left(\left\| {J}^k- {G}(x^*) \right\|^2_{Q-pp^\top}+\left\| {G}(x^k)- {G}(x^*)\right\|^2_Q\right).
		\end{aligned}
		$$
		By Lemmas \ref{error-bound-lemma} and \ref{lemma-in-range}, we have 
		\begin{equation}
			\label{xie-proof-0827-1}
			\begin{aligned}
				&\mathbb{E}_k\left[\|x^{k+1}-x^*\|^2_2+\beta\alpha\left\| {J}^{k+1}- {G}(x^*) \right\|^2_F\right]\\
				\leq& (1-\alpha \sigma_{\min}^2(A))\|x^k-x^*\|^2_2-\alpha\|A(x^k-x^*)\|^2_2	+2\alpha^2 \left\| {J}^k- {G}(x^*) \right\|^2_{Q-pp^\top}
				\\
				&+ 2\alpha^2\left\| {G}(x^k)- {G}(x^*)\right\|^2_Q +\beta\alpha\left\| {J}^{k+1}- {G}(x^*)\right\|^2_F,
			\end{aligned}
		\end{equation}
		From Lemma \ref{lemma-1}, we know that
		\begin{equation}
			\label{J-ineq-11}
			\mathbb{E}_{k}\left[\left\| {J}^{k+1}- {G}(x^*)\right\|_F^2\right]\leq \left(1-\lambda_{\min}(P)\right)\left\|   {J}^k- {G}(x^*)  \right\|^2_{F}  +
			\left\| {G}(x^k)- {G}(x^*)\right\|^2_P.
		\end{equation}
		We also have
		\begin{equation}
			\label{proof-xie-lemma-826-1}
			\begin{aligned}
				\left\|\left( {G}(x^k)- {G}(x^*)\right)\right\|_P^2& 
				=\sum_{i=1}^{m}p_i \cdot\|\nabla f_{i}(x^k)-\nabla f_{i}(x^*)\|_2^2
				=\sum_{i=1}^{m}p_i \frac{\|\langle a_i,x^k-x^*\rangle a_i\|^2_2}{p_i^2}
				\\
				&= \| A(x^k-x^*)\|^2_{D}.
			\end{aligned}
		\end{equation}
		Combine \eqref{xie-proof-0827-1}, \eqref{J-ineq-11}, \eqref{proof-xie-lemma-826-1}, and note that $\left\| {J}^{k}- {G}(x^*)\right\|_F^2 =\left\| \bar{r}^k-r^* \right\|_{DP^{-1}}^2 $, we have
		$$
		\begin{aligned}
			&\mathbb{E}_k[\Phi^{k+1}] =\mathbb{E}_k\left[\|x^{k+1}-x^*\|^2_2+\beta\alpha\left\| {J}^{k+1}- {G}(x^*) \right\|^2_F\right]\\
			\leq& (1-\alpha \sigma_{\min}^2(A))\|x^k-x^*\|^2_2-\alpha\|A(x^k-x^*)\|^2_2+2\alpha^2 \left\| {J}^k- {G}(x^*) \right\|^2_{Q-pp^\top}\\
			&+ 2\alpha^2\left\| {G}(x^k)- {G}(x^*)\right\|^2_Q +\beta\alpha\left(1-\lambda_{\min}(P)\right)\left\|   {J}^k- {G}(x^*)  \right\|^2_{F}  
			+ \beta\alpha \left\| {G}(x^k)- {G}(x^*)\right\|^2_P
			\\
			\leq &(1-\alpha \sigma_{\min}^2(A))\|x^k-x^*\|^2_2
			-\left( \alpha-(2\alpha^2 \lambda_{\max}(QP^{-1})+\beta\alpha) \lambda_{\max}(D)\right)\|A(x^k-x^*)\|^2_2
			\\&+ \left(2\alpha^2\lambda_{\max}(Q-pp^\top) +\beta\alpha\left(1-\lambda_{\min}(P)\right)  \right) \left\|   {J}^k- {G}(x^*)  \right\|^2_{F}.
		\end{aligned}
		$$
		Since
		$\alpha\leq \min\left\{\frac{1-\beta\lambda_{\max}(D)}{2\lambda_{\max}(D)\lambda_{\max}(QP^{-1})}, \frac{\beta\lambda_{\min}(P)}{2\lambda_{\max}(Q-pp^\top)+\beta\sigma_{\min}^2(A)}  \right\},
		$ we know that
		$\alpha-(2\alpha^2 \lambda_{\max}(QP^{-1})+\beta\alpha) \lambda_{\max}(D)\geq0
		$ 
		and 
		$ 2\alpha^2\lambda_{\max}(Q-pp^\top) +\beta\alpha\left(1-\lambda_{\min}(P)\right) \leq  \beta\alpha(1-\alpha \sigma_{\min}^2(A)).
		$
		Hence, we have
		$
		\mathbb{E}_k[\Phi^{k+1}]\leq (1-\alpha \sigma_{\min}^2(A))\Phi^{k}.
		$
		Taking the expectation over the entire history we have
		$$
		\mathbb{E}[\Phi^{k+1}]\leq (1-\alpha \sigma_{\min}^2(A))\mathbb{E}[\Phi^{k}].
		$$
		By induction on the iteration index $k$, we can obtain the desired result.
	\end{proof}


\subsection{Proof of Theorem \ref{thm-acc}}

To prove Theorem \ref{thm-acc}, we first need to establish several essential lemmas.

\begin{lemma}\label{lemma-acc-3}
	Let $x\in\mathbb{R}^n$ be any fixed vector. Then the iteration sequence generated by Algorithm \ref{compression-RABK-EF} satisfies
	\begin{equation}
		\label{lemma-acc-3-1}
		\begin{aligned}
			\mathbb{E}_k[ \|Ay^{k+1}-b\|^2_2]
			\leq & \|Ax-b\|^2_2- \|A(x^k-x)\|^2_2+\mathbb{E}_k\left[\frac{2}{\alpha}\langle x^k-x,x^k-y^{k+1}\rangle\right.\\
			&+\left.\alpha\|A^\top (Ax^k-b)-g^k\|^2_2 + \| A(y^{k+1}-x^k)\|^2_2-\frac{1}{\alpha}\|y^{k+1}-x^k\|^2_2\right].
		\end{aligned}
	\end{equation}
\end{lemma}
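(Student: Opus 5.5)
The plan is to use the exact quadratic structure of $h(y):=\|Ay-b\|_2^2$, for which $\frac12\nabla h = A^\top(Ay-b)=\nabla f(y)$ with $f$ as in \eqref{quar-f}. The main ingredients are two exact second-order expansions around $x^k$, the relation $y^{k+1}-x^k=-\alpha g^k$ from Step 4 of Algorithm \ref{compression-RABK-EF}, a single Young inequality, and unbiasedness of $g^k$. Throughout, $x$ is a fixed (deterministic) vector and $\mathbb{E}_k$ conditions on the history up to the point where $x^k$, $e^k$ and $\bar r^k$ are formed.

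\emph{Step 1: exact identity.} Since $h$ is quadratic,
$h(y^{k+1})=h(x^k)+2\langle A^\top(Ax^k-b),y^{k+1}-x^k\rangle+\|A(y^{k+1}-x^k)\|_2^2$ and
$h(x)=h(x^k)+2\langle A^\top(Ax^k-b),x-x^k\rangle+\|A(x-x^k)\|_2^2$.
Subtracting the second from the first gives
\[
h(y^{k+1})=\|Ax-b\|_2^2-\|A(x^k-x)\|_2^2+2\langle \nabla f(x^k),y^{k+1}-x\rangle+\|A(y^{k+1}-x^k)\|_2^2 .
\]

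\emph{Step 2: split the gradient.} Write $\nabla f(x^k)=g^k+(\nabla f(x^k)-g^k)$ and use $g^k=(x^k-y^{k+1})/\alpha$. For the $g^k$ part,
\[
2\langle g^k,y^{k+1}-x\rangle=-\tfrac{2}{\alpha}\|y^{k+1}-x^k\|_2^2+\tfrac{2}{\alpha}\langle x^k-x,x^k-y^{k+1}\rangle .
\]
For the error part, split $y^{k+1}-x=(y^{k+1}-x^k)+(x^k-x)$. Young's inequality bounds
$2\langle \nabla f(x^k)-g^k,y^{k+1}-x^k\rangle\le \alpha\|\nabla f(x^k)-g^k\|_2^2+\frac1\alpha\|y^{k+1}-x^k\|_2^2$.
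Combining the two $\|y^{k+1}-x^k\|_2^2$ terms leaves exactly $-\frac1\alpha\|y^{k+1}-x^k\|_2^2$, which matches \eqref{lemma-acc-3-1}.

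\emph{Step 3: kill the cross term.} The only remaining term is $2\langle \nabla f(x^k)-g^k,x^k-x\rangle$. Since $x^k-x$ is measurable with respect to the conditioning, it suffices to show $\mathbb{E}_k[g^k]=\nabla f(x^k)$.

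I expect this step to be the main obstacle, because Algorithm \ref{compression-RABK-EF} modifies the update of $e^k$ and $\bar r^k$ (it uses $y^{k-1}$ and the factor $\eta$). I would check by induction that the invariant $e^k=\sum_i(\bar r^k)_i a_i$ still holds: the $\eta$-scaled increments to $e^k$ match the $\eta$-scaled changes to $(\bar r^k)_i$ for $i\in\mathcal{I}_k$. With this invariant, \eqref{expectation-s} gives
\[
\mathbb{E}_k[g^k]=e^k+\sum_i\big(\langle a_i,x^k\rangle-b_i-(\bar r^k)_i\big)a_i=A^\top(Ax^k-b).
\]
Equivalently, in the JacSketch form \eqref{Acc-JacSketch}, $\mathbb{E}_k[g^k]=J^kp+(G(x^k)-J^k)p=G(x^k)p$.

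Taking $\mathbb{E}_k$ of the inequality assembled in Steps 1--3 then yields \eqref{lemma-acc-3-1}.
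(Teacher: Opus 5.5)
Your proposal is correct and follows essentially the same route as the paper. Both arguments expand $\|A\cdot-b\|_2^2$ exactly around $x^k$, substitute $g^k=(x^k-y^{k+1})/\alpha$, apply Young's inequality to the $\langle \nabla f(x^k)-g^k,\,y^{k+1}-x^k\rangle$ term, and use $\mathbb{E}_k[g^k]=A^\top(Ax^k-b)$ to remove the $x^k-x$ cross term. Your explicit induction check that $e^k=\sum_i(\bar r^k)_i a_i$ survives the $\eta$-damped updates is a worthwhile detail that the paper leaves implicit.
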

\begin{proof} 
	For any $x$ and $y$, it holds that
	\begin{equation}\label{obj-function-equality}
		\|Ay-b\|^2_2- \|Ax-b\|^2_2=2\langle y-x,A^\top (Ax-b)\rangle+ \| A(y-x)\|^2_2.
	\end{equation}
	Hence, we can get 
	$$
	\begin{aligned}
		\|Ay^{k+1}-b\|^2_2=&\|Ax^k-b\|^2_2+2\langle y^{k+1}-x^k,A^\top (Ax^k-b)\rangle+ \| A(y^{k+1}-x^k)\|^2_2\\
		=&\|Ax-b\|^2_2- \|A(x^k-x)\|^2_2+2\langle x^k-x,A^\top (Ax^k-b)\rangle\\
		&+ 2\langle y^{k+1}-x^k,A^\top (Ax^k-b) \rangle  + \| A(y^{k+1}-x^k)\|^2_2\\
		=&\|Ax-b\|^2_2- \|A(x^k-x)\|^2_2+2\langle x^k-x,A^\top (Ax^k-b)-g^k+g^k\rangle\\
		&+ 2\langle y^{k+1}-x^k,A^\top (Ax^k-b) -g^k+g^k\rangle  + \| A(y^{k+1}-x^k)\|^2_2.
	\end{aligned}
	$$
	Note that  $y^{k+1}=x^k-\alpha g^k$, we have
	$$
	\begin{aligned}
		\mathbb{E}_k[\|Ay^{k+1}-b\|^2_2]
		=&\|Ax-b\|^2_2-\|A(x^k-x)\|^2_2+\mathbb{E}_k\left[\frac{2}{\alpha}\langle x^k-x,x^k-y^{k+1}\rangle-\frac{2}{\alpha}\|y^{k+1}-x^k\|^2_2\right.\\
		&+\left. 2\langle y^{k+1}-x^k+x^k-x,A^\top (Ax^k-b)-g^k\rangle  +\| A(y^{k+1}-x^k)\|^2_2\right]\\
		\leq&\|Ax-b\|^2_2- \|A(x^k-x)\|^2_2+\mathbb{E}_k\left[\frac{2}{\alpha}\langle x^k-x,x^k-y^{k+1}\rangle\right.\\
		&+\left.\alpha\|A^\top (Ax^k-b)-g^k\|^2_2 -\frac{1}{\alpha}\|y^{k+1}-x^k\|^2_2+ \| A(y^{k+1}-x^k)\|^2_2\right],
	\end{aligned}
	$$
	where the last inequality follows from 
	$$\mathbb{E}_k\left[\langle x^k-x,A^\top (Ax^k-b)-g^k\rangle \right]=\langle x^k-x,A^\top (Ax^k-b)-\mathbb{E}_k[g^k]\rangle =0$$ 
	and  the Young's inequality
	$$2\langle y^{k+1}-x^k,A^\top (Ax^k-b)-g^k\rangle\leq \alpha\|A^\top (Ax^k-b)-g^k\|^2_2+\frac{1}{\alpha}\|y^{k+1}-x^k\|^2_2.$$ 
	This completes the proof of this lemma.
\end{proof}

\begin{lemma}
	\label{lemma-acc-1}
	Suppose that the parameters satisfy $\alpha\leq\frac{1}{2\|A\|^2_2},\theta_1\leq\frac{1}{4}, \theta_2=\frac{1}{2}$, $\gamma=\frac{\alpha}{2(\theta_1+\alpha\sigma_{\min}^2(A))}$, and $\beta=1-\gamma\sigma_{\min}^2(A)$, then the iteration sequence generated by Algorithm \ref{compression-RABK-EF} satisfies
	$$
	\begin{aligned}
		\mathbb{E}_k[\mathcal{Z}^{k+1}]+\frac{ \gamma\beta }{\theta_1}\mathbb{E}_k[\mathcal{Y}^{k+1}]
		\leq&\beta\mathcal{Z}^{k}+(1-\theta_1-\theta_2)\frac{\gamma\beta}{\theta_1} \mathcal{Y}^{k}+\frac{\gamma\beta \theta_2}{\theta_1}\mathcal{Y}^{k-1} 
		-\frac{ \gamma \beta}{4\theta_1}  \|A(x^k-y^k)\|^2_2
		\\
		&-\frac{ \gamma \beta }{2\theta_1}  \|A(x^k-y^{k-1})\|^2_2+\frac{\alpha \gamma }{\theta_1}\mathbb{E}_k[\|A^\top (Ax^k-b)-g^k\|^2_2].
	\end{aligned}
	$$
\end{lemma}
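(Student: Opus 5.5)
The plan is the standard loopless-Katyusha potential argument, carried out directly in the original variables. Write $x^*=A^\dagger b$ and note that the $z$-update can be rewritten, using $y^{k+1}-x^k=-\alpha g^k$, as
$$
z^{k+1}=w^k-\gamma g^k,\qquad w^k:=\beta z^k+(1-\beta)x^k .
$$
Expanding the square gives
$$
\mathcal{Z}^{k+1}=\|w^k-x^*\|_2^2-2\gamma\langle g^k,w^k-x^*\rangle+\gamma^2\|g^k\|_2^2 .
$$
Convexity of $\|\cdot\|_2^2$ bounds the first term by $\beta\mathcal{Z}^k+(1-\beta)\|x^k-x^*\|_2^2$. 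Since $1-\beta=\gamma\sigma_{\min}^2(A)$, and $x^k-x^*\in\operatorname{Range}(A^\top)$ by Lemma~\ref{lemma-in-range}, Lemma~\ref{error-bound-lemma} turns this into $\beta\mathcal{Z}^k+\gamma\|A(x^k-x^*)\|_2^2$.

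For the cross term, eliminate $z^k$ via Step~1:
$$
z^k=\frac{1}{\theta_1}\Bigl(x^k-\theta_2y^{k-1}-(1-\theta_1-\theta_2)y^k\Bigr).
$$
This expresses $w^k-x^*$ as a combination of $x^k-x^*$, $x^k-y^k$ and $x^k-y^{k-1}$ with coefficients $\beta$ (times $1/\theta_1$ for the latter two, up to $(1-\beta)$ corrections that are absorbed together with the $x^*$-term). Each inner product $\langle g^k,x^k-v\rangle=\frac1\alpha\langle x^k-y^{k+1},x^k-v\rangle$ is then exactly the quantity appearing in Lemma~\ref{lemma-acc-3}.

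Next, apply Lemma~\ref{lemma-acc-3} three times, with $x=x^*$, $x=y^k$ and $x=y^{k-1}$, and weights $\theta_1$, $1-\theta_1-\theta_2$ and $\theta_2$ (which sum to one). Multiply the resulting convex combination by $\gamma\beta/\theta_1$.
\begin{itemize}
\item The left-hand side produces $\frac{\gamma\beta}{\theta_1}\mathbb{E}_k[\mathcal{Y}^{k+1}]$ after subtracting $\|Ax^*-b\|_2^2$.
\item The $\|Ax-b\|_2^2$ terms give $(1-\theta_1-\theta_2)\frac{\gamma\beta}{\theta_1}\mathcal{Y}^k+\frac{\gamma\beta\theta_2}{\theta_1}\mathcal{Y}^{k-1}$.
\item The terms $-\|A(x^k-x)\|_2^2$ give $-\frac{\gamma\beta(1-\theta_1-\theta_2)}{\theta_1}\|A(x^k-y^k)\|_2^2$, and since $1-\theta_1-\theta_2\ge\frac14$ this is at most $-\frac{\gamma\beta}{4\theta_1}\|A(x^k-y^k)\|_2^2$.
\item They also give $-\frac{\gamma\beta}{2\theta_1}\|A(x^k-y^{k-1})\|_2^2$, together with $-\gamma\beta\|A(x^k-x^*)\|_2^2$.
\item The variance terms give exactly $\frac{\alpha\gamma}{\theta_1}\mathbb{E}_k\|A^\top(Ax^k-b)-g^k\|_2^2$ (using $\beta\le1$).
\end{itemize}
The inner-product terms $\frac{2}{\alpha}\langle x^k-x,x^k-y^{k+1}\rangle$ are designed to cancel against $-2\gamma\langle g^k,w^k-x^*\rangle$.

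It remains to dispose of the quadratic terms. By $\alpha\le\frac{1}{2\|A\|_2^2}$,
$$
\|A(y^{k+1}-x^k)\|_2^2-\tfrac1\alpha\|y^{k+1}-x^k\|_2^2\le-\tfrac{1}{2\alpha}\|y^{k+1}-x^k\|_2^2 .
$$
Hence the term $\gamma^2\|g^k\|_2^2=\frac{\gamma^2}{\alpha^2}\|y^{k+1}-x^k\|_2^2$ is absorbed provided $\frac{\gamma}{\alpha}\le\frac{\beta}{2\theta_1}$. With the prescribed $\gamma$, this reduces to
$$
\frac{\theta_1}{\theta_1+\alpha\sigma_{\min}^2(A)}\le\beta=1-\frac{\alpha\sigma_{\min}^2(A)}{2(\theta_1+\alpha\sigma_{\min}^2(A))},
$$
which holds.

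The main obstacle I expect is the exact bookkeeping in the cross term. The linear terms must cancel identically, and the leftover $\|A(x^k-x^*)\|_2^2$ contributions must have the right sign: $+\gamma$ from the strong-convexity step against $-\gamma\beta$ from the weighted Lemma~\ref{lemma-acc-3}, plus the $(1-\beta)x^k$ component of $w^k$. I would first try to pair the $(1-\beta)x^k$ part of $w^k$ with the $x=x^*$ instance of Lemma~\ref{lemma-acc-3}, so that the $x^*$-weight becomes $\gamma$ rather than $\gamma\beta$ and the two $\|A(x^k-x^*)\|_2^2$ terms cancel exactly. Failing that, I would absorb the discrepancy $\gamma(1-\beta)\|A(x^k-x^*)\|_2^2$ using the nonnegativity of $\mathcal{Y}$-type gaps together with the slack in $\theta_1\le\frac14$.
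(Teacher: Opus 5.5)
Your plan is correct and is essentially the paper's proof. The paper expands $\|z^{k+1}-x^*\|_2^2$ the same way and applies Lemma~\ref{lemma-acc-3} with weights $\gamma$ (for $x=x^*$), $\frac{\gamma\beta(1-\theta_1-\theta_2)}{\theta_1}$ and $\frac{\gamma\beta\theta_2}{\theta_1}$, which is exactly your ``first try''. With these weights the $\|A(x^k-x^*)\|_2^2$ terms cancel, and the larger total weight $\frac{\gamma\beta+\gamma\theta_1(1-\beta)}{\theta_1}$ is handled by $\mathcal{Y}^{k+1}\ge 0$ and by $\beta+\theta_1(1-\beta)\le 1$ for the variance term; your condition $\gamma/\alpha\le\beta/(2\theta_1)$ still suffices for the quadratic terms.

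Your fallback is not needed. Even with your original weights, the uncancelled term $-2\gamma(1-\beta)\langle g^k,x^k-x^*\rangle$ has conditional expectation $-2\gamma(1-\beta)\|A(x^k-x^*)\|_2^2$. The net coefficient of $\|A(x^k-x^*)\|_2^2$ is therefore $-\gamma(1-\beta)\le 0$, not a positive discrepancy that must be absorbed.
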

\begin{proof} 
	We have
	$$
	\begin{aligned}
		\|z^{k+1}-x^*\|^2_2 
		=&  \left\|\beta z^{k}+(1-\beta)x^k+\frac{\gamma}{\alpha} (y^{k+1}-x^k)-x^*  \right\|^2_2 \\
		=&\left\| \beta(z^k-x^*)+(1-\beta)(x^k-x^*) \right\|^2_2+\frac{\gamma^2}{\alpha^2} \|y^{k+1}-x^k\|^2_2 \\
		&+  \frac{2\gamma}{\alpha}\langle y^{k+1}-x^k,\beta(z^k-x^k)+x^k-x^*\rangle \\
		=& \beta \|z^{k}-x^*\|^2_2 +(1-\beta)\|x^k-x^*\|^2_2-\beta(1-\beta)\|x^k-z^k\|^2_2\\
		&+\frac{\gamma^2}{\alpha^2}  \|y^{k+1}-x^k\|^2_2  -\frac{2\gamma}{\alpha}\langle x^k-y^{k+1},x^k-x^*\rangle +\frac{2\gamma\beta}{\alpha}\langle y^{k+1}-x^k,z^k-x^k\rangle  
		\\
		=&\beta \|z^{k}-x^*\|^2_2 +(1-\beta)\|x^k-x^*\|^2_2 -\beta(1-\beta)\|x^k-z^k\|^2_2+\frac{\gamma^2}{\alpha^2} \|y^{k+1}-x^k\|^2_2 \\
		&-\frac{2\gamma}{\alpha}\langle x^k-y^{k+1},x^k-x^*\rangle +\frac{2\gamma\beta\theta_2}{\alpha\theta_1}\langle x^k-y^{k+1},y^{k-1}-x^k\rangle  \\
		&+ \frac{2\gamma\beta(1-\theta_1-\theta_2)}{\alpha\theta_1}\langle x^k-y^{k+1},y^{k}-x^k\rangle, 
	\end{aligned}
	$$
	where the last equality follows from $x^k=\theta_1 z^k+\theta_2 y^{k-1}+(1-\theta_1-\theta_2)y^k$. Next, we will cancel the inner products in the above equation using Lemma \ref{lemma-acc-3}. By plugging $\gamma$ times \eqref{lemma-acc-3-1} (where $x=x^*$),  $\frac{\gamma\beta(1-\theta_1-\theta_2)}{\theta_1}$ times \eqref{lemma-acc-3-1} (where $x=y^{k}$), and $\frac{\gamma\beta\theta_2}{\theta_1}$ times \eqref{lemma-acc-3-1} (where $x=y^{k-1}$) into 
	the above equation, we have
	%
	%
	$$
	\begin{aligned}
		&\mathbb{E}_k[\|z^{k+1}-x^*\|^2_2]
		\\
		\leq&\beta\|z^{k}-x^*\|^2_2+(1-\beta)\|x^k-x^*\|^2_2-\gamma\|A(x^k-x^*)\|^2_2-\beta(1-\beta)\|x^k-z^k\|^2_2\\
		&+\frac{\gamma\beta(1-\theta_1-\theta_2)}{\theta_1} \mathcal{Y}^{k} 
		+\frac{\gamma\beta \theta_2}{\theta_1}\mathcal{Y}^{k-1} 
		-\frac{ \gamma\beta+ \gamma\theta_1(1-\beta)}{\theta_1}\mathbb{E}_k[\mathcal{Y}^{k+1}] 
		\\
		&+\frac{\alpha\gamma\beta+\alpha\gamma\theta_1(1-\beta)}{\theta_1} \mathbb{E}_k[\|A^\top (Ax^k-b)-g^k\|^2_2]
		-\frac{ \gamma\beta \theta_2 }{\theta_1}  \|A(x^k-y^{k-1})\|^2_2 
		\\
		&-\frac{ \gamma\beta(1-\theta_1-\theta_2)}{\theta_1} \|A(x^k-y^k)\|^2_2
		+
		\left(\frac{\gamma^2}{\alpha^2}-\frac{\gamma\beta+\gamma\theta_1(1-\beta) }{\theta_1\alpha}  \right) \mathbb{E}_k[\|y^{k+1}-x^k\|^2_2]
		\\&+\frac{\gamma\beta+\gamma\theta_1(1-\beta) }{\theta_1} \mathbb{E}_k[\|A(y^{k+1}-x^{k})\|^2_2].
	\end{aligned}
	$$
	From Lemmas \ref{error-bound-lemma} and \ref{lemma-in-range}, we have $\|A(x^k-x^*)\|^2_2\geq\sigma_{\min}^2(A)\|x^k-x^*\|^2_2$. Using $\gamma=\frac{\alpha}{2( \theta_1+\alpha\sigma_{\min}^2(A)) }$, $\beta=1-\gamma\sigma_{\min}^2(A) $,  $\alpha\leq \frac{1}{2\|A\|^2_2}$, $\theta_1\leq 1/4$, and $\theta_2=1/2$, we have
	$$
	\begin{aligned}
		\mathbb{E}_k[\mathcal{Z}^{k+1}]+\frac{ \gamma\beta }{\theta_1}\mathbb{E}_k[\mathcal{Y}^{k+1}]
		\leq&\beta\mathcal{Z}^{k}+(1-\theta_1-\theta_2)\frac{\gamma\beta}{\theta_1} \mathcal{Y}^{k}+\frac{\gamma\beta \theta_2}{\theta_1}\mathcal{Y}^{k-1} 
		-\frac{ \gamma \beta}{4\theta_1}  \|A(x^k-y^k)\|^2_2
		\\
		&-\frac{ \gamma\beta  }{2\theta_1}  \|A(x^k-y^{k-1})\|^2_2+\frac{\alpha \gamma }{\theta_1}\mathbb{E}_k[\|A^\top (Ax^k-b)-g^k\|^2_2] 
	\end{aligned}
	$$
	as desired.
\end{proof}
%

For convenience, we adopt the notations $ {J}^k$, $ {G}(y^{k-1 })$, and $ {G}(x^k)$ from \eqref{Acc-JacSketch} in the proof of Theorem  \ref{thm-acc}.
\begin{lemma}
	The iteration sequence generated by Algorithm \ref{compression-RABK-EF} satisfies
	\begin{equation}
		\label{lemma-equ-0828-2}
		\mathbb{E}_k\left[ \left\|g^k-A^\top(Ax^k-b)\right\|^2_2\right]
		\leq2\left\| {J}^k- {G}(y^{k-1 })\right\|^2_{Q-pp^\top}+2\left\| {G}(y^{k-1 })- {G}(x^{k })\right\|^2_{Q-pp^\top}
	\end{equation}
	and
	\begin{equation}
		\label{lemma-equ-0828-1}
		\begin{aligned}
			\mathbb{E}_k&\left[\left\| {J}^{k+1}- {G}(y^{k })\right\|^2_{F}\right]
			\leq \left(1-\frac{\eta\lambda_{\min} (P)}{2}\right)  \left\|\left(  {J}^{k}- {G}(y^{k-1})\right)  \right\|^2_{F}\\
			&+2\left(1+\frac{2}{\eta\lambda_{\min} (P)}\right)\left( \left\|  {G}(y^{k})- {G}(x^{k})  \right\|^2_{F}
			+ \left\|  {G}(x^{k})- {G}(y^{k-1})  \right\|^2_{F}\right).
		\end{aligned}
	\end{equation}
\end{lemma}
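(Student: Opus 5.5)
We prove the two estimates separately. Both follow the pattern of Lemmas \ref{lemma-1} and \ref{lemma-2}, now centered at the lagged point $y^{k-1}$ instead of $x^*$. Throughout we use the JacSketch form \eqref{Acc-JacSketch}. Its validity for Algorithm \ref{compression-RABK-EF} follows by the same induction as in Proposition \ref{THM-EQ}: one checks ${J}^k=A^\top \operatorname{diag}(\bar r^k)P^{-1}$ and ${J}^kp=e^k$, using the $\eta$-relaxed updates of $e^k$ and $\bar r^k$.

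\emph{Bound \eqref{lemma-equ-0828-2}.} Since ${G}(x^k)p=A^\top(Ax^k-b)$, we have
$$g^k-A^\top(Ax^k-b)=({G}(x^k)-{J}^k)\Big(\sum_{i\in\mathcal{I}_k}e_i-p\Big).$$
Write ${G}(x^k)-{J}^k=({G}(x^k)-{G}(y^{k-1}))+({G}(y^{k-1})-{J}^k)$ and apply $\|u+v\|^2\le 2\|u\|^2+2\|v\|^2$. For any fixed matrix $M$, \eqref{expectation-is} gives
$$\mathbb{E}_k\Big\|M\Big(\sum_{i\in\mathcal{I}_k}e_i-p\Big)\Big\|_2^2=\operatorname{Tr}\big(M(Q-pp^\top)M^\top\big)=\|M\|^2_{Q-pp^\top}.$$
This yields \eqref{lemma-equ-0828-2} directly. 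It is an exact covariance computation, with no need to pass through the second moment as in Lemma \ref{lemma-2}.

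\emph{Bound \eqref{lemma-equ-0828-1}.} Let $S_k=\sum_{i\in\mathcal{I}_k}e_ie_i^\top$. The update reads ${J}^{k+1}={J}^k(I-\eta S_k)+\eta{G}(y^{k-1})S_k$, so
$${J}^{k+1}-{G}(y^{k-1})=({J}^k-{G}(y^{k-1}))(I-\eta S_k).$$
Since $S_k$ is a diagonal $0/1$ projector, $(I-\eta S_k)^2=I-(2\eta-\eta^2)S_k$. Taking expectations,
$$\mathbb{E}_k\|{J}^{k+1}-{G}(y^{k-1})\|_F^2=\|{J}^k-{G}(y^{k-1})\|^2_{I-(2\eta-\eta^2)P}\le (1-\eta\lambda_{\min}(P))\|{J}^k-{G}(y^{k-1})\|_F^2,$$
where the inequality uses $2\eta-\eta^2\ge\eta$ for $\eta\le1$. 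This holds here because $\eta=1/(\lambda_{\min}(P)+1)\le 1$.

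Next we change the center from $y^{k-1}$ to $y^k$, which is the key step. We use $\|u+v\|^2\le(1+\epsilon)\|u\|^2+(1+1/\epsilon)\|v\|^2$ with $\epsilon=\eta\lambda_{\min}(P)/2$, taking $u={J}^{k+1}-{G}(y^{k-1})$ and $v={G}(y^{k-1})-{G}(y^k)$. Since $y^k$ and $y^{k-1}$ are $\mathcal{F}_k$-measurable, the term in $v$ is deterministic given the past. The contraction factor becomes $(1+\epsilon)(1-2\epsilon)\le 1-\epsilon$. Finally we split $v$ through ${G}(x^k)$, as
$$\|{G}(y^{k-1})-{G}(y^k)\|_F^2\le 2\|{G}(y^k)-{G}(x^k)\|_F^2+2\|{G}(x^k)-{G}(y^{k-1})\|_F^2.$$
Together with $1+1/\epsilon=1+\frac{2}{\eta\lambda_{\min}(P)}$, this gives exactly the stated constant $2\big(1+\frac{2}{\eta\lambda_{\min}(P)}\big)$.

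The main obstacle is bookkeeping in this last step. The Young parameter must be chosen so that the product of $(1+\epsilon)$ with the contraction from the previous step yields $1-\eta\lambda_{\min}(P)/2$. The only inputs are $\eta\le1$ and $\lambda_{\min}(P)\le1$, which give $\epsilon\le 1/2$ and keep all factors nonnegative.
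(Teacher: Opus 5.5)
Your proposal is correct and follows the paper's proof essentially step for step. For the first bound you use the covariance identity $\mathbb{E}_k\|M(\sum_{i\in\mathcal{I}_k}e_i-p)\|_2^2=\|M\|^2_{Q-pp^\top}$ plus a two-term split. For the second you use the factorization $({J}^k-{G}(y^{k-1}))(I-\eta S_k)$, the identity $(I-\eta S_k)^2=I-(2\eta-\eta^2)S_k$, Young's inequality with parameter $\eta\lambda_{\min}(P)/2$, and a split through ${G}(x^k)$. The only differences are cosmetic reorderings: you split before taking the expectation in the first bound, and you simplify via $2\eta-\eta^2\ge\eta$ before multiplying by $1+\eta\lambda_{\min}(P)/2$.
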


\begin{proof}
	From \eqref{Acc-JacSketch}, we know that $g^k= {J}^k p+( {G}(x^{k })- {J}^k) \sum_{i\in\mathcal{I}_k}e_{i}$. Since $A^\top(Ax^k-b)= {G}(x^{k }) p$, we can get
	$$
	\begin{aligned}
		\mathbb{E}_k\left[ \left\|g^k-A^\top(Ax^k-b)\right\|^2_2\right]
		=&\mathbb{E}_k\left[  \left\| {J}^k p+( {G}(x^{k })- {J}^k) \sum_{i\in\mathcal{I}_k}e_{i} - {G}(x^{k }) p\right\|^2_2\right]\\
		=& \left\| {J}^k- {G}(x^{k })\right\|^2_{Q-pp^\top}\\
		\leq&2\left\| {J}^k- {G}(y^{k-1 })\right\|^2_{Q-pp^\top}+2\left\| {G}(y^{k-1 })- {G}(x^{k })\right\|^2_{Q-pp^\top},
	\end{aligned}
	$$
	where the last inequality follows from the Cauchy-Schwarz inequality. This proves \eqref{lemma-equ-0828-2}.
	
	From Step 3 in Algorithm \ref{compression-RABK-EF}, we know that 
	$
	 {J}^{k+1}= {J}^{k}+\eta( {G}(y^{k-1})- {J}^{k}) \sum_{i\in\mathcal{I}_k}e_{i} e_{i}^\top
	$.
	Hence, we have
	$$
	\begin{aligned}
		\mathbb{E}_k\left[\left\| {J}^{k+1}- {G}(y^{k })\right\|^2_{F}\right]
		=& \mathbb{E}_k\left[\left\|\left(  {J}^{k}- {G}(y^{k-1})\right) \left(I-\eta\sum_{i\in\mathcal{I}_k}e_{i} e_{i}^\top\right)-\left( {G}(y^{k})- {G}(y^{k-1})\right) \right\|^2_{F}\right]\\
		\leq& \left(1+\frac{\eta\lambda_{\min}(P)}{2}\right)\mathbb{E}_k\left[  \left\|\left(  {J}^{k}- {G}(y^{k-1})\right) \left(I-\eta\sum_{i\in\mathcal{I}_k}e_{i} e_{i}^\top\right) \right\|^2_{F}\right]\\
		&+\left(1+\frac{2}{\eta\lambda_{\min}(P)}\right) \left\|  {G}(y^{k})- {G}(y^{k-1})  \right\|^2_{F},
	\end{aligned}
	$$
	where the last inequality due to the Young's inequality. Note that
	$$
	\begin{aligned}
		&\mathbb{E}_k\left[  \left\|\left(  {J}^{k}- {G}(y^{k-1})\right) \left(I-\eta\sum_{i\in\mathcal{I}_k}e_{i} e_{i}^\top\right) \right\|^2_{F}\right]\\
		=&\mathbb{E}_k\left[\operatorname{Tr}\left( \left(  {J}^{k}- {G}(y^{k-1})\right) \left(I-(2\eta-\eta^2)\sum_{i\in\mathcal{I}_k}e_{i} e_{i}^\top\right)   \left(  {J}^{k}- {G}(y^{k-1})\right)^\top \right)\right]\\
		=&\operatorname{Tr}\left( \left(  {J}^{k}- {G}(y^{k-1})\right) \left(I-(2\eta-\eta^2)P\right)\left(  {J}^{k}- {G}(y^{k-1})\right)^\top  \right)\\
		\leq&\left(1-(2\eta-\eta^2)\lambda_{\min} (P)\right)  \left\|\left(  {J}^{k}- {G}(y^{k-1})\right)  \right\|^2_{F},
	\end{aligned}
	$$
	which together with $\left(1+\frac{\eta\lambda_{\min}(P)}{2}\right)\left(1-(2\eta-\eta^2)\lambda_{\min} (P)\right)\leq \left(1-\frac{\eta\lambda_{\min}(P)}{2}\right)$ as $\eta<1$ and
	$$\left\|  {G}(y^{k})- {G}(y^{k-1})  \right\|^2_{F}\leq 2\left\|  {G}(y^{k})- {G}(x^{k})  \right\|^2_{F}
	+ 2\left\|  {G}(x^{k})- {G}(y^{k-1})  \right\|^2_{F}$$ implies that \eqref{lemma-equ-0828-1} holds.
\end{proof}

\begin{proof}[Proof of Theorem \ref{thm-acc}]

	%
	%
	%
	%
	By Lemma \ref{lemma-acc-1}, we know that 
	$$
	\begin{aligned}
		\mathbb{E}_k\left[\Psi^{k+1}\right]:=&\mathbb{E}_k\left[ \mathcal{Z}^{k+1} +\frac{ \gamma\beta }{\theta_1} \mathcal{Y}^{k+1}  +\frac{\gamma\beta \theta_2(1+\theta_1)}{\theta_1} \mathcal{Y}^{k}+\frac{8\alpha \gamma\lambda_{\max}(Q-pp^\top ) }{\theta_1 \eta\lambda_{\min}(P)}\left\| {J}^{k+1}- {G}(y^{k  })\right\|^2_{F}\right]\\
		\leq&\beta\mathcal{Z}^{k} +(1-\theta_1-\theta_2)\frac{\gamma\beta}{\theta_1} \mathcal{Y}^k
		+\frac{\alpha \gamma }{\theta_1} \mathbb{E}_k[\|A^\top (Ax^k-b)-g^k\|^2_2] +\frac{\gamma\beta \theta_2}{\theta_1}\mathcal{Y}^{k-1}
		\\&-\frac{ \gamma\beta  }{2\theta_1}   \|A(x^k-y^{k-1})\|^2_2  -\frac{ \gamma\beta }{4\theta_1}  \|A(x^k-y^k)\|^2_2\\
		&+ \frac{\gamma\beta \theta_2(1+\theta_1)}{\theta_1} \mathcal{Y}^{k} +\frac{8\alpha \gamma \lambda_{\max}(Q-pp^\top )}{\theta_1 \eta\lambda_{\min}(P)}\mathbb{E}_k\left[\left\| {J}^{k+1}- {G}(y^{k})\right\|^2_{F}\right] \\
		\leq&\beta\mathcal{Z}^{k} +\left(1-\frac{\theta_1}{2}\right)\frac{\gamma\beta}{\theta_1} \mathcal{Y}^k
		+\left(1-\frac{\theta_1}{2}\right)\frac{\gamma\beta \theta_2(1+\theta_1)}{\theta_1}\mathcal{Y}^{k-1}-\frac{ \gamma \beta }{2\theta_1}  \|A(x^k-y^{k-1})\|^2_2
		\\&+\frac{2\alpha \gamma }{\theta_1}\left\| {J}^k- {G}(y^{k-1 })\right\|^2_{Q-pp^\top}+ \frac{2\alpha \gamma }{\theta_1}\left\| {G}(y^{k-1 })- {G}(x^{k })\right\|^2_{Q-pp^\top} \\
		&-\frac{ \gamma\beta }{4\theta_1} \|A(x^k-y^k)\|^2_2 +\frac{8\alpha \gamma\lambda_{\max}(Q-pp^\top ) }{\theta_1 \eta\lambda_{\min}(P)}\left(1-\frac{\eta\lambda_{\min} (P)}{2}\right)\left\|\left(  {J}^{k}- {G}(y^{k-1})\right)  \right\|^2_{F}\\
		& +\frac{16\alpha \gamma\lambda_{\max}(Q-pp^\top ) }{\theta_1\eta\lambda_{\min}(P)}\left(1+\frac{2}{\eta\lambda_{\min} (P)}\right)  \left\|  {G}(y^{k})- {G}(x^{k})  \right\|^2_{F}  \\
		&+\frac{16\alpha \gamma\lambda_{\max}(Q-pp^\top ) }{\theta_1\eta\lambda_{\min}(P)}\left(1+\frac{2}{\eta\lambda_{\min} (P)}\right) \left\|  {G}(x^{k})- {G}(y^{k-1})  \right\|^2_{F},
	\end{aligned}
	$$
	where the last inequality follows from \eqref{lemma-equ-0828-2} and \eqref{lemma-equ-0828-1}. 
	Since 
	$$
	\begin{aligned}
		\left\|  {G}(y^{k})- {G}(x^{k})  \right\|^2_{F}
		 \leq \lambda_{\max}(DP^{-1})  \|A(y^k-x^k)\|^2_2.
	\end{aligned}
	$$
	Let  $\rho:=\lambda_{\max}(DP^{-1}) $.
	Similarly, we have $\left\| {G}(x^{k}) -  {G}(y^{k-1})\right\|^2_{F}\leq\rho\|A(x^k-y^{k-1})\|^2_2$. Then we can get
	$$
	\begin{aligned}
		\mathbb{E}_k\left[\Psi^{k+1}\right]
		\leq&\beta\mathcal{Z}^{k} +\left(1-\frac{\theta_1}{2}\right)\frac{\gamma\beta}{\theta_1} \mathcal{Y}^k
		+\left(1-\frac{\theta_1}{2}\right)\frac{\gamma\beta \theta_2(1+\theta_1)}{\theta_1}\mathcal{Y}^{k-1}\\
		&+\left(1-\frac{\eta\lambda_{\min} (P)}{4}\right) \frac{8\alpha \gamma\lambda_{\max}(Q-pp^\top) }{\theta_1\eta\lambda_{\min}(P)}\left\| {J}^k- {G}(y^{k-1 })\right\|^2_{F}
		\\&-\frac{ \gamma  }{\theta_1}\left( \frac{\beta}{2}-2\alpha\rho\lambda_{\max}(Q-pp^\top) \left( 1+\frac{8}{\eta\lambda_{\min}(P)} + \frac{16}{\eta^2\lambda^2_{\min} (P)}\right)\right) \|A(x^k-y^{k-1})\|^2_2   \\
		&-\frac{ \gamma }{\theta_1} \left(\frac{\beta}{4} -4\alpha \rho\lambda_{\max}(Q-pp^\top)  \left(\frac{4}{\eta\lambda_{\min}(P)}+\frac{8}{\eta^2\lambda^2_{\min} (P)}\right) \right)\|A(x^k-y^k)\|^2_2.
	\end{aligned}
	$$
	Since $\eta =\frac{ 1}{\lambda_{\min}(P)+1}$, $\beta
	=1-\frac{\alpha\sigma^2_{\min}(A)}{2(\theta_1+\alpha\sigma^2_{\min}(A))}
	\geq\frac12$, and  
	$$
	\alpha=\min\left\{\frac{(1-\eta)^{2}}{128\rho(3-\eta)\lambda_{\max}(Q-pp^\top) },\frac{1}{4\|A\|^2_2}\right\},
	$$
	we know that
	$$
	\frac{\beta}{2}-2\alpha \rho\lambda_{\max}(Q-pp^\top) \left( 1+\frac{8}{\eta\lambda_{\min}(P)} + \frac{16}{\eta^2\lambda^2_{\min} (P)}\right)\geq0
	$$
	and
	$$
	\frac{\beta}{4} -4\alpha\rho\lambda_{\max}(Q-pp^\top) \left(\frac{4}{\eta\lambda_{\min}(P)}+\frac{8}{\eta^2\lambda^2_{\min} (P)}\right) \geq0.
	$$
	Note that $\gamma=\frac{\alpha}{2(\theta_1+\alpha\sigma_{\min}^2(A))}$, $\beta=1-\gamma\sigma_{\min}^2(A)\leq 1-\frac{\alpha\sigma_{\min}^2(A)}{4\theta_1}$ due to $0<\alpha\sigma_{\min}^2(A)\leq \frac{1}{4}$ and $\alpha\sigma_{\min}^2(A)\leq \theta_1$. Thus we have
	$$
	\begin{aligned}
		\mathbb{E}_k\left[\Psi^{k+1}\right]\leq &\beta\mathcal{Z}^{k} +\left(1-\frac{\theta_1}{2}\right)\frac{\gamma\beta}{\theta_1} \mathcal{Y}^k
		+\left(1-\frac{\theta_1}{2}\right)\frac{\gamma\beta \theta_2(1+\theta_1)}{\theta_1}\mathcal{Y}^{k-1}\\
		&+\left(1-\frac{\eta\lambda_{\min} (P)}{4}\right) \frac{8\alpha \gamma\lambda_{\max}(Q-pp^\top) }{\theta_1 \eta\lambda_{\min}(P)}\left\| {J}^k- {G}(y^{k-1 })\right\|^2_{F}
		\\
		\leq &\left(1-\frac{\alpha\sigma_{\min}^2(A)}{4\theta_1}\right)\mathcal{Z}^{k} +\left(1-\frac{\theta_1}{2}\right)\frac{\gamma\beta}{\theta_1} \mathcal{Y}^k
		+\left(1-\frac{\theta_1}{2}\right)\frac{\gamma\beta \theta_2(1+\theta_1)}{\theta_1}\mathcal{Y}^{k-1}\\
		&+\left(1-\frac{\eta\lambda_{\min} (P)}{4}\right) \frac{8\alpha \gamma\lambda_{\max}(Q-pp^\top) }{\theta_1 \eta\lambda_{\min}(P)}\left\| {J}^k- {G}(y^{k-1 })\right\|^2_{F}\\
		\leq& \left(1-\min\left\{ \frac{\eta\lambda_{\min} (P)}{4},\frac{\sqrt{\alpha\sigma_{\min}^2(A)}}{4}  \right\}\right)\Psi^k,
	\end{aligned}
	$$
	where the last inequality follows from that
	$
	\theta_1=\min\left\{\frac{1}{4},\sqrt{\frac{\alpha\sigma_{\min}^2(A)}{2}} \right\}
	$.
	%
	%
\end{proof}

\end{document}